\numberwithin{equation}{section}
\theoremstyle{plain} \newtheorem{Lemma}{Lemma}[section]
\theoremstyle{plain} \newtheorem{Proposition}[Lemma]{Proposition}
\theoremstyle{plain} 
\theoremstyle{plain} \newtheorem{Theorem}[Lemma]{Theorem}
\theoremstyle{plain} 
\theoremstyle{plain} \newtheorem{Corollary}[Lemma]{Corollary}
\theoremstyle{definition} \newtheorem{Construction}[Lemma]{Construction}
\theoremstyle{definition} \newtheorem{definition}[Lemma]{Definition}
\theoremstyle{remark} \newtheorem{remark}[Lemma]{Remark}
\theoremstyle{remark} \newtheorem{Exm}[Lemma]{Example}
\newcommand{\FF}{\mathcal{F}}
\newcommand{\GG}{\mathcal{G}}
\newcommand{\NN}{\mathcal{N}}
\newcommand{\TT}{\mathcal{T}}
\newcommand{\UU}{\mathcal{U}}
\newcommand{\eps}{\varepsilon}
\newcommand{\Rbold}{{\mathbb{R}}}
\newcommand{\Zbold}{{\mathbb{Z}}}
\newcommand{\T}{\mathbb{T}}
\newcommand{\expec}{\mathbb{E}}
\newcommand{\E}{\mathbb{E}}
\newcommand{\prob}{\mathbb{P}}
\newcommand{\tpl}{t_+^n}
\newcommand{\tmi}{t_{-}^n }
\newcommand{\Tr}{\mathrm{tr}}
\newcommand{\path}{\mathbf{p}}
\newcommand{\tts}{\texttt{s}}
\newcommand{\hash}{\#}
\def\ind{{\rm 1\hspace{-0.90ex}1}}
\begin{document}

\title{Spectra of large random trees}

\author{Shankar Bhamidi}
\email{shankar@math.ubc.ca}

\address{Department of Mathematics \\
The University of British Columbia \\
Room 121, 1984 Mathematics Road \\
Vancouver, B.C. \\
CANADA V6T 1Z2}

\author{Steven N. Evans}
\email{evans@stat.Berkeley.EDU}

\author{Arnab Sen}
\email{arnab@stat.Berkeley.EDU}

\address{Department of Statistics \#3860 \\
 University of California at Berkeley \\
367 Evans Hall \\
Berkeley, CA 94720-3860 \\
U.S.A.}

\thanks{SB research supported in part by NSF Grant DMS-0704159, PIMS, and NSERC Canada} 
\thanks{SNE supported in part by NSF grants DMS-0405778 and DMS-0907630}

\date{\today}

\keywords{eigenvalue, random matrix, random graph, 
adjacency matrix, graph Laplacian, interlacing, 
preferential attachment, recursive random tree,
Yule tree, local weak convergence, probability fringe convergence,
maximal matching, Karp-Sipser algorithm, branching process,
isospectral, exchange property}

\subjclass{60C05, 05C80, 90B15}

\begin{abstract}
We analyze the eigenvalues of the adjacency matrices of a 
wide variety of random trees. Using general, broadly applicable arguments
based on the interlacing inequalities
for the eigenvalues of a principal submatrix of a Hermitian matrix
and a suitable notion of local weak convergence for
an ensemble of random trees that we call probability
fringe convergence, 
we show that the empirical spectral distributions for many 
random tree models converge to a deterministic (model dependent) limit
as the number of vertices goes to infinity. 

Moreover, the masses assigned by the
empirical spectral distributions to individual points also converge
in distribution to constants. We conclude
for ensembles such as the linear preferential attachment models,
random recursive trees, and the uniform random trees that the limiting spectral distribution
has a set of atoms that is dense in the real line.  
We obtain lower bounds on the mass assigned to
zero by the empirical spectral measures via the connection
between the number of zero eigenvalues of the adjacency matrix of a tree and the cardinality of a maximal matching on the tree.
In particular, we employ a simplified version of an algorithm due to
Karp and Sipser to construct maximal matchings and understand
their properties.  Moreover, we show that 
the total weight of a weighted matching is asymptotically
equivalent to a constant multiple of the number of vertices
when the edge weights are independent, identically distributed,
non-negative random variables with finite expected value, thereby
significantly extending a result obtained by Aldous and Steele 
in the special case of uniform random trees.

We greatly generalize a celebrated result obtained by Schwenk 
for the uniform random trees by
showing that if any ensemble converges in the probability
fringe sense and a very mild further condition holds, 
then, with probability converging to one,
the spectrum of a realization is shared by at least one other 
(non-isomorphic) tree.

For the the linear preferential attachment
model with parameter $a > -1$, we show that for any fixed $k$
the $k$ largest eigenvalues jointly converge in distribution to
a non-trivial limit when rescaled by $n^{1/2\gamma_a}$, 
where $\gamma_a = a+2$ is the Malthusian rate of growth parameter for 
an associated continuous time branching process. 
\end{abstract}

\maketitle

\newpage
\tableofcontents

\newpage
\section{Introduction}
\label{sec-int}

The study of large random matrices and their eigenvalues is one of the 
primary themes of current research in probability.  It finds applications in such diverse fields as 
number theory, random partitions, free probability and operator algebras, 
high-dimensional statistical analysis, nuclear physics, signal processing, wireless communication, 
quantum percolation,  and the operation of search engines. 
Some recent book length expositions are \cite{Deift00, BleherIts01,
Mehta04, TulinoVerdu04, Guionnet09, Forrester09}.

 The objects of interest in this field are usually random real symmetric 
 or complex Hermitian matrices. For example, one of the most popular models  
 is the {\em Gaussian unitary ensemble (GUE)}, 
 where the matrices are Hermitian,
the entries above the diagonal are independent, identically distributed, standard complex Gaussian random variables,
 and the entries on the diagonal are independent, identically distributed, standard real Gaussian random variables.  Much
is now known about the asymptotic behavior of objects such
as the empirical distribution of the eigenvalues and the
behavior of the maximal eigenvalue. 
 
Here we investigate random matrices with substantially greater structure
and complexity than the GUE, namely the adjacency matrices of random graphs,
although our methods are also applicable to the closely related Laplacian matrices. 
The recent availability of large amounts of data has led to an
explosion in the number of models used to model real-world networks, 
and dynamically
grown models such as various preferential attachment schemes have attracted significant 
interest from the computer science and mathematical physics community. 
It is known (see, for example,
the monographs 
\cite{fan,
MR1829620,
MR1271140,
MR1440854,
MR1324340,
MR926481}) 
that a surprising diversity of features of a graph are determined, at least
in part, by the behavior of the eigenvalues of its adjacency
and Laplacian matrices.

We concentrate on the adjacency matrices of
various ensembles of random trees.  Random trees arise in 
numerous contexts, ranging from the analysis of database and 
search algorithms in computer science to models of phylogenies (that is,
evolutionary family trees) in biology.  Moreover,
many of the preferential attachment schemes for networks
are also largely random models of growing trees
(see, for example, \cite{shanky-pref} for a survey of some of the more popular schemes). 
We note that, although trees are rather simple graphs, the analysis of their
eigenvalues is still rather challenging, and even in the case of
highly symmetric deterministic trees explicit formulae
for spectra have only been found recently  
\cite{
MR2416602,
MR2353161,
MR2278225,
MR2278210,
MR2209241,
MR2209240,
MR2140275}.

We introduce a general technique based on the concept
of {\em probability fringe convergence} for showing that the
{\em spectral distributions} (that is,
empirical distributions of the eigenvalues)
of the adjacency matrices
of an ensemble of random trees converge 
in the topology of weak convergence of probability measures
on the line to a deterministic limit
as the number of vertices goes to infinity, and we show
how this technique applies in several natural examples.

The notion of probability fringe convergence is a type of
{\em local weak convergence} for random graphs that involves the
convergence in distribution of
suitably defined neighborhoods of a vertex picked uniformly
from a random graph  as the size of the graph goes
to infinity.  Surveys of this general methodology are
\cite{aldous-fringe, aldous-obj}.  Such
convergence results for random trees
where the limit is described in terms of a 
continuous-time branching processes  go back to
\cite{jagers-nerman, nerman-jagers}.  The first 
(to our knowledge) use of such techniques in 
various general models  of preferential 
attachment is \cite{rudas}.  
Such notions are further explored in \cite{shanky-pref}. 

The key algebraic element of our proof 
of convergence of the spectral distributions is the set of
interlacing inequalities between the eigenvalues of a Hermitian matrix
and the eigenvalues of one of its
principal sub-matrices.  
The interlacing inequalities allow us to break
a large tree up into a forest of smaller trees
by deleting a small proportion of edges
and conclude that the spectral distribution
of the tree is close to that of the forest which,
in turn, is a convex combination of the spectral distributions
of its component sub-trees.  If the decomposition
into a forest is done appropriately, then the resulting
sub-trees are ``locally defined'' in a sense that
allows  probability fringe convergence to be brought
to bear to show that the spectral distribution of
the forest converges.

We note that interlacing has found other
applications in algebraic graph theory
\cite{MR2106034, MR1673001, MR1344588}.

Another interesting technical aspect of our work is that the 
{\em method of moments}, one of the most commonly used tools 
in random matrix theory, fails for some 
natural ensembles because, 
as we observe in Remark~\ref{rem:moment_div}, expected values of moments of
the spectral distribution go to infinity.

While our method for showing that
the spectral distribution converges is quite
general, it does not provide any sort of characterization
of the limiting distribution.  In Section~\ref{S:comb_example}
we look at an extremely simple random tree that is
obtained by taking the tree consisting of a path of
$n$ points and independently connecting
an edge to each point with equal probability, so that
the resulting tree resembles a comb with some of its
teeth missing.  Our probability fringe convergence methodology
does not apply immediately to this ensemble of random trees, but
a straightforward modification of it does.  We investigate
the asymptotic moments of the spectral distribution for
this ensemble and show that even in this simple case
closed form expressions appear to be rather elusive, indicating that we should
perhaps not expect simple characterizations of the
limiting spectral distribution for more complex models.

Quite closely related to our results for the 
spectral distribution is the recent work \cite{bordenave}, 
where similar local weak convergence techniques are 
combined with Stieltjes transform methods to 
prove various limiting results for families of random graphs.

We extend our results on the convergence of the spectral
distribution in two different directions.  

First, we show for any $\gamma \in \mathbb{R}$ that
the proportion of eigenvalues that have the value $\gamma$
converges to a constant under the assumption of probability
fringe convergence.  Moreover, we give a simple sufficient
condition for the limit to be positive and apply this
condition to show for several models that the limiting
spectral distribution has a set of atoms that is
dense in $\mathbb{R}$.
We pay particular attention to the proportion of 
zero eigenvalues, a quantity
of importance in areas such as quantum percolation
\cite{bauer-kernel, bauer-goli-2001}.
It is possible to obtain much more exact information
on the limiting proportion because of the connection between
the number of zero eigenvalues of the adjacency matrix
of a tree and the cardinality
of a maximal matching. In particular,
we use a simplified version of the
Karp-Sipser algorithm \cite{karp-sipser} 
to construct maximal matchings. Incidentally,  the Karp-Sipser algorithm has been also used in a recent work \cite{bordenave09} to study the limiting proportion of zero eigenvalues of random sparse graphs. We also use our methods
to obtain the asymptotic behavior of the total weight
of a maximal weighted matching when the edge weights
are given by independent, identically distributed, non-negative
random variables.

Second, we obtain results on the joint convergence in distribution
of the suitably normalized $k$ largest eigenvalues
for the preferential attachment tree. 
These results extend and sharpen those in 
\cite{vanvu-1, vanvu-2, MR2166274, MR2080798},
where it was shown that the $k$ largest eigenvalues
are asymptotically equivalent to the square roots of the
$k$ largest out-degrees.
The weak convergence of the suitably rescaled maximum out-degree 
was obtained in \cite{mori-max} using martingale methods. 
However it is not  clear how to extend this technique
to describe the asymptotics for the $k$ largest out-degrees
for $k\ge2$.  We prove our more general results 
using an approach that is essentially completely different.

\section{Some representative random tree models} 
\label{sec:tree-models}

An enormous number of random tree models
have been developed by computer scientists working
on the analysis of algorithms and the 
mathematical modeling of real world networks: 
see \cite{aldous-fringe, shanky-pref} for a description of 
some of the more popular models. 
Although our methods apply quite generally, it will
be useful to have the following models in mind when
it comes to checking how the hypotheses of our 
results may be verified in particular
instances.

\bigskip\noindent
{\bf Random recursive tree:} This is the simplest model of constructing a rooted tree sequentially via the addition of a new vertex at each stage. 
Start with a single vertex (the root) at time $1$.  Label the vertex added at stage $n$ by $n$, so the
tree $\TT_n$ that has been constructed by stage $n$
has vertex set $[n]:=\{1,2,\ldots, n\}$.   Construct
the tree at stage $n+1$ by adding an edge from vertex $n+1$ to a vertex chosen uniformly  among the vertices $1,2,\ldots, n$. 
We refer the reader to \cite{smythe-mahmoud} for a survey of some of the properties of the random recursive tree.  

\bigskip\noindent
{\bf Linear preferential attachment tree:} This is another sequential construction. As before, start with a single vertex (the root) at time $1$.   
Suppose the tree on 
$n$ vertices labeled by $[n]$ has been constructed.  
Think of the edges as directed away from the root and
let $D(v,n)$ be the out-degree of vertex $v\in [n]$ at time $n$
(that is, $D(v,n)$ is the number of children of vertex $v$ at time $n$). 
Construct a tree on $n+1$ vertices via the addition of an edge between the new vertex $n+1$ and the vertex $v$ in $[n]$ with probability proportional 
to $D(v,n) +1+a$, where $a > -1$ is a parameter of the process. 
There is an enormous amount of recent literature on this model. 
We refer the reader to \cite{boll-rior-survey, durrett, shanky-pref} for relevant references.

\bigskip\noindent
{\bf Uniform random rooted unordered labeled tree:}  By Cayley's theorem, there are $n^{n-1}$ rooted trees on $n$ labeled vertices
(we think of trees as abstract graphs and so we
don't consider a particular embedding of a tree in the plane when
it comes to deciding whether two trees are ``the same'' or
``different'' 
-- this is the import of the adjective ``unordered'').  Choose one of these trees uniformly at random. 
Since we are interested only in the structure of the tree, the labeling will be irrelevant. 

\bigskip\noindent
{\bf Random binary tree:} There are various models of random rooted binary trees. 
The one we shall consider is the following sequential construction.  Start
at time $1$ with the three vertex tree consisting of
a root and two leaves. At each stage, 
choose a leaf uniformly and attach two new leaves to it by two new edges.

\section{Probability fringe convergence of random trees}
\label{sec:tree-conv}

The key to understanding the asymptotic properties of
the spectra of random trees such as
those introduced in Section~\ref{sec:tree-models}
is that they converge ``locally'' to appropriate
locally finite infinite trees.  We define
the relevant notion of local convergence
in this section, and then show how it applies
to the models of Section~\ref{sec:tree-models}.

We first need to  be precise about what we mean
by the term {\em finite rooted tree}.  So far, we have
talked about trees as particular types of graphs.
That is, we have thought of a tree as being described by 
a finite set of vertices and a finite set of edges that
are unordered pairs of vertices.  A rooted tree has then been 
defined as such
an object with a particular distinguished vertex that
we call the root.  This point of view is useful for describing
constructions of random trees.  However, we will often
wish to consider two trees as being the same if they are {\em isomorphic} in
the usual graph-theoretic sense: that is, if they have
the same {\em shape} and only differ by a labeling of the vertices.
A tree in this latter sense is thus an isomorphism class
of trees thought of as graphs.  When we wish to
distinguish these two notions we will use
standard terminology and speak of
{\em labeled} and {\em unlabeled} trees, respectively.
Continuing in this vein,
we take two rooted trees (thought of as graphs) to be
the same if there is a graph-theoretic isomorphism from one
to the other that preserves the root, and we call the
corresponding equivalence classes {\em unlabeled
rooted trees}.  Even more generally,
we may consider unlabeled trees with several distinguished 
vertices.

Let $\T$ be the countable space of all finite
unlabeled rooted trees. 
Set $\T_\ast = \T \sqcup \{\ast\}$, where $\ast$
is an adjoined point.
Equip $\T$ and $\T_\ast$ with the respective discrete topologies,
and equip the Cartesian products $\T^\infty$ and $\T_\ast^\infty$
with the usual product topologies. 

Consider a finite unlabeled rooted tree $\mathbf{t} \in \T$ with root $\rho$ and
another distinguished vertex $v$ that is at distance $h$ from the root
($v$ may coincide with $\rho$, in which case $h=0$).
Let $(v = v_0, v_1, \ldots, v_h = \rho)$ denote the unique
path from the vertex $v$ to the root.
Write $t_0$ for the subtree rooted at $v_0 = v$ that
consists of all vertices for which the path to the root passes through $v_0$, and
for $1 \le k \le h$, write $t_k$ for the subtree rooted at 
$v_k$ that consists of all vertices for which the path from the root  passes through $v_k$ but not through $v_{k-1}$. 
Write $\Phi(\mathbf{t},\cdot)$ for the
probability distribution on $\T_\ast^\infty$ that places mass $(\# \mathbf{t})^{-1}$
at each of the sequences $(t_0, t_1, \ldots, t_h, \ast, \ast, \ldots) \in \T_\ast^\infty$
as $v$ ranges over the $\# \mathbf{t}$ vertices of $\mathbf{t}$.
It is clear that $\Phi$ is a probability kernel from $\T$ to $\T_\ast^\infty$.

\begin{definition}
\label{def:prob-fringe}
Let $(\TT_n)_{n=1}^\infty$ be a sequence of 
random finite unlabeled rooted trees, 
and suppose that $\TT$ is a $\T^\infty$-valued random variable.
The sequence $(\TT_n)_{n=1}^\infty$ converges in the 
{\em probability fringe sense} to $\TT$
if the sequence $\Phi(\TT_n,\cdot)$ 
of random probability measures on 
$\T_\ast^\infty$
converges weakly to the distribution of $\TT$ in the 
topology of weak convergence of probability measures on $\T_\ast^\infty$.
\end{definition} 

\begin{remark}
The definition requires that 
the empirical distribution of the sub-trees
below the various vertices of $\TT_n$ converges.  However,
it demands much more than this:
for each $k \ge 1$,
the joint empirical distribution of the sub-tree
below a vertex and the sub-trees below each of its
$k$ most recent ancestors must also converge.
\end{remark}

\begin{remark}
Note that any sequence $(t_0, t_1,\ldots) \in \T^\infty$ may be thought of as a locally finite
unlabeled rooted tree with one end (that is, with a single semi-infinite path) 
via the identification of the roots of $t_k$, $k\in \Zbold^+$, 
as the successive vertices on the unique semi-infinite path from the root. 
We call such trees {\em {\tt sin}-trees} (for single infinite path trees). 
\end{remark}

\begin{remark}
The terminology ``probability fringe convergence'' is not standard. In the literature, the convergence of 
the local structure around  a {\bf uniformly chosen} vertex of
$\TT_n$ to the structure around the root for some limiting random 
{\tt sin}-tree is an instance of what has been termed ``local weak convergence'' by Aldous, see \cite{aldous-obj}. 
Our definition is somewhat stronger.  
\end{remark}
 
A powerful technique for establishing probability fringe convergence of an ensemble of random trees
is to first show that each member of the ensemble can be constructed as the family tree
of a suitable stopped
continuous-time branching process.  (For us, a continuous-time branching process
is the sort of object considered in \cite{MR1014449}: individuals give birth 
to a possibly random number of offspring at the
arrival times of a point process up to a possibly
infinite death time, and those offspring go on to behave as independent
copies of their parent.)
The next result describes such embeddings for the ensembles
of Section~\ref{sec:tree-models}.

\begin{Proposition}
\label{prop:various-embedding}

\medskip\noindent
(a) {\em [Random recursive tree]} 
Consider a continuous time branching process 
that starts with a single progenitor, individuals live forever, and individuals produce
a single offspring at each arrival time of a unit rate Poisson process  
(this process is sometimes called the {\em Yule process}, but the
usage of that terminology is not completely consistent in the literature).
Write 
$\FF(t) \in \T$ for the corresponding family tree at time $t \ge 0$.
Set $T_n := \inf\{t> 0: \# \FF(t) = n\}$.
Then $\FF(T_n)$ has the same distribution as  $\TT_n$,
where $\TT_n$ is the random recursive tree on $n$ vertices.

\medskip\noindent
(b) {\em [Linear preferential attachment tree]} 
Consider a continuous time branching process that starts with a single progenitor, individuals live forever, and the point process representing the offspring distribution of any individual is a pure birth point process started at $0$ that can be described as follows: Whenever any individual has already given birth to $k$ direct offspring, the individual produces a new offspring at rate $k+1+a$. In particular, at the time an individual
is born, the individual
generates new offspring at rate $1+a$. Thus, the times that elapse between
the birth of an individual and the successive births of the individual's offspring,
say $(\beta_1, \beta_2, \ldots)$, may be written as
$\beta_i = \sum_{j=0}^{i-1} \eta_j$, where the successive $\eta_j$
are independent exponential random variables and $\eta_j$ has rate $j+1+a$.
  Each individual in the population has its own independent and identically distributed copy of the above offspring point process. Write $\FF(t) \in \mathbb{T}$ for the corresponding family tree at time $t\geq 0$. Set $T_n :=\inf\{t> 0: \#\FF(t) =n\}$. Then, $\FF(T_n)$ has the same distribution as $\TT_n$, where $\TT_n$ is the linear preferential attachment tree on $n$ vertices with parameter $a> -1$.

\medskip\noindent
(c) {\em [Uniform random rooted unordered labeled tree]}
Let $Z_\infty$ be the complete family tree for a (discrete-time)
Galton-Watson branching process with mean $1$ Poisson offspring distribution. 
Note that $Z_\infty$ is finite almost surely.
The distribution of $Z_\infty$ conditioned on $\# Z_\infty = n$ is the same as
that of $\TT_n$, where $\TT_n$ is the objected obtained by taking
the uniform random rooted unordered tree on 
$n$ labeled vertices and removing the labeling.

\medskip\noindent
(d) {\em [Random binary tree]}
Consider a continuous-time branching process
that starts with a single progenitor, individuals live until a rate $1$
exponential time, at which time they produce two offspring
(we will refer to this process as the {\em random binary splitting process}).
Write $\FF(t) \in \T$ for the corresponding family tree at time $t \ge 0$.
Set $T_n := \inf\{t> 0: \# \FF(t) = n\}$.
Then, $\FF(T_n)$ has the same distribution as  $\TT_n$, 
where $\TT_n$ is the random binary tree on $n$ vertices. 
\end{Proposition}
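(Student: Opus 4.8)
The plan is to treat the four ensembles by a single mechanism: in each case I will exhibit a continuous-time branching process whose family tree, observed at the stopping time $T_n$ at which the population first reaches size $n$, is constructed by a sequence of ``add one vertex'' moves, and I will check that the conditional law of the newly added edge, given the current tree, matches the prescribed sequential rule. The key structural fact I will invoke is the memorylessness of the exponential clocks: at any time $t$ at which the population has size $m$, the individual who produces the next offspring is determined by which of finitely many independent exponential alarms rings first, and the probability that a given individual $v$ is the one is proportional to that individual's current birth rate; moreover, conditionally on the identity of the parent, the rest of the tree's evolution up to time $T_n$ is unaffected. Thus it suffices, for each model, to (i) identify the per-individual birth rate as a function of the individual's current out-degree, and (ii) verify that the resulting attachment probability proportional to that rate coincides with the defining rule of the discrete model. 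An induction on $n$, with the base case $n=1$ (a single vertex) being immediate, then completes the argument: the inductive step is exactly the ``split by the first clock'' computation just described, together with the observation that after a birth the updated rates again have the required form, so the process regenerates.

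Carrying this out model by model: for (a), every individual has birth rate $1$ regardless of out-degree, so the parent of vertex $n+1$ is uniform among the $n$ existing vertices, which is the random recursive tree rule. For (b), an individual with current out-degree $k$ has rate $k+1+a$, so the parent is chosen with probability proportional to $D(v,n)+1+a$, matching the linear preferential attachment rule; I will also record here that the inter-birth times have the stated representation $\beta_i=\sum_{j=0}^{i-1}\eta_j$ with $\eta_j$ exponential of rate $j+1+a$, which is just the description of a pure-birth chain with these rates. For (d), an individual splits at rate $1$ and, upon splitting, turns from a leaf into an internal vertex with two leaf children; one checks by the same first-clock argument that at each step a uniformly chosen current leaf is selected and given two children, which is the random binary tree rule, and that the population size increases by exactly one at each splitting event (a leaf is replaced by an internal node plus two leaves, net change in vertex count $+1$ if one counts the two new leaves and the now-internal old leaf — I will be careful to reconcile the bookkeeping so that $\#\FF(t)$ increments by one per event as the definition of $T_n$ requires). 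Case (c) is different in flavor: here I will not use a continuous-time embedding but instead the classical bijective/combinatorial fact that a Galton--Watson tree with Poisson($1$) offspring law, conditioned to have total progeny $n$, is distributed as a uniform rooted labeled tree on $[n]$ with labels forgotten; this is a standard consequence of the exchangeability of the Poisson offspring counts (equivalently, of the fact that the unconditioned probability of a given shape is proportional to $\prod_v 1/(c_v!)$, which is exactly the reciprocal of the number of labelings), so I will cite it and indicate the one-line verification that $Z_\infty$ is a.s.\ finite since the mean offspring number equals $1$ (critical Galton--Watson).

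The main obstacle is not any single hard estimate but rather the careful synchronization of two descriptions of ``the tree at size $n$'': the sequential/discrete description and the snapshot $\FF(T_n)$ of the continuous-time process. Concretely, the delicate point is to argue that conditioning the continuous-time process on the event $\{\#\FF(T_{n}) = n\}$ (trivially true) and, more importantly, using the strong Markov property at each successive birth time does not introduce any dependence between the choice of parent at step $m$ and the future clocks — this is where memorylessness of the exponentials is essential and must be stated cleanly, ideally as a small lemma of the form ``given $\FF$ has size $m$ with individual birth rates $r_1,\dots,r_m$, the next individual to give birth is $i$ with probability $r_i/\sum_j r_j$, independently of everything that happens afterward.'' Once that lemma is in place the four verifications are routine, with case (d)'s vertex-counting convention being the only place demanding genuine care.
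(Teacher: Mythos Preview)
Your approach is essentially identical to the paper's: the authors dispose of (a), (b), (d) in one line by ``comparison of the rates of the production of the offspring and the corresponding growth dynamics,'' which is exactly your first-clock/memorylessness argument, and for (c) they likewise defer to the well-known combinatorial fact (via planar orderings and Cayley's theorem, citing Aldous) rather than proving it from scratch.

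One concrete correction on the bookkeeping you flagged in (d): the family tree $\FF(t)$ records all individuals ever born, so at each splitting event the vertex count jumps by $2$, not $1$ (the dying individual remains as an internal node and two new leaves are added). Consequently $\#\FF(t)$ takes only odd values and $T_n$ is finite precisely for odd $n$; this is consistent with the discrete random binary tree, which starts with three vertices and gains two at each step. Your tentative ``net change $+1$'' is off, but once you replace it with $+2$ the rest of your argument goes through unchanged.
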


\begin{proof}
Parts (a), (b) and (d) follow from the comparison of the rates of the 
production of the offspring and the corresponding growth dynamics of the 
associated tree $\TT_n$. Part (c) is well-known
and follows from randomly ordering the offspring of each individual to
obtain an ordered (that is, planar) tree, computing the conditional
probability distribution of the resulting rooted ordered tree,
randomly labeling the vertices of the rooted ordered tree,
and verifying that the randomly labeled tree is uniformly distributed
using Cayley's theorem for the number of rooted labeled trees on 
$n$ vertices (see, for example, \cite{Ald91a}).  
\end{proof}

We now describe briefly the limiting {\tt sin}-trees for the models considered above. Recall that a {\tt sin}-tree can be thought of as an element of $\T^\infty$. The following proposition follows from well-known results, and we give the appropriate references for each specific construction. 

\begin{Proposition}
Each of the four ensembles of Section~\ref{sec:tree-models}. converges in the probability fringe sense, (as defined in Definition~\ref{def:prob-fringe}).   The limiting random 
{\tt sin}-tree for each model is described explicitly in Construction~\ref{const:sin}.
\end{Proposition}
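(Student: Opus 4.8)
The plan is to handle the conditioned Galton--Watson ensemble (c) separately from the three continuous-time branching process ensembles (a), (b), (d), reducing the claim in each case to an already-available limit theorem. For the uniform random rooted labeled tree, part (c) of Proposition~\ref{prop:various-embedding} identifies $\TT_n$ with the mean-one Poisson Galton--Watson tree conditioned to have total size $n$, and I would simply invoke Aldous's theorem on fringe (local weak) convergence of conditioned critical Galton--Watson trees \cite{aldous-fringe, aldous-obj}: such a tree converges in the probability fringe sense, and the limiting {\tt sin}-tree is Kesten's size-biased Galton--Watson tree, in which the vertices along the semi-infinite spine reproduce according to the size-biased offspring law and all subtrees hanging off the spine are independent unconditioned Poisson Galton--Watson trees. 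The only point needing a word is that conditioning on $\# Z_\infty = n$ does not disturb the fringe limit, which follows from the standard local limit theorem for the total progeny; one then reads Construction~\ref{const:sin} off the spine description.

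For the three remaining ensembles I would use the embedding of each $\TT_n$ as the family tree $\FF(T_n)$ of a supercritical Crump--Mode--Jagers branching process stopped at the first time the population reaches size $n$, together with the theory of general branching processes counted by random characteristics (Jagers--Nerman; see \cite{MR1014449} and the use made of it in \cite{rudas}). Unwinding Definition~\ref{def:prob-fringe}, probability fringe convergence is equivalent to the statement that, for every fixed $k \ge 0$ and every finite pattern $(s_0,\ldots,s_k)$, the $\Phi(\TT_n,\cdot)$-mass of the cylinder $\{(s_0,\ldots,s_k)\} \times \T_\ast^\infty$ converges in probability to the corresponding mass under the law of the limiting {\tt sin}-tree; the cases in which the adjoined symbol $\ast$ appears contribute negligibly because the distance from a uniformly chosen vertex to the root tends to infinity in probability. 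I would express that cylinder mass as the ratio $Z^\phi_{T_n}/Z^{\mathbf{1}}_{T_n}$, where $Z^{\mathbf{1}}_t = \#\FF(t)$ and $Z^\phi_t$ is the branching process counted with the bounded random characteristic $\phi$ indicating the prescribed joint pattern. The descendant-subtree coordinate $s_0$ is literally a function of an individual and its progeny evaluated at the individual's current age, hence an admissible characteristic; the coordinates $s_1,\ldots,s_k$ that look \emph{up} the ancestral line I would handle either by a spine change of measure in the Lyons--Pemantle--Peres style or, more concretely, by summing over the $k$th ancestor $w$ of a vertex and recording, as a characteristic of $w$, which of its distance-$k$ descendants is singled out together with all the subtrees this choice determines --- a device that converts the ancestral data into a bounded descendant characteristic at the cost of a finite sum.

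With this reduction in place I would apply Nerman's strong law for supercritical general branching processes. Each offspring point process here has a density (so the process is non-lattice) and possesses a Malthusian parameter $\lambda$ ($\lambda = 1$ for the Yule process of (a), $\lambda = \gamma_a = a+2$ for the linear preferential attachment process of (b), and $\lambda = 1$ for the binary splitting process of (d)), and the relevant characteristics are bounded by $1$, so the $x\log x$-type integrability hypotheses are trivial; one concludes that, as $t \to \infty$,
\[
\frac{Z^\phi_t}{Z^{\mathbf{1}}_t}\ \longrightarrow\ \lambda\int_0^\infty e^{-\lambda a}\,\Ex\!\left[\phi(a)\right]\,da
\]
almost surely. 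Since $T_n \to \infty$ almost surely and $Z^{\mathbf{1}}_{T_n} = n$, this convergence along deterministic times transfers to the random times $T_n$, giving the desired law of large numbers for pattern frequencies; identifying the limiting value with the pattern probability for the {\tt sin}-tree of Construction~\ref{const:sin} is then a matter of recognizing that the $e^{-\lambda a}$-weighting is exactly what produces the size-biased reproduction along the spine and the independent fringe subtrees off it.

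The main obstacle is the middle step: making rigorous the passage from the joint \emph{ancestral} fringe pattern to something the branching-process-counted-by-characteristics machinery can see, since that machinery natively describes only functionals of an individual together with its descendants. Both remedies above require some care to keep the relevant characteristic bounded and to re-check the hypotheses of Nerman's theorem after the change of measure or the re-rooting at the $k$th ancestor; everything else --- the non-lattice and Malthusian verifications for the three explicit offspring laws, and the transfer from times $t$ to the random times $T_n$ --- is routine, and for the uniform tree there is essentially no obstacle beyond correctly quoting \cite{aldous-fringe, aldous-obj}.
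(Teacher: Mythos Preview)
Your outline is correct and, in substance, is precisely the content of the references the paper cites in lieu of a proof: the paper does not actually prove this proposition but simply appeals to \cite{jagers-nerman, nerman-jagers, aldous-fringe, shanky-pref, grimmett}. Your treatment of the three continuous-time ensembles via Nerman's law of large numbers for branching processes counted with characteristics, together with the transfer from deterministic times to the stopping times $T_n$, is exactly the argument in \cite{rudas, shanky-pref}; the Malthusian and non-lattice checks you list are the right ones.

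The one point worth sharpening is the ``obstacle'' you flag, namely that Definition~\ref{def:prob-fringe} asks for convergence of the joint empirical distribution of the fringe subtree \emph{together with} the subtrees hanging off the first $k$ ancestors, whereas the characteristics machinery sees only functionals of an individual and its descendants. Your proposed fix---re-root at the $k$th ancestor and encode the whole pattern as a bounded characteristic of that ancestor---works, but you should be aware that this is precisely the content of the Nerman--Jagers paper \cite{nerman-jagers} on the \emph{stable doubly infinite pedigree process}, which the paper cites for exactly this purpose: that paper builds the spine decomposition and shows that the joint law of the ancestral fringe subtrees converges to the sin-tree of Construction~\ref{const:sin}. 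So rather than redoing the change-of-measure or re-rooting argument by hand, you can (and the paper implicitly does) quote \cite{nerman-jagers} directly. For the uniform random tree, the paper's citation is to Grimmett \cite{grimmett} rather than to Aldous, but your route through conditioned Galton--Watson trees and Kesten's tree is equivalent and equally standard.
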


\begin{Construction}
\label{const:sin}

\medskip\noindent
(a) [{\em Random recursive tree: \cite{jagers-nerman, nerman-jagers, aldous-fringe}}]
Let $\FF_i(\cdot)$ be independent rate one Yule processes. 
Let $X_0, X_1,\ldots$ be independent rate $1$ exponential random variables 
and put $S_i = \sum_{j=0}^i X_j$. Then, 
the limiting {\tt sin}-tree has the distribution of
$(\FF_i(S_i))_{i=0}^\infty$. 

\medskip\noindent
(b) [{\em Linear preferential attachment: \cite{nerman-jagers, jagers-nerman, shanky-pref}}] Let $(X_i)_{i=0}^\infty$ be independent exponential random variables, where 
$X_0$ has rate $2+a$ and each $X_i$, $i>0$, has rate $1+a$.  Let 
$(\FF_i)_{i=0}^\infty$ be continuous time branching processes that are conditionally
independent given $(X_i)_{i=0}^\infty$, with the conditional distribution of
$\FF_i$ being that in part (b) of Proposition~\ref{prop:various-embedding} 
subject to the minor modifications that the point process describing the
times at which the root individual gives birth is conditioned to have a birth at time 
$X_i$ and the offspring born at this time and all its descendants are removed from the population.  All other vertices give birth to according to the original offspring point process. 
Then, 
the limiting {\tt sin}-tree has the distribution of
$(\FF_i(\sum_{j=0}^i X_j))_{i=0}^\infty$.

\medskip\noindent
(c) [{\em Uniform random tree: \cite{grimmett}}] The limiting {\tt sin}-tree 
has the distribution of an infinite sequence of independent copies 
of the critical Poisson Galton-Watson tree $Z_\infty$ of part (c) of 
Proposition~\ref{prop:various-embedding}.

\medskip\noindent
(d) [{\em Random binary tree: \cite{aldous-fringe}}] 
Let $(\FF_i)_{i=0}^\infty$ be independent random binary splitting processes
as in part (d) of 
Proposition~\ref{prop:various-embedding}.
Let  $(X_i)_{i=0}^\infty$ be independent rate $1$ exponential random variables and set $S_i = \sum_{j=0}^i X_j$. 
Define $\T$-valued random variables
$(\UU_i)_{i=0}^\infty$ as follows. Put $\UU_0 = \FF_0(S_0)$. For $i \ge 1$, $\UU_i$ is constructed by attaching a new vertex $\rho_i$ to the root of $\FF_i(S_{i-1})$ and re-rooting the resulting tree at $\rho_i$.  Then, the limiting {\tt sin}-tree has the distribution of
$(\UU_i)_{i=0}^\infty$.
\end{Construction}

\newpage
\section{Statement of results}
\label{sec:results}

\subsection{Convergence of the spectral distribution and atoms in the limiting spectral distribution}
\label{subsec:conv_spec_dist}

\begin{Theorem}
\label{theo:esd}
Suppose that $(\TT_n)_{n=1}^\infty$ is a sequence of 
random finite unlabeled rooted trees that converges in 
the probability fringe sense to a  {\tt sin}-tree 
$\TT = (\TT^0, \TT^1, \ldots)$.  Let $F_n$ denote the spectral distribution of 
the adjacency matrix of $\TT_n$.  
Then the following are true.
\begin{itemize}
\item[(a)] There exists a (model dependent)
deterministic probability distribution $F$ 
such that  $F_n$
converges in distribution to $F$ in the
topology of weak
convergence of probability measures on $\Rbold$.
\item[(b)] For any $\gamma \in \mathbb{R}$, $F_n(\{\gamma\})$
converges in distribution to a (model dependent) constant $c_\gamma$
as $n \rightarrow \infty$. Moreover, $F(\{\gamma\}) \ge c_\gamma$.
\item[(c)] Consider a forest $\mathbf{u}$
composed of finitely many
finite unlabeled rooted trees, and assume that some
eigenvalue $\gamma$ of the adjacency matrix of
$\mathbf{u}$ has multiplicity $L > 1$.
Write $\UU$ for the random
forest obtained by deleting the root of $\TT^0$ from $\TT^0$,
and suppose that $\mathbb{P}\{\UU = \mathbf{u}\} > 0$.  Then,
the constant $c_\gamma = \lim_{n \to \infty} F_n(\{\gamma\})$ is strictly positive and hence
$\gamma$ is an atom of the limiting spectral
distribution $F$.
\end{itemize}
\end{Theorem}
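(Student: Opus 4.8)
The plan is to prove the three parts in sequence, with each building on the previous one and on the interlacing machinery described in the introduction. For part (a), I would fix a small $\eps > 0$ and use the probability fringe convergence to delete a $\eps$-fraction of the edges of $\TT_n$ so as to break it into a forest $\FF_n^\eps$ whose components are ``locally defined'': each component should, up to the deleted edges, be the fringe subtree below some vertex, truncated at a bounded depth. By the interlacing inequalities, the Lévy (or Kolmogorov) distance between $F_n$ and the spectral distribution of $\FF_n^\eps$ is at most a constant times $\eps$. The spectral distribution of $\FF_n^\eps$ is the $(\# \TT_n)^{-1}$-weighted sum of the spectral distributions of its components, and since component shapes are read off from the fringe kernel $\Phi(\TT_n, \cdot)$, probability fringe convergence forces this empirical object to converge in distribution to a deterministic limit $F^\eps$ depending only on the law of $\TT$. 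A standard diagonal/Cauchy argument in $\eps$ then produces the limit $F$ and upgrades convergence in probability of $F_n$ to convergence in distribution; the limit is deterministic because each $F^\eps$ is. The main subtlety here is choosing the edge-deletion scheme so that (i) only an $O(\eps)$-fraction of edges is removed with high probability, and (ii) the resulting component multiset is genuinely a continuous functional of a fixed-depth truncation of the fringe measure — this is where the full strength of probability fringe convergence (joint convergence over a vertex and its $k$ ancestors), rather than mere local weak convergence, gets used.

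For part (b), the point is that $F_n(\{\gamma\})$ is \emph{not} continuous in the weak topology, so (a) does not give it directly. Instead I would note that the number of eigenvalues equal to $\gamma$ of a symmetric matrix, divided by the dimension, changes by at most $O(\eps)$ under the same edge deletion (again by interlacing: deleting $k$ edges changes the nullity of $A - \gamma I$, hence the $\gamma$-multiplicity, by at most $2k$). So $F_n(\{\gamma\})$ is within $O(\eps)$ of the corresponding atom-weight of $\FF_n^\eps$, which is a bounded continuous functional of the fringe measure and hence converges in distribution to a constant $c_\gamma^\eps$; letting $\eps \to 0$ gives the constant $c_\gamma$. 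For the inequality $F(\{\gamma\}) \ge c_\gamma$, I would observe that atom weight is upper semicontinuous under weak convergence, so $F(\{\gamma\}) \ge \limsup_n \Ex[F_n(\{\gamma\})]$... more carefully, $\Ex F_n \to F$ weakly and $\mu \mapsto \mu(\{\gamma\})$ is upper semicontinuous, giving $F(\{\gamma\}) \ge \limsup_n \Ex F_n(\{\gamma\}) = c_\gamma$.

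For part (c), I want to show $c_\gamma > 0$ by exhibiting a positive density of vertices whose local neighborhood forces an eigenvalue $\gamma$. The key algebraic input: if $T$ is a finite tree, $v$ a vertex, and $\UU$ the forest $T - v$ obtained by deleting $v$ (and its incident edges), then by Cauchy interlacing the $\gamma$-multiplicity of $T$ is at least (the $\gamma$-multiplicity of $\UU$) $- 1$. So if $\UU$ contains $\gamma$ with multiplicity $L$, then $T$ has $\gamma$ as an eigenvalue with multiplicity at least $L - 1 \ge 1$. Now under probability fringe convergence, the fraction of vertices $v$ of $\TT_n$ whose fringe subtree $\TT^0$ (below $v$) equals any fixed tree $\mathbf{t}$ converges to $\Pr\{\TT^0 = \mathbf{t}\}$; choosing $\mathbf{t}$ so that $\mathbf{t}$ minus its root equals $\mathbf{u}$ — which has positive probability by hypothesis, since $\{\UU = \mathbf{u}\}$ has positive probability and this event is a fringe event — we get a positive-density set of vertices $v$, pairwise "independent" in the sense that their fringe subtrees are vertex-disjoint, each contributing at least one eigenvalue $\gamma$ to $\TT_n$ through disjoint principal submatrices. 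Summing these disjoint contributions (using that eigenvalue multiplicities add over disconnected principal submatrices, then interlacing back up to the whole tree) yields $\liminf_n F_n(\{\gamma\}) \ge \delta > 0$ in probability for some $\delta$, hence $c_\gamma \ge \delta > 0$, and then $F(\{\gamma\}) \ge c_\gamma > 0$ by part (b). The hard part will be the bookkeeping that turns "many disjoint fringe subtrees each carrying a $\gamma$-eigenvalue" into a genuine lower bound on the $\gamma$-multiplicity of the whole adjacency matrix: one needs to delete the edges connecting these fringe subtrees to the rest of $\TT_n$ (an $o(n)$ or $O(\eps n)$ fraction, handled as in parts (a)–(b)), apply interlacing, and check the multiplicities of the deleted-off pieces add up correctly. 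I expect this edge-deletion-and-interlacing argument, together with verifying that $\{\UU = \mathbf{u}\}$ is detectable from a bounded-depth fringe, to be the technical crux.
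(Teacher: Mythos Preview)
Your approach to parts (a) and (b) is essentially the paper's: fix a size threshold $K$, let $\UU_n$ be the forest induced on the vertices of $\TT_n$ whose fringe subtree has at most $K$ vertices, use probability fringe convergence to show that the spectral distribution (resp.\ the mass at $\gamma$) of $\UU_n$ converges to a deterministic limit, and use interlacing (Proposition~\ref{P:submatrix_bound}(b),(d)) to transfer this back to $\TT_n$ since the number of deleted vertices is small when $K$ is large. The paper uses ``subtree size $\le K$'' rather than your ``bounded depth'', which matters because subtrees of bounded depth need not have bounded size, but the skeleton is the same.

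Part (c), however, has a real gap. You correctly note that if $v$ is a vertex whose fringe subtree $\mathbf{t}$ satisfies $\mathbf{t}\setminus\{\text{root}\}=\mathbf{u}$, then interlacing applied \emph{inside} $\mathbf{t}$ gives $\gamma$-multiplicity at least $L-1$ for the adjacency matrix of $\mathbf{t}$. The problem is the step where you ``interlace back up to the whole tree''. If there are $K\sim p M_n$ such vertices $v$ (with $p=\Pr\{\UU=\mathbf{u}\}>0$), then the edges connecting these $K$ disjoint copies of $\mathbf{t}$ to the rest of $\TT_n$ number exactly $K$, which is $\Theta(M_n)$, not $o(M_n)$ or $O(\eps M_n)$ as you assert. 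Deleting these $K$ edges is a rank-$2K$ perturbation, so the $\gamma$-multiplicity of $\TT_n$ is only guaranteed to be at least $K(L-1)-2K=K(L-3)$, which is useless for $L\in\{2,3\}$. Alternatively, treating the union of the $K$ subtrees as a principal submatrix on $K\,\#\mathbf{t}$ vertices and interlacing up loses $M_n-K\,\#\mathbf{t}$, which is again $\Theta(M_n)$ and typically swamps the gain.

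The paper's fix (Corollary~\ref{C:multiplicity_bound_trees}) is a single change of viewpoint: delete the $K$ vertices $v$ themselves from $\TT_n$, rather than the edges above them. The resulting principal submatrix, on $M_n-K$ vertices, is block diagonal and contains $K$ disjoint copies of the forest $\mathbf{u}$, each with $\gamma$-multiplicity $L$ (not $L-1$). One application of Proposition~\ref{P:submatrix_bound}(c) then gives
\[
\mathrm{mult}_\gamma(A_n)\;\ge\; KL - K \;=\; K(L-1),
\]
so $F_n(\{\gamma\})\ge (K/M_n)(L-1)\to p(L-1)>0$. By deleting the root $v$ rather than the edge above it, you simultaneously (i) expose $\mathbf{u}$ (multiplicity $L$) instead of $\mathbf{t}$ (multiplicity $\ge L-1$), and (ii) pay only $K$ in the interlacing bound rather than $2K$ or $M_n-K\,\#\mathbf{t}$. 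That is the missing idea.
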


\begin{figure}
	\begin{center}
	\includegraphics[width=0.85\textwidth]{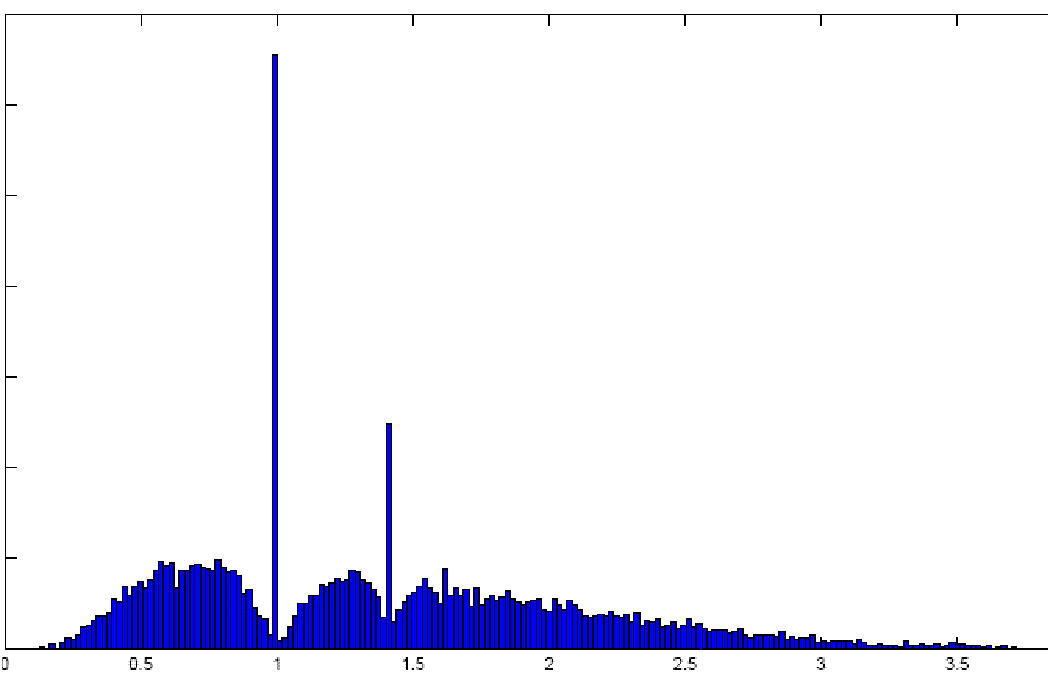}
	\end{center}
	\caption{Empirical distribution for the  positive eigenvalues of the
	random recursive tree with 200 vertices, averaged over 200 realizations.}
	\label{fig:rrt_0_200_200_clip}
\end{figure}

\begin{figure}
	\begin{center}
		\includegraphics[width=0.85\textwidth]{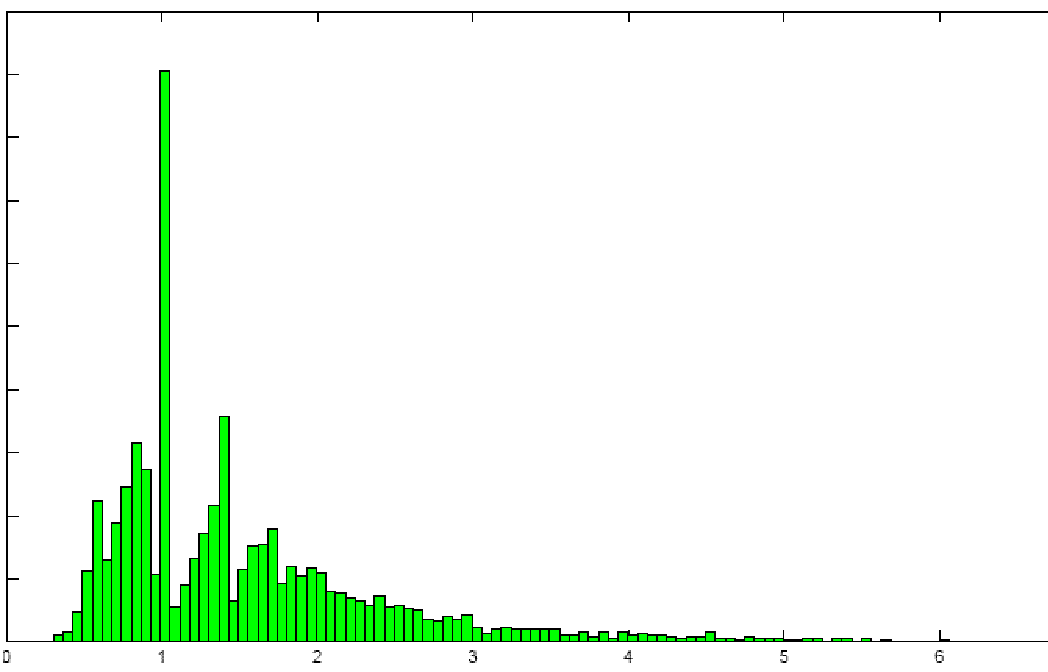}
	\end{center}
	\caption{Empirical distribution for the  positive eigenvalues of the
	preferential attachment tree ($a=0$) with 100 vertices, averaged over 200 realizations.}
	\label{fig:pa_0_100_200_clip}
\end{figure}

\begin{figure}
	\begin{center}
	\includegraphics[width=0.90\textwidth]{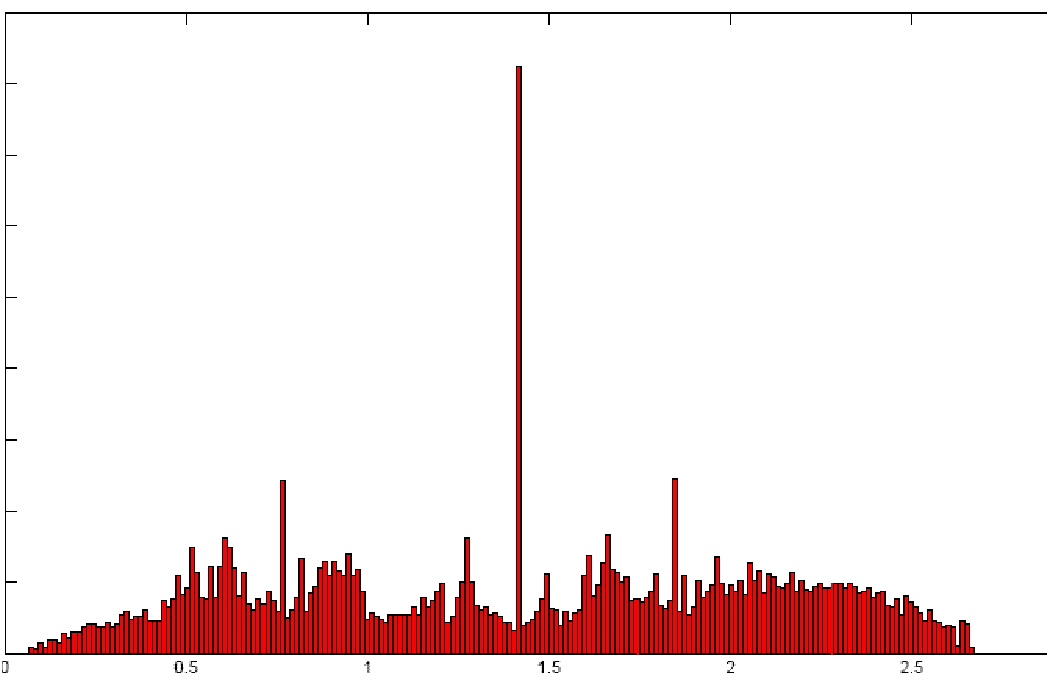}
	\end{center}
	\caption{Empirical distribution for the positive eigenvalues of the
	random binary tree with 401 vertices, averaged over 100 realizations.}
	\label{fig:yule_0_200_100_clip}
\end{figure}

\begin{remark}
Simulations of the expected value the spectral distribution for various finite
random trees are shown in Figure~\ref{fig:rrt_0_200_200_clip}, 
Figure~\ref{fig:pa_0_100_200_clip}, and Figure~\ref{fig:yule_0_200_100_clip}.
The large number of ``spikes'' in these figures is a reflection
of parts (b) and (c) of Theorem~\ref{theo:esd}
and the observation
that the set of atoms of the limiting spectral distribution
$F$ is dense in the real line $\mathbb{R}$
(resp. in the interval $[-2 \sqrt{2}, 2 \sqrt{2}]$)
for the random recursive tree, the linear preferential
attachment tree and the uniform random tree (resp. for the
random binary tree).  To see these claims, first
note that the adjacency matrix of a forest $\mathbf{u}$ has
an eigenvalue $\gamma$ with multiplicity greater than $1$ 
if $\gamma$ is an eigenvalue of more than one of the trees that
make up $\mathbf{u}$.  In particular, this condition
holds if two or more of the
trees that make up $\mathbf{u}$ are equal to some common tree $\mathbf{t}$,
and $\gamma$ is an eigenvalue of $\mathbf{t}$.  It is clear for
the random recursive tree, the linear preferential attachment tree,
and the uniform random tree,
that, in the notation of Theorem~\ref{theo:esd},
if $\mathbf{u}$ is any forest of finite unlabeled rooted trees, then
$\mathbb{P}\{\UU = \mathbf{u}\} > 0$, and so any number $\gamma$
that is the eigenvalue of the adjacency matrix of some finite tree
will be an atom of the limiting spectral distribution $F$ for these models.
From Theorem~7 of \cite{MR2353161}, the eigenvalues of the adjacency matrix
of the rooted tree  in which every non-leaf vertex has out-degree
$d$ and each leaf is distance $k-1$ from the root are
\[
2 \sqrt{d} \cos\left(\frac{\pi \ell}{j+1}\right), \quad j=1, \ldots, k, \;
\ell=1, \ldots, j,
\]
with given multiplicities.  A similar argument shows that the limiting
spectral distribution for the random binary tree has a set of atoms
that is dense in the interval $[-2 \sqrt{2}, 2 \sqrt{2}]$,
and because we can embed any binary tree
into a complete binary tree of suitable height,
we see that the limiting spectral measure in fact
has this interval as its support.
\end{remark}

\begin{remark}
In light of the previous remark, it is natural to inquire
whether the limiting spectral distribution $F$ is
purely discrete or whether it also has a continuous
component.  Our methods do not suffice to resolve this
question.
\end{remark}

\begin{remark} 
Recall that the {\em graph Laplacian} of a tree $\mathbf{t}$
with adjacency matrix $A$ is the matrix $A - D$, where $D$
is the diagonal matrix recording the degrees of the vertices of $\mathbf{t}$
(we caution the reader that some authors refer to the negative of this matrix
as the Laplacian).
The methods we use to establish Theorem~\ref{theo:esd}
can also be used to show that if the sequence  $(\TT_n)_{n=1}^\infty$ 
converges in the probability fringe sense, then the spectral
distribution of the Laplacian matrix of $\TT_n$ converges in
distribution to a deterministic probability distribution on $\Rbold$.
\end{remark}

\begin{remark}\label{rem:moment_div}
The following result shows that  the method of moments cannot be used for 
linear preferential attachment model with parameter $a=0$. We omit the proof.
\end{remark}

\begin{Lemma}
\label{lemma:inf-fourth}
Let $A_n$ be the adjacency matrix of the linear preferential attachment tree $\TT_n$
with $a=0$. Then
\[
\lim_{n \rightarrow \infty} \E \left[\frac{1}{n} \Tr(A_n^4)\right] = \infty. 
\]
\end{Lemma}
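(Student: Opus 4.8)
The plan is to compute $\tfrac{1}{n}\Tr(A_n^4)$ combinatorially and show that its expected value diverges. Recall that $\Tr(A_n^4)$ counts the number of closed walks of length $4$ in $\TT_n$. In a tree, a closed walk of length $4$ based at a vertex $v$ is of one of two types: either it goes $v \to w \to v \to w' \to v$ for (not necessarily distinct) neighbors $w, w'$ of $v$, or it goes $v \to w \to x \to w \to v$ for a neighbor $w$ of $v$ and a neighbor $x$ of $w$. Counting these, one gets
\[
\Tr(A_n^4) = \sum_v \deg(v)^2 + \sum_v \sum_{w \sim v} \deg(w) = \sum_v \deg(v)^2 + \sum_{v \sim w} \big(\deg(v) + \deg(w)\big),
\]
and in fact the standard identity is $\Tr(A_n^4) = 2\,\#\{\text{edges}\} \cdot \text{(something)} + \ldots$; the precise bookkeeping is routine, but the essential point is that $\Tr(A_n^4) \geq c \sum_v \deg(v)^2$ for some absolute constant $c > 0$, since the term $\sum_v \deg(v)^2$ (counting the walks $v\to w\to v\to w'\to v$) already appears with a positive coefficient and all contributions are non-negative. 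So it suffices to show $\tfrac{1}{n}\E\big[\sum_{v} \deg(v)^2\big] \to \infty$.

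Next I would bound $\sum_v \deg(v)^2$ from below by the single largest term, or better, by the contribution of the first few vertices. In the linear preferential attachment tree with $a = 0$, the out-degree $D(v,n)$ of a fixed early vertex $v$ grows like $n^{1/2}$; more precisely, it is classical (and follows from the continuous-time branching process embedding in part (b) of Proposition~\ref{prop:various-embedding}, together with the Malthusian rate $\gamma_a = a + 2 = 2$ mentioned in the abstract) that $n^{-1/2} D(v,n)$ converges in distribution to a non-degenerate limit, and moreover the convergence holds with convergence of second moments, so $\E[D(v,n)^2] \asymp n$ for each fixed $v$. Hence $\E[\deg(v)^2] \geq \E[D(v,n)^2] \asymp n$ for, say, $v = 1$, which already gives $\tfrac{1}{n}\E[\Tr(A_n^4)] \gtrsim \tfrac{1}{n} \cdot c \cdot n = c$ — bounded below, but not yet divergent. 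To get divergence I would instead sum over the vertices $v = 1, 2, \ldots, m$ born in the first $m$ stages: each such vertex satisfies $\E[D(v,n)^2] \geq c_m n$ for $n$ large (with $c_m > 0$ depending on the birth time of $v$ but bounded below by a positive constant for $v$ among the first $m$, once $n \gg m$), so $\tfrac{1}{n}\E[\sum_{v=1}^m \deg(v)^2] \geq c' m$ for all $n$ sufficiently large. Letting $n \to \infty$ and then $m \to \infty$ yields the claim.

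The main obstacle is making the second-moment estimate $\E[D(v,n)^2] \geq c n$ uniform in a usable way — specifically, producing a constant $c_m > 0$ that works simultaneously for all the first $m$ vertices once $n$ is large. This is most cleanly handled through the branching-process picture: $D(v,n)$ is the number of offspring produced by individual $v$ up to time $T_n$, $T_n - \log n/\gamma_a$ converges almost surely, and the offspring point process of $v$ run for time $t$ has size growing like $e^{\gamma_a t}$ with finite second moment; alternatively one can run a direct martingale/Pólya-urn computation showing $\E[D(v,n)(D(v,n)+1)]$ satisfies an exact recursion in $n$ whose solution is $\asymp n$. Either route gives the needed lower bound; the rest is the routine closed-walk counting sketched above.
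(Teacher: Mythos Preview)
The paper explicitly omits the proof of this lemma, so there is nothing to compare your argument against directly. Your overall strategy---reducing $\Tr(A_n^4)$ to $\sum_v \deg(v)^2$ via closed-walk counting and then controlling the degree moments of early vertices---is the natural one and is almost certainly what the authors had in mind. In fact for a tree one has the clean identity $\Tr(A_n^4) = 2\sum_v \deg(v)^2 - 2(n-1)$, which you could use in place of your inequality.

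There is, however, a quantitative slip in the final step. You claim that $\tfrac{1}{n}\E\big[\sum_{v=1}^m \deg(v)^2\big] \geq c' m$ with $c'$ usable as $m \to \infty$, but this is not right: for a vertex born at step $v$ in the $a=0$ model, a direct urn/martingale computation gives $\E[D(v,n)^2] \asymp n/v$, not $\asymp n$ uniformly. Thus your constant $c_m = \min_{v \le m} \liminf_n \E[D(v,n)^2]/n$ behaves like $1/m$, and $c_m \cdot m$ stays bounded---exactly the uniformity obstacle you flagged but did not resolve. The fix is simply not to take the minimum: summing the actual per-vertex contributions gives
\[
\frac{1}{n}\sum_{v=1}^m \E[D(v,n)^2] \;\asymp\; \sum_{v=1}^m \frac{1}{v} \;\asymp\; \log m,
\]
which still diverges as $m \to \infty$ (or, more directly, $\tfrac{1}{n}\E[\Tr(A_n^4)] \asymp \log n$). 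So your approach goes through once you replace the claimed linear growth in $m$ by the correct logarithmic one.
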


\subsection{The proportion of zero eigenvalues and maximal matchings}

Part (b) and (c) of Theorem~\ref{theo:esd}
show that the limiting spectral distribution $F$ will typically
have many atoms.  However, Theorem~\ref{theo:esd}(c)
provides a rather crude lower bounds on the mass of each atom.
We obtain better lower bounds on the limiting proportion of
zero eigenvalues in Subsection~\ref{number_zero_eigenvalues}.
The key tool we use is the intimate connection we recall
in Subsection~\ref{comb_prelims} between
the number of zero eigenvalues of the adjacency matrix
of a tree and {\em maximal matchings} on the tree --
a notion that we now review briefly.

Suppose that $G$ is a graph with vertex set $V$
and edge set $E$ and for each edge $e \in E$ there is a corresponding
{\em weight} $w(e)$.  Recall that a {\em matching} of $G$ is a subset
of $S \subseteq E$ such that no two edges in $S$ share a common vertex.
A matching $S^*$ is {\em maximal} for the system of weights $\{w(e) : e \in E\}$ if
$\sum_{e \in S^*} w(e) \ge \sum_{e \in S} w(e)$ for any other matching $S$.
There may be several maximal matchings but the total weight $\sum_{e \in S^*} w(e)$
is, of course, the same for all of them.  When no weights
are mentioned explicitly, they are assumed to be all $1$,
and the total weight of a maximal matching in this case
is just the maximal possible cardinality of a matching.

Although we only need the case when all the weights are $1$ to investigate
the proportion of zero eigenvalues, our methods establish the following
more general result without much further effort.

\begin{Theorem}
\label{theo:max-matching}
Consider a sequence $(\TT_n)_{n=1}^\infty$ 
of random trees that converge in the probability fringe sense 
to a random {\tt sin}-tree $\TT = (\TT^0, \TT^1, \ldots)$. 
Write $M_n$ for the number of vertices
of $\TT_n$ and
$M(\TT_n)$ for the total weight of a maximal matching on $\TT_n$
when the associated system of edge weights
is a collection of independent and identically distributed $\mathbb{R}_+$-valued
random variables $X_n(e)$
with a common distribution $\nu$ that has finite expected value.   
Then,
$M_n^{-1} M(\TT_n)$ converges in distribution to a (model
dependent) constant $c_{\TT,\nu}$ as $n \rightarrow \infty$.
\end{Theorem}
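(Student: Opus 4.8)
The plan is to combine the interlacing-type edge-deletion idea already used for the spectral distribution with the general theory of additive functionals of trees under probability fringe convergence, together with a monotonicity/subadditivity argument to control the boundary effects of cutting. First I would record the elementary local properties of maximal matchings: if a forest is obtained from a tree by deleting an edge, then the total weight of a maximal matching changes by at most the weight of the deleted edge (and is monotone in the obvious directions), and the total weight of a maximal matching on a forest is the sum of the total weights over its components. This makes $M(\cdot)$ an ``almost additive'' functional, with error controlled by the weights of a small set of cut edges.

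Next, following the strategy behind Theorem~\ref{theo:esd}, I would fix a large truncation depth $h$ and delete from $\TT_n$ the edges lying at depths that are multiples of $h$ along the semi-infinite spine directions picked out by the fringe decomposition $\Phi(\TT_n,\cdot)$; more precisely, for each vertex I look at the subtree hanging below it and, at appropriately chosen ``levels'', cut the edge connecting that subtree to its parent. The point of probability fringe convergence is exactly that the empirical distribution of these bounded-depth fringe subtrees, decorated with their i.i.d.\ edge weights $X_n(e)$, converges in distribution to the corresponding decorated fringe subtrees of the limiting {\tt sin}-tree $\TT$. Hence $M_n^{-1}$ times the sum of $M(\cdot)$ over the resulting small trees converges in probability to the expectation, under the fringe limit law, of the (random but integrable, since $\nu$ has finite mean) normalized maximal-matching weight of a truncated fringe subtree; call this $c^{(h)}_{\TT,\nu}$. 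The almost-additivity bound shows that this differs from $M_n^{-1} M(\TT_n)$ by at most $M_n^{-1}$ times the sum of the weights of the cut edges, which is a vanishing-density set of edges, so by the i.i.d.\ finite-mean assumption and a weak law of large numbers this error is $O(1/h)$ uniformly in $n$ in probability; letting $h \to \infty$ along the $c^{(h)}_{\TT,\nu}$ (which form a Cauchy sequence by the same bound) identifies the limit $c_{\TT,\nu}$ and gives convergence in distribution (indeed in probability) to this constant.

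I expect two places to require care. The genuinely delicate point is the interchange of limits: I need the boundary error from cutting to be small \emph{uniformly in $n$}, and for an \emph{unbounded} family of edge weights with only a first moment one cannot bound weights almost surely, so the control of $M_n^{-1}\sum_{e \text{ cut}} X_n(e)$ must go through a law of large numbers / uniform integrability argument rather than a crude sup bound — this is precisely the step that extends the Aldous--Steele result beyond the uniform random tree, where extra structure was available. The second, more routine, point is checking that the fringe-level cutting scheme really does remove only a density-$O(1/h)$ fraction of edges and that the pieces it produces are measurable functions of bounded-depth fringe data, so that probability fringe convergence applies verbatim; here one has to be slightly careful because the natural object in Definition~\ref{def:prob-fringe} is the subtree below a vertex together with finitely many ancestor-subtrees, and one should phrase the decomposition so that each small tree, with its weights, is one such bounded-complexity gadget. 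Assembling these ingredients yields the stated convergence, with $c_{\TT,\nu} = \lim_{h\to\infty} c^{(h)}_{\TT,\nu}$.
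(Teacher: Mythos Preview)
Your overall strategy---cut $\TT_n$ into a forest of small pieces, use the additivity of $M(\cdot)$ over components together with the sandwich $M(\UU_n) \le M(\TT_n) \le M(\UU_n) + \sum_{e \text{ cut}} X_n(e)$, invoke fringe convergence on the pieces, and let the truncation parameter go to infinity---is exactly the paper's approach. The two ``delicate points'' you flag are also the right ones, and the paper handles them just as you anticipate.

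Where your sketch diverges is in the cutting rule, and this is where you should be more careful. You describe cutting ``at depths that are multiples of $h$ along the semi-infinite spine directions picked out by $\Phi(\TT_n,\cdot)$'', but $\TT_n$ is a finite tree with no semi-infinite spine; the spine in $\Phi(\TT_n,\cdot)$ is the path from a uniformly chosen vertex to the root, and that is a different object for every vertex. A depth-from-the-root cut is not a locally determined property of a vertex (it depends on global position), so probability fringe convergence in the sense of Definition~\ref{def:prob-fringe} does not apply to it directly. The paper instead cuts by \emph{subtree size}: fix $K$, let $W_n$ be the set of vertices whose subtree has at most $K$ vertices, and take $\UU_n$ to be the forest induced on $W_n$. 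A vertex $v$ is then a component root of $\UU_n$ precisely when $\#t_0 \le K$ and $\#t_0 + \#t_1 > K$ in its fringe decomposition, which is a function of $(t_0,t_1)$ alone; this is what makes the empirical distribution of component shapes converge via fringe convergence. The cut-edge density is then $\Pr\{\#\TT^0 > K\} + \Pr\{\#\TT^0 \le K,\, \#\TT^0 + \#\TT^1 > K\}$, both of which vanish as $K\to\infty$.

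One further simplification in the paper worth noting: rather than extending fringe convergence to edge-weighted trees as you propose, the paper first uses fringe convergence for the (finite, unweighted) component shapes alone, and then handles the weights by a conditional law of large numbers (their Lemma~\ref{lemma:categories}): given the finite list of shapes, the per-component maximal-matching weights are conditionally independent with finitely many distribution types, so the normalized sum converges to the appropriate mixture of expectations. This sidesteps any need to topologize weighted trees.
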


Using their {\em objective method}, Aldous and Steele \cite{aldous-obj} show
that $M_n^{-1}\expec[ M(\TT_n)]$ converges in the case of the ensemble
of uniform random trees.  Moreover, they characterize the limit in terms of the fixed
points of certain distributional identities.

\subsection{Isospectrality}

A result of Schwenk \cite{MR0384582} states that
the probability the adjacency matrix of
a realization of the uniform random tree
has the same spectrum as some other (non-isomorphic) tree
converges to one as the number of vertices goes to infinity.
Schwenk's method was developed further in \cite{MR1231010}. 
The key idea is to first establish that a certain
pair of non-isomorphic
finite rooted trees $\mathbf{t}_1$ and $\mathbf{t}_2$ with
the same number of vertices have the following 
{\em exchange property}: If $\mathbf{t}'$ is any finite
rooted tree with
$\mathbf{t}_1$ as a subtree, then replacing $\mathbf{t}_1$
by $\mathbf{t}_2$ produces a tree $\mathbf{t}''$
with the same adjacency matrix spectrum as that of $\mathbf{t}'$.
If one can then show that a given sequence $(\TT_n)_{n=1}^\infty$
is such that 
$\prob\{\mathbf{t}_1 \text{ is a subtree of } \TT_n\} \rightarrow 1$
as $n \rightarrow \infty$, then 
$\prob\{\TT_n \text{ shares its spectrum with another tree}\} \rightarrow 1$
as $n \rightarrow \infty$.  Pairs of trees with exchange property
are exhibited in \cite{MR0384582, MR1231010}.  Pairs of binary trees
(that is, every non-leaf vertex has out-degree $2$) 
with the exchange
property are found in \cite{q-bio.PE/0512010}.  The following
result is sufficiently obvious that we will not provide a proof.
It applies to all four of the models in Section~\ref{sec:tree-models},
with the pair $\mathbf{t}_1, \mathbf{t}_2$ being, for example,
the binary trees in \cite{q-bio.PE/0512010}.

\begin{Proposition}
\label{P:isospectral}
Consider a sequence $(\TT_n)_{n=1}^\infty$ of 
random finite unlabeled rooted trees that converges in 
the probability fringe sense to a {\tt sin}-tree 
$\TT = (\TT^0, \TT^1, \ldots)$.  
Suppose for some pair $\mathbf{t}_1, \mathbf{t}_2 \in \T$
with the exchange property that 
$\prob\{ \TT^0 = \mathbf{t}_1 \} > 0$.
Then, 
\[
\lim_{n \rightarrow \infty}
\prob\{\TT_n \text{ shares its spectrum with another tree}\}
= 1.
\]
\end{Proposition}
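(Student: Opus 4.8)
The plan is to combine the exchange property of the pair $\mathbf{t}_1, \mathbf{t}_2$ with the probability fringe convergence hypothesis, the only real content being a bridge from ``$\TT^0 = \mathbf{t}_1$ occurs with positive probability in the limit'' to ``$\mathbf{t}_1$ appears as a subtree of $\TT_n$ with probability tending to one.'' First I would recall the exchange property precisely: if $\mathbf{t}_1$ occurs as a (rooted) subtree hanging below some vertex of a finite tree $\mathbf{t}'$, then surgically replacing that copy of $\mathbf{t}_1$ with $\mathbf{t}_2$ yields a tree $\mathbf{t}''$ that is non-isomorphic to $\mathbf{t}'$ (since $\mathbf{t}_1 \not\cong \mathbf{t}_2$ and they have the same size, so vertex counts match but the trees differ) yet has the same adjacency spectrum. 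Hence it suffices to show
\[
\lim_{n \to \infty} \prob\{\mathbf{t}_1 \text{ occurs as a fringe subtree of } \TT_n\} = 1.
\]

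Next I would extract this from Definition~\ref{def:prob-fringe}. The fringe distribution $\Phi(\TT_n, \cdot)$ on $\T_\ast^\infty$ records, for a uniformly chosen vertex $v$ of $\TT_n$, the subtree $t_0$ below $v$ together with the subtrees off the ancestral path. Probability fringe convergence says $\Phi(\TT_n,\cdot) \to \mathrm{law}(\TT)$ weakly (as random measures). Projecting onto the first coordinate, the random probability that a uniformly chosen vertex of $\TT_n$ has fringe subtree exactly $\mathbf{t}_1$ converges in distribution to the constant $p := \prob\{\TT^0 = \mathbf{t}_1\} > 0$. In particular this empirical proportion is at least $p/2$ with probability tending to one; since $\TT_n$ has at least one vertex, this forces at least one vertex of $\TT_n$ to carry $\mathbf{t}_1$ as its fringe subtree, with probability tending to one. (Even more simply: the event ``some vertex has fringe subtree $\mathbf{t}_1$'' is the event that the open set $\{\mu : \mu(\{\mathbf{t}_1\} \times \T_\ast^{\infty}) > 0\}$ is hit, and weak convergence plus $p>0$ gives $\liminf_n \prob\{\cdot\} \ge \prob\{\mathrm{law}(\TT) \in \text{that set}\}$; but the law of $\TT$ is a point mass, which lies in that set, so the liminf is $1$.)

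Combining the two steps: with probability tending to one $\TT_n$ contains a fringe copy of $\mathbf{t}_1$, and on that event the exchange property produces a non-isomorphic tree $\mathbf{t}''$ with the same spectrum as $\TT_n$, so $\TT_n$ shares its spectrum with another tree. This yields the claimed limit. I do not anticipate a genuine obstacle here — the authors themselves flag it as ``sufficiently obvious''; the one point requiring slight care is that ``fringe subtree equals $\mathbf{t}_1$'' must be phrased so that the surgery of the exchange property applies (i.e. $\mathbf{t}_1$ sits below a vertex as an honest rooted subtree, which is exactly what $t_0$ is in the definition of $\Phi$), and that $\mathbf{t}''$ is genuinely non-isomorphic to $\TT_n$, which follows from $\mathbf{t}_1 \not\cong \mathbf{t}_2$ having equal vertex counts together with the fact that the rest of the tree is left untouched.
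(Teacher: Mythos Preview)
Your proposal is correct and follows exactly the argument the paper sketches in the paragraph preceding the proposition: use probability fringe convergence to guarantee that $\mathbf{t}_1$ appears as a fringe subtree with probability tending to one, then invoke the exchange property to produce a cospectral non-isomorphic mate. The paper explicitly declines to give a proof, deeming the result ``sufficiently obvious,'' so there is nothing further to compare; your write-up simply fleshes out what the authors leave implicit. The only point you treat somewhat tersely is the non-isomorphism of $\TT_n$ and $\mathbf{t}''$, but your hint is the right one: since $|\mathbf{t}_1|=|\mathbf{t}_2|$ and $\mathbf{t}_1\not\cong\mathbf{t}_2$, the number of limbs isomorphic to $\mathbf{t}_1$ (an unrooted isomorphism invariant) drops by exactly one under the swap once $\#\TT_n>2\,\#\mathbf{t}_1$, which probability fringe convergence guarantees eventually.
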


\subsection{Largest eigenvalues and largest degrees}

The following result is proved in \cite{MR2166274, MR2080798}
in the case $a=0$.  The proof extends readily to general $a > -1$. 

\begin{Theorem}
Let $(\TT_n)_{n=1}^\infty$ be the ensemble of linear  preferential attachment trees.
Fix any $k\geq 1$. Write  $\lambda_{n,1} \geq \lambda_{n,2} \geq \ldots \ge \lambda_{n,k}$ 
for the $k$ largest eigenvalues of the adjacency matrix of $\TT_n$ and denote by
 $\Delta_{n,1} \geq \Delta_{n,2} \ge \ldots \geq \Delta_{n,k}$ 
 the $k$ largest out-degrees of $\TT_n$.
Then, $\lambda_{n,i} / \sqrt{\Delta_{n,i}}$ converges in distribution to
$1$ as $n \rightarrow \infty$ for  $1\leq i\leq k$.
\end{Theorem}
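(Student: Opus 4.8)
The plan is to reduce the problem to a statement about the local structure of $\TT_n$ near its highest-degree vertices, combined with eigenvalue interlacing. First I would recall the lower bound: if a vertex $v$ of $\TT_n$ has out-degree $D$, then $\TT_n$ contains a star $K_{1,D}$ as a subgraph, whose adjacency matrix has largest eigenvalue $\sqrt{D}$; by interlacing (deleting all edges not incident to $v$, which only decreases eigenvalues in the sense relevant here, or more precisely by the variational characterization applied to the indicator-of-the-star test vector) one gets $\lambda_{n,1} \geq \sqrt{\Delta_{n,1}}$, and more generally $\lambda_{n,i} \geq \sqrt{\Delta_{n,i}}$ after one checks that the stars around the $i$ largest-degree vertices are vertex-disjoint with probability tending to one (this follows from probability fringe convergence, since the top-degree vertices are spread out and their neighbourhoods don't overlap asymptotically). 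So the content is the matching upper bound $\lambda_{n,i} \leq (1+o(1))\sqrt{\Delta_{n,i}}$.

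For the upper bound, the key point is that the high-degree vertices of a preferential attachment tree are ``locally sparse'': around each of the top $k$ degree vertices, after removing that vertex, the tree breaks into many components each of which is small — more precisely, for the linear preferential attachment tree the subtrees hanging off the children of a high-degree vertex have bounded size with high probability, and the high-degree vertices themselves are at large graph distance from one another. I would make this precise using the continuous-time branching process embedding of Proposition~\ref{prop:various-embedding}(b) together with the Malthusian growth rate $\gamma_a = a+2$: the vertices born early have the largest out-degrees, their out-degrees grow like $n^{1/\gamma_a}$ (this is essentially the content of \cite{mori-max} and the references in the final subsection), while any vertex adjacent to two of the top-$k$ vertices, or any large subtree hanging off a child of a top-$k$ vertex, would be a rare event. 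Given this structural description, I would delete a small number of edges — namely the edges at distance $2$ from each of the top $k$ vertices — to split $\TT_n$ into (i) a bounded-diameter neighbourhood of each top-degree vertex, essentially a ``spider'' of stars, plus (ii) a remainder forest. Interlacing then gives $\lambda_{n,i} \leq \lambda_{n,i}(\text{neighbourhood pieces}) + (\text{number of deleted edges})$, and one estimates the eigenvalues of a spider/broom graph explicitly: the top eigenvalue of a star $K_{1,D}$ with short paths attached to some leaves is still $\sqrt{D}(1+o(1))$ as $D\to\infty$.

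The main obstacle, I expect, is controlling the contribution of the subtrees hanging off the neighbours of the high-degree vertices — these are not bounded in size, and in fact Remark~\ref{rem:moment_div}/Lemma~\ref{lemma:inf-fourth} warns us that global moment bounds blow up precisely because of these structures. So a crude ``delete everything but the stars'' bound via interlacing will lose too much: deleting $\Theta(n)$ edges shifts eigenvalues by too much to be useful directly. The fix is to be surgical: one should only need to delete $o(\sqrt{\Delta_{n,k}})$ edges to isolate the relevant neighbourhoods, using that the top out-degrees are $\Theta(n^{1/\gamma_a})$ and grow, whereas the number of edges that need cutting to separate the $k$ highest-degree vertices from each other and from their ``long'' hanging subtrees is $O(1)$ in probability (each top vertex has only $O(1)$ children whose own subtree is large, again by the branching-process picture and a first-moment estimate on the number of descendants). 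Then the perturbation to the top $k$ eigenvalues is $O(1) = o(\sqrt{\Delta_{n,i}})$, and combined with the explicit spectral computation for brooms this yields $\lambda_{n,i} = \sqrt{\Delta_{n,i}}(1+o(1))$. Throughout, probability fringe convergence is what licenses the claim that the relevant local events (disjointness of neighbourhoods, boundedness of the number of large hanging subtrees) hold with probability tending to one, while the quantitative growth rate $n^{1/\gamma_a}$ of the extreme out-degrees is what makes the $O(1)$ edge deletions negligible.
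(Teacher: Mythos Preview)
The paper does not actually prove this theorem: immediately before the statement it says ``The following result is proved in \cite{MR2166274, MR2080798} in the case $a=0$.  The proof extends readily to general $a > -1$.''  The paper's own contribution in this section is the complementary Theorem~\ref{theo:max-eigen} on the joint limit of the rescaled top degrees (and hence eigenvalues), for which it develops the branching-process machinery of Lemmas~\ref{lemma:ld-yule}--\ref{lemma:nbhd-equal-k}.  So there is no ``paper's own proof'' to compare against for this particular statement.

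That said, your outline is in the right spirit and is close to what Flaxman--Frieze--Fenner actually do, but one point needs correcting.  You repeatedly invoke probability fringe convergence to control the local structure near the top-$k$ degree vertices: to argue their neighbourhoods are disjoint, that only $O(1)$ of their children have large subtrees, etc.  This is the wrong tool.  Probability fringe convergence (Definition~\ref{def:prob-fringe}) describes the subtree below a \emph{uniformly chosen} vertex; it says nothing directly about the atypical, early-born vertices that carry the extreme degrees.  The correct input is the continuous-time branching process embedding itself, as used in the proof of Theorem~\ref{theo:max-eigen}: Lemma~\ref{L:root_biggest_degree} and Lemma~\ref{lemma:nbhd-equal-k} show that the top-$k$ degrees live among vertices born before some fixed time $S_\eps$, and Proposition~\ref{proposition:local_max_degree} gives the asymptotic size of those degrees.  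The structural facts you need about neighbourhoods of early vertices (e.g.\ that their children, being born later, have degrees of strictly smaller order) come from this embedding and the growth estimates of Theorem~\ref{theo:off-point}, not from fringe convergence.  Once you replace the fringe-convergence appeals with these branching-process estimates, your star-lower-bound plus surgical-interlacing-upper-bound strategy is essentially what is carried out in the cited references.
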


We complement this result by establishing the following theorem.
Recall that the linear preferential attachment model depends on a parameter $a > -1$.  
Define the corresponding {\em Malthusian parameter} by
\begin{equation}
\label{eqn:malthus}
\gamma_a := a+2.
\end{equation}

\begin{Theorem}  
\label{theo:max-eigen}
There exist random variables $X_1 \geq X_2 \geq \cdots X_k > 0$ that such that 
\[
\left(\frac{\Delta_{n,1}}{n^{1/\gamma_a}}, \frac{\Delta_{n,2}}{n^{1/\gamma_a}},\ldots, \frac{\Delta_{n,k}}{n^{1/\gamma_a}}\right)
\] 
converges in distribution to $(X_1,X_2, \ldots, X_k)$ as $n \rightarrow \infty$.
Hence, 
\[
\left(\frac{\lambda_{n,1}}{n^{1/2\gamma_a}}, \frac{\lambda_{n,2}}{n^{1/2\gamma_a}},\ldots, \frac{\lambda_{n,k}}{n^{1/2\gamma_a}}\right)
\]
converges in distribution to $(\sqrt{X_1},\sqrt{X_2}, \ldots, \sqrt{X_k})$
as $n \rightarrow \infty$.
\end{Theorem}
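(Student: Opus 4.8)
The plan is to exploit the continuous-time branching process embedding of the linear preferential attachment tree from Proposition~\ref{prop:various-embedding}(b) together with the classical Malthusian/Nerman asymptotics for supercritical CTBPs. Write $\FF(t)$ for the family tree of the embedded process, so that $\TT_n \ed \FF(T_n)$ with $T_n = \inf\{t : \#\FF(t) = n\}$. The first step is to recall the pure-birth dynamics: an individual that has already produced $k$ children spawns the next at rate $k+1+a$, so its offspring point process has Laplace transform computable from the rates $j+1+a$, $j \ge 0$. A standard computation identifies the Malthusian parameter as the unique $\gamma > 0$ solving the Euler--Lotka equation, and one checks this is exactly $\gamma_a = a+2$; the total population satisfies $e^{-\gamma_a t}\#\FF(t) \to W$ a.s.\ for a strictly positive random variable $W$ (Nerman's theorem, e.g.\ \cite{MR1014449, nerman-jagers}), whence $T_n = \gamma_a^{-1}\log n - \gamma_a^{-1}\log W + o(1)$, i.e.\ $n e^{-\gamma_a T_n} \to W$.

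The second step is to track the out-degree of a fixed individual. For the individual $v_j$ born $j$-th (in birth order), let $D_j(t)$ be its number of children at time $t$; given its birth time $\sigma_j$, the process $(D_j(\sigma_j + s))_{s \ge 0}$ is the pure-birth chain with rates $k+1+a$ started at $0$, which is a standard Yule-type explosion-free chain whose growth is governed by the same Malthusian exponent: $e^{-\gamma_a s} D_j(\sigma_j + s)$ converges a.s.\ as $s \to \infty$. More to the point, the whole configuration of out-degrees in $\FF(t)$, rescaled by $e^{-\gamma_a t}$, converges: by the general convergence theory for CTBPs counted with characteristics (take the characteristic $\phi_k(u) = \mathbf 1\{u \ge 0, \text{individual has} \ge k \text{ children at age } u\}$ etc.), the point process $\sum_v \delta_{e^{-\gamma_a t} D(v,t)}$ on $(0,\infty)$ converges in distribution to a limiting point process $\Xi$, whose points are $W$ times the points of a canonical mixed-Poisson-type limit. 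In particular the $k$ largest points, say $Y_1 \ge \cdots \ge Y_k > 0$, converge jointly in distribution. Combining with $n e^{-\gamma_a T_n} \to W$ and $\TT_n \ed \FF(T_n)$ gives
\[
\left(\frac{\Delta_{n,1}}{n^{1/\gamma_a}}, \ldots, \frac{\Delta_{n,k}}{n^{1/\gamma_a}}\right)
= \left(\frac{e^{-\gamma_a T_n}\Delta_{n,1}}{(n e^{-\gamma_a T_n})^{1/\gamma_a}} \cdot (\cdots), \ldots\right) \cd (X_1, \ldots, X_k),
\]
where $X_i = Y_i / W^{1/\gamma_a}$; one keeps the bookkeeping honest by writing $\Delta_{n,i}/n^{1/\gamma_a} = (e^{-\gamma_a T_n}\Delta_{n,i}) \cdot (e^{\gamma_a T_n}/n)^{1/\gamma_a}$ and applying the joint convergence of the two factors via Slutsky. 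Strict positivity of the $X_i$ comes from $W>0$ a.s.\ and the fact that the limiting out-degree point process has infinitely many points a.s.

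The third step transfers this to eigenvalues. Here I would invoke the already-cited results of \cite{MR2166274, MR2080798} (the Theorem immediately preceding this one), which give $\lambda_{n,i}/\sqrt{\Delta_{n,i}} \cp 1$ for each fixed $i \le k$. Writing $\lambda_{n,i}/n^{1/2\gamma_a} = (\lambda_{n,i}/\sqrt{\Delta_{n,i}}) \cdot \sqrt{\Delta_{n,i}/n^{1/\gamma_a}}$ and applying the continuous mapping theorem together with Slutsky yields joint convergence of $(\lambda_{n,i}/n^{1/2\gamma_a})_{i=1}^k$ to $(\sqrt{X_i})_{i=1}^k$, which is the claim. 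The main obstacle is the second step: establishing \emph{joint} convergence of the top $k$ rescaled out-degrees as a point-process limit (not merely the maximum, for which \cite{mori-max} gives a martingale argument that does not obviously extend). The cleanest route is to set this up inside the general framework of branching processes counted by random characteristics, verify the (mild) integrability hypotheses for the relevant characteristics, and read off convergence of the associated empirical measure of rescaled degrees together with its extreme points; some care is needed because the characteristic encoding "$\ge k$ children" is not bounded in a way that makes the standard $L^1$ conditions automatic, so one truncates and controls the tail via the explicit exponential rates $j+1+a$.
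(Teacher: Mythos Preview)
Your first and third steps are correct, but step two contains both a scaling error and a genuine gap. The individual offspring process $N_a$ (rates $k+1+a$ in state $k$) satisfies $e^{-s}N_a(s)\to W_a>0$ a.s., \emph{not} $e^{-\gamma_a s}N_a(s)$: the Malthusian rate $\gamma_a=a+2$ governs the growth of the \emph{population} $\#\FF(t)$, whereas a single individual's out-degree grows like $e^s$ (check that $e^{-s}(N_a(s)+1+a)$ is a martingale). With your scaling every point of the proposed measure $\sum_v\delta_{e^{-\gamma_a t}D(v,t)}$ tends to $0$, so the asserted point-process convergence is vacuous, and the displayed identity $\Delta_{n,i}/n^{1/\gamma_a}=(e^{-\gamma_a T_n}\Delta_{n,i})\cdot(e^{\gamma_a T_n}/n)^{1/\gamma_a}$ is simply false. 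The correct bookkeeping is $\Delta_{n,i}/n^{1/\gamma_a}=(e^{-T_n}\Delta_{n,i})\cdot(e^{T_n}/n^{1/\gamma_a})$, both factors converging because $T_n-\gamma_a^{-1}\log n\to-\log Z_a$ a.s.

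More seriously, even with the scaling fixed, the Jagers--Nerman ``branching processes counted with characteristics'' machinery yields limits for \emph{averages} of characteristics over the population (empirical proportions), not for extremes; you cannot read off joint convergence of the top $k$ rescaled degrees from it, and you yourself flag this as the main obstacle without resolving it. The paper's key idea, which your proposal lacks, is a \emph{localization} argument: for every $\eps>0$ there is a fixed time $S_\eps$ such that, with probability at least $1-\eps$ uniformly in $n$, the $k$ largest out-degrees of $\TT_n$ all belong to vertices born before time $S_\eps$. This is established by bounding the root's degree at time $T_n$ below by $K_\eps n^{1/\gamma_a}$ with high probability, and then controlling late-born vertices via a union bound over the $O(e^{\gamma_a m})$ individuals born in each interval $[m,m+1]$ together with the tail estimate $\prob\{N_a(t)>Ke^t\}\le Ae^{-K/A}$. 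Once only finitely many vertices matter, each $n^{-1/\gamma_a}D(v,n)$ converges a.s.\ to a strictly positive limit by the single-individual convergence plus the $T_n$ asymptotics, and a Cauchy-in-probability argument glues the approximations together across $\eps$.
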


\section{Convergence of spectral distributions} 

\subsection{Interlacing inequalities and some of their consequences}

Suppose that $A$ is an $m \times m$ Hermitian matrix
and $B$ is an $n \times n$ principal sub-matrix
of $A$ for $1 \le n \le m$ (that is, $B$ is
formed by deleting $m-n$ rows and columns of $A$
with the same indices).

Write $\mu_1 \le \ldots \le \mu_m$ for the eigenvalues of $A$
and   $\nu_1 \le \ldots \le \nu_n$ for the eigenvalues of $B$.
The interlacing theorem (see, for example, \cite{MR1084815}) gives
that $\mu_k \le \nu_k \le \mu_{k + m - n}$ for $1 \le k \le n$.

Write 
$P := \frac{1}{m}\sum_{i=1}^m \delta_{\mu_i}$ 
for the spectral distribution of $A$ and
$Q := \frac{1}{n}\sum_{i=1}^n \delta_{\nu_i}$ 
for the spectral distribution of $B$. 

We wish to compare $P$ and $Q$. To this end, we
recall that the L\'evy distance between two
probability measures $\sigma$ and $\tau$ on $\mathbb{R}$ 
is given by 
\[
d(\sigma, \tau) := 
\inf\{\eps > 0: S(x-\eps) - \eps < T(x) < S(x+\eps) + \eps, \; \forall x \in \mathbb{R}\},
\]
where $S$ and $T$ are the cumulative distribution functions of 
$\sigma$ and $\tau$, respectively
-- see, for example, \cite{Zol01}.  The L\'evy distance is
a metric that metrizes weak convergence of probability measures
on $\mathbb{R}$,
and the space of probability measures on $\mathbb{R}$ is complete 
with respect to this metric.

We collect several simple facts in the following proposition.

\begin{Proposition}
\label{P:submatrix_bound}
In the above notation,
\begin{itemize}
\item[(a)]  $d(P,Q) \le (\frac{m}{n} - 1) \wedge 1$.
\item[(b)] Consider a sequence  $(A_k)_{k=1}^\infty$  of Hermitian
matrices, with $(A_k)_{k=1}^\infty$ being $m_k \times m_k$ and having
spectral distribution $P_k$.  For each $\eps > 0$, let
$(B_k^\eps)_{k=1}^\infty$  be such that
$B_k^\eps$ is an
$n_k^\eps \times n_k^\eps$ principal sub-matrix of $A_k$
with spectral distribution $Q_k^\eps$. Suppose for
every $\eps > 0$ that
$Q_\infty^\eps = \lim_{k \rightarrow \infty} Q_k^\eps$ exists and
$\limsup_{k \rightarrow \infty} m_k / n_k^\eps \le 1 + \eps$.
Then, $P_\infty = \lim_{k \rightarrow \infty} P_k$ exists and
is given by 
$P_\infty = \lim_{\eps \downarrow 0}  Q_\infty^\eps$.
\item[(c)] For each $\gamma \in \mathbb{R}$,
\[
\begin{split}
& \left |\#\{1 \le k \le m : \mu_k = \gamma\}
- \#\{1 \le k \le n : \nu_k = \gamma\} \right | \\
& \quad =
\left |m P(\{\gamma\}) - n Q(\{\gamma\}) \right | \\
& \quad \le (m-n) \\
\end{split}
\]

\item[(d)]  Let   $(A_k)_{k=1}^\infty , (B_k^\eps)_{k=1}^\infty, m_k$ and $n_k^\eps$ be as in part (b). Suppose for some
fixed $\gamma \in \mathbb{R}$ that for
every $\eps > 0$ the limit
$\lim_{k \rightarrow \infty} Q_k^\eps(\{\gamma\})$ exists and
$\limsup_{k \rightarrow \infty} m_k / n_k^\eps \le 1 + \eps$.
Then, $\lim_{k \rightarrow \infty} P_k(\{\gamma\})$
exists and is given by 
$\lim_{\eps \downarrow 0} \lim_{k \rightarrow \infty} Q_k^\eps(\{\gamma\})$.
\end{itemize}
\end{Proposition}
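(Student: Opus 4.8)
The plan is to prove the four items of Proposition~\ref{P:submatrix_bound} in order, each reducing to the interlacing inequality $\mu_k \le \nu_k \le \mu_{k+m-n}$.

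For part (a), I would work with the cumulative distribution functions $S(x) = \frac{1}{m}\#\{k : \mu_k \le x\}$ and $T(x) = \frac{1}{n}\#\{k : \nu_k \le x\}$. From $\nu_k \le \mu_{k+m-n}$ one gets, for any $x$, that $\#\{k \le n : \nu_k \le x\} \ge \#\{k \le m : \mu_k \le x\} - (m-n)$, and from $\mu_k \le \nu_k$ that $\#\{k \le n : \nu_k \le x\} \le \#\{k \le m : \mu_k \le x\}$. Dividing through, this yields $S(x) - (\frac{m}{n}-1) \le \frac{m}{n}S(x) - \frac{m-n}{n} \le T(x) \le \frac{m}{n}S(x)$. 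Since $0 \le S(x) \le 1$, the upper bound gives $T(x) \le S(x) + (\frac{m}{n}-1)$, and combining with the lower bound, $|T(x) - S(x)| \le \frac{m}{n}-1$; it is also trivially $\le 1$ since both are in $[0,1]$. Taking $\eps = (\frac{m}{n}-1)\wedge 1$ in the definition of the L\'evy distance (one can even drop the $\pm\eps$ shifts in the argument of $S$) gives $d(P,Q) \le (\frac{m}{n}-1)\wedge 1$.

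For part (b), the hypothesis $\limsup_k m_k/n_k^\eps \le 1+\eps$ together with part (a) gives $\limsup_k d(P_k, Q_k^\eps) \le \eps$. Since $Q_k^\eps \to Q_\infty^\eps$, we get $\limsup_k d(P_k, Q_\infty^\eps) \le \eps$; in particular the sequence $(P_k)$ is ``almost Cauchy'' and, using completeness of the space of probability measures on $\mathbb{R}$ under the L\'evy metric, a standard $\eps$-argument shows $(Q_\infty^\eps)_{\eps > 0}$ is Cauchy as $\eps \downarrow 0$, its limit $P_\infty$ exists, and $\lim_k P_k = P_\infty$. Part (c) is the atom-counting analogue: writing $N_A(\gamma) = \#\{k \le m : \mu_k = \gamma\}$, interlacing $\mu_k \le \nu_k \le \mu_{k+m-n}$ shows each eigenvalue of $B$ equal to $\gamma$ is ``sandwiched'', and a direct count of how many indices $k$ can have $\nu_k = \gamma$ versus $\mu_k = \gamma$ gives $|N_A(\gamma) - N_B(\gamma)| \le m-n$; dividing by the appropriate denominators and using $N_A(\gamma) = mP(\{\gamma\})$, $N_B(\gamma) = nQ(\{\gamma\})$ gives the displayed chain. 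Finally, part (d) follows from (c) exactly as (b) follows from (a): (c) gives $|m_k P_k(\{\gamma\}) - n_k^\eps Q_k^\eps(\{\gamma\})| \le m_k - n_k^\eps$, so $|P_k(\{\gamma\}) - Q_k^\eps(\{\gamma\})| \le (m_k/n_k^\eps - 1)Q_k^\eps(\{\gamma\}) + (m_k/n_k^\eps - 1) \le 2(m_k/n_k^\eps - 1)$, whence $\limsup_k |P_k(\{\gamma\}) - \lim_k Q_k^\eps(\{\gamma\})| \le 2\eps$, and a Cauchy argument in $\eps$ (this time in $\mathbb{R}$, which is complete) produces the double limit.

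None of the steps is genuinely hard; the only point requiring care is the bookkeeping with the index shifts in part (c)—making sure the bound is exactly $m-n$ and not, say, $2(m-n)$—and being careful in parts (b) and (d) that the Cauchy-in-$\eps$ argument is set up correctly (one needs that for $\eps' < \eps$, $d(Q_\infty^{\eps'}, Q_\infty^{\eps}) \le d(Q_\infty^{\eps'}, P_k) + d(P_k, Q_\infty^{\eps})$ for large $k$, which is bounded by $\eps' + \eps \le 2\eps$). I expect the index-shift bookkeeping in part (c) to be the main, though modest, obstacle.
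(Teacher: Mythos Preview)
Your proposal is correct and follows essentially the same approach as the paper: both reduce everything to the interlacing inequalities, bound the Kolmogorov--Smirnov (hence L\'evy) distance in (a), run a triangle-inequality-plus-completeness argument in (b) and (d), and do index bookkeeping in (c). The only cosmetic differences are that the paper proves (a) by reducing via the triangle inequality to the case $n=m-1$ (citing a standard lemma) whereas you bound $|S(x)-T(x)|$ directly for general $n$, and in (b) the paper shows $(P_k)$ is Cauchy first while you show $(Q_\infty^\eps)$ is Cauchy first---but these are the same argument viewed from either end.
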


\begin{proof}
(a) By triangle inequality, it is enough to prove the assertion for $n=m-1$.
It follows immediately from the interlacing inequality (for example, see Lemma~3.3 in \cite{bai}) that the Komogorov-Smirnov distance of $P$ and $Q$ (and hence the L\'evy  distance of $P$ and $Q$) is at most $1/(m-1)$.  

\vspace{4mm}

(b) From Proposition~\ref{P:submatrix_bound}(a),
\[
\begin{split}
\limsup_{k,\ell \rightarrow \infty} d(P_k, P_\ell)
& \le
\limsup_{k \rightarrow \infty} d(P_k, Q_k^\eps) \\
& \quad +
\limsup_{k,\ell \rightarrow \infty} d(Q_k^\eps, Q_\ell^\eps)
+
\limsup_{\ell \rightarrow \infty} d(Q_\ell^\eps, P_\ell) \\
& \le
2 \eps \\
\end{split}
\]
for each $\eps>0$.
The sequence $(P_k)_{k=1}^\infty$ is thus  Cauchy
in the L\'evy metric, and hence it converges weakly to a limit $P_\infty$.

Moreover,
\[
d(P_\infty, Q_\infty^\eps) 
= \lim_{k \rightarrow \infty} d(P_k, Q_k^\eps)
\le \eps,
\]
and so $P_\infty = \lim_{\eps \downarrow 0}  Q_\infty^\eps$.

\vspace{4mm}

(c)
Suppose that $p = \#\{1 \le k \le m : \mu_k = \gamma\}$,
with $\mu_{a+1} = \ldots \mu_{a+p} = \gamma$, and
$q = \#\{1 \le k \le n : \nu_k = \gamma\}$,
with $\nu_{b+1} = \ldots \nu_{b+q} = \gamma$.
It follows from the interlacing inequalities that $\nu_{a+1} \le \mu_{a+1}$,
provided $a+1 \le n$, and $\nu_{a + p - (m-n)} \le \mu_{a+p}$
provided $a + p - (m-n) \ge 1$. Hence, $q \ge p - (m-n)$.
Similarly, $\nu_{b+1} \le \mu_{b+1 + (m-n)}$ and $\mu_{b+q} \le \nu_{b+q}$,
so that $p \ge q - (m-n)$.  Thus, $|p-q| \le (m-n)$, as required.

\vspace{4mm}

(d) From part~(c), 
\[
\left |m_k P_k(\{\gamma\}) - n_k^\eps Q_k^\eps(\{\gamma\})\right |
\le (m_k - n_k^\eps),
\]
and so
\[
\left |P_k(\{\gamma\}) - Q_k^\eps(\{\gamma\}) \right|
\le 
\left(1 - \frac{n_k^\eps}{m_k}\right) 
+ \left (\frac{m_k}{n_k^\eps} - 1 \right).
\]
An argument using completeness similar to that in the proof
of Proposition~\ref{P:submatrix_bound}(b) finishes the proof.

\end{proof}

\begin{Corollary}
\label{C:multiplicity_bound_trees}
Consider a forest $\mathbf{u}$
made up of finitely many
finite unlabeled rooted trees, and assume that some
eigenvalue $\gamma$ of the adjacency matrix of
$\mathbf{u}$ has multiplicity $L$.
Suppose that $A$ is the adjacency matrix of a finite unlabeled
rooted tree $\mathbf{t}$ with $m$ vertices, and suppose that there are $K$ vertices 
$v$ of $\mathbf{t}$ such that the forest formed by deleting
$v$ from the subtree below $v$ produces the forest $\mathbf{u}$.
Then, $\gamma$ is an eigenvalue of the matrix $A$ with multiplicity
at least $KL - m + (m - K) = K(L-1)$.
\end{Corollary}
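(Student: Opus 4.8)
The plan is to use Proposition~\ref{P:submatrix_bound}(c) (the interlacing bound on the change in the multiplicity of a fixed eigenvalue under passage to a principal submatrix) together with the elementary additivity of multiplicities over the connected components of a forest. First I would identify, inside $\mathbf{t}$, the $K$ vertices $v_1,\dots,v_K$ whose subtrees-below, with the vertex itself removed, reproduce the forest $\mathbf{u}$. Since each $v_i$ lies strictly below itself in its own subtree, these $K$ subtrees-below-the-vertex are pairwise vertex-disjoint; deleting the $K$ vertices $v_1,\dots,v_K$ and all edges incident to them from $\mathbf{t}$ therefore yields a forest that contains $K$ disjoint copies of $\mathbf{u}$ (plus possibly further components, which we simply discard or ignore). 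Let $B$ be the principal submatrix of $A$ obtained by restricting to the union of these $K$ disjoint copies of $\mathbf{u}$; it is block diagonal with $K$ blocks each equal to the adjacency matrix of $\mathbf{u}$. Let $n$ denote its size; then $n \le m$ and, crucially, $m - n \le m - K$, because the $K$ rows/columns indexed by $v_1,\dots,v_K$ are among those deleted in forming $B$ (so at most $m-K$ further rows/columns are removed... actually the bound I want is simply that the number of rows deleted is $m-n$, and I will phrase the final count directly in terms of $m-n$ and then use $n \ge K$, which is immediate since each copy of $\mathbf{u}$ has at least one vertex).

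The key steps, in order: (1) Record that $\gamma$ is an eigenvalue of $\mathbf{u}$ with multiplicity exactly $L$, hence of the block-diagonal matrix $B$ with multiplicity exactly $KL$. (2) Apply Proposition~\ref{P:submatrix_bound}(c) with this $A$ and $B$: the multiplicity of $\gamma$ as an eigenvalue of $A$ is at least $KL - (m-n)$. (3) Bound $m - n$: since $B$ accounts for $n$ vertices and $\mathbf{t}$ has $m$, we have $m - n = \#\{\text{deleted vertices}\}$, and I will arrange the submatrix so that $n \ge K$ — in fact one wants the sharper bookkeeping $m - n \le m - K$, i.e.\ $n \ge K$, which holds trivially. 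Plugging in, the multiplicity is at least $KL - (m-K) = K(L-1) + (K - m + n)\ge$ ... here I need to be slightly careful: the statement claims the bound $KL - m + (m-K) = K(L-1)$, which corresponds exactly to taking $m - n = m - K$, i.e.\ to the (worst) case where the $K$ copies of $\mathbf{u}$ together with the $K$ special vertices exhaust all $m$ vertices, and more generally $m-n \le m-K$ always, so $KL-(m-n) \ge KL - (m-K) = K(L-1)$. That is precisely the claimed bound.

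I do not anticipate a serious obstacle; the only point requiring a moment's care is the combinatorial claim that the $K$ subtrees-below-the-vertices $v_1,\dots,v_K$ are pairwise disjoint and disjoint from $\{v_1,\dots,v_K\}$ in the right way, so that restricting $A$ to their union genuinely gives a direct sum of $K$ copies of the adjacency matrix of $\mathbf{u}$ and uses up only $m-n \le m-K$ deletions. This follows because in a rooted tree the subtrees hanging below two distinct vertices are either nested or disjoint, and nesting is impossible here since each such subtree, with its top vertex removed, equals the fixed forest $\mathbf{u}$ — a nested pair would force one copy of $\mathbf{u}$ to properly contain another, contradicting equal vertex counts. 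Once this is in place, steps (1)–(3) are immediate, and the corollary follows.
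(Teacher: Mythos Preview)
Your approach has a genuine gap: the arithmetic in step~(3) is wrong. You assert that $KL - (m-K) = K(L-1)$, but in fact $KL - (m-K) = KL - m + K$, which equals $K(L-1)$ only in the accidental case $m = 2K$. The inequality you establish, $n \ge K$ (equivalently $m-n \le m-K$), is far too weak; what the interlacing bound actually requires to reach $K(L-1)$ is $m - n \le K$.

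The source of the trouble is your choice of principal submatrix. By restricting to \emph{only} the $K$ copies of $\mathbf{u}$ and discarding the ``further components'', you delete $m - K\cdot\#\mathbf{u}$ rows and columns, and this can be much larger than $K$: since the $K$ disjoint subtrees below $v_1,\dots,v_K$ each have $\#\mathbf{u}+1$ vertices, one always has $m \ge K(\#\mathbf{u}+1)$, with strict inequality whenever those subtrees fail to exhaust $\mathbf{t}$. The interlacing bound with your $B$ then yields only that the multiplicity of $\gamma$ in $A$ is at least $KL - (m - K\cdot\#\mathbf{u})$, which can even be negative and is in general strictly weaker than $K(L-1)$.

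The fix --- and this is exactly what the paper does --- is to take $B$ to be the adjacency matrix of the graph obtained by deleting \emph{only} the $K$ special vertices $v_1,\dots,v_K$ and nothing else. Then $n = m-K$, so $m - n = K$ exactly. The resulting forest still contains (by your correct disjointness argument, which the paper glosses over) $K$ disjoint copies of $\mathbf{u}$ among its blocks, so the multiplicity of $\gamma$ in $B$ is at least $KL$; the remaining blocks can only help. Now Proposition~\ref{P:submatrix_bound}(c) gives that the multiplicity of $\gamma$ in $A$ is at least $KL - K = K(L-1)$, which is precisely the displayed expression $KL - m + (m-K)$ in the statement.
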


\begin{proof}
The proof follows immediately by applying Proposition~\ref{P:submatrix_bound}(c)
to the matrix $B$ that is the adjacency matrix of the graph
obtained by deleting the $K$ designated vertices from $\mathbf{t}$.
The matrix $B$ is block diagonal, and some of its blocks can be
collected into $K$ identical larger blocks that each form a copy
of the adjacency matrix of the forest $\mathbf{u}$.  It remains
to observe that the set of eigenvalues of a block diagonal matrix is
the union (including multiplicities) of the sets of eigenvalues of 
the respective blocks.
\end{proof}

\subsection{Proof of Theorem~\ref{theo:esd}(a)}
\label{proof:_theo:esd}

Suppose that the random tree $\TT_n$ has $M_n$ vertices 
and adjacency matrix $A_n$.  

Fix a positive integer $K$.  The construction of
several objects in the proof will depend on $K$, but
our notation will not record this.

Denote by $W_n$ the set of vertices $v$ of $\TT_n$ such
that the sub-tree below $v$ (including $v$)
contains at most $K$ vertices. Put $N_n := \# W_n$.
In the notation of Section~\ref{sec:tree-conv},
$N_n/M_n = \Phi(\TT_n, \{(t_0, t_1, \ldots) : \# t_0 \le K\})$.

In order to avoid conflicting notation, write the limit {\tt sin}-tree
$\TT$ as $(\TT^0, \TT^1, \ldots)$.  By the assumption of
probability fringe convergence, $N_n/M_n$ converges in distribution
to the constant $\mathbb{P}\{\# \TT^0 \le K\}$.  The latter constant 
can be made arbitrarily close to $1$ by choosing $K$ sufficiently large.

Denote by $\UU_n$ the subgraph of $\TT_n$ induced by the set
of vertices $W_n$.  That is, the graph $\UU_n$ has vertex
set $W_n$ and two vertices in $\UU_n$ are connected by
an edge if they are connected by an edge in $\TT_n$. 
The graph $\UU_n$ is a forest.

Write $X_{nk}$, $1 \le k \le K$, for the set of vertices 
$v$ of $\TT_n$ with the following two properties:
\begin{itemize}
\item
the subtree below $v$ contains $k$ vertices,
\item 
if $w$ is first vertex (other than $v$)  on the path to the root from $v$,
then $w$ is on the path to the root for more than $K$ vertices
(that is, the subtree below $w$ contains more than $K$ vertices).
\end{itemize}
The set of roots of the trees in the
the forest $\UU_n$ is the disjoint union
$\bigcup_{k=1}^K X_{nk}$. Put $R_{nk} := \# X_{nk}$, so that 
$N_n = \sum_{k=1}^K k R_{nk}$. It follows from the
assumption of probability fringe convergence that 
$R_{nk}/M_n 
= \Phi(\TT_n, \{(t_0, t_1, \ldots) : \# t_0 = k, \,
\# t_0 + \# t_1 > K\})$ 
converges in distribution to the constant 
$p_k := \mathbb{P}\{\# \TT^0 = k, \, \# \TT^0 + \# \TT^1 > K\}$.
Of course, the value of $p_k$ depends on $K$ and may be $0$.
However,
\[
\begin{split}
\sum_{k=1}^K k p_k 
& = \lim_{n \rightarrow \infty} \sum_{k=1}^K k \frac{R_{nk}}{M_n} \\
& = \lim_{n \rightarrow \infty} \frac{N_n}{M_n} \\
& = \mathbb{P}\{\# \TT^0 \le K\}. \\
\end{split}
\]

Moreover, if we write
\[
\Xi_{nk} := 
\frac{M_n}{R_{nk}}
\Phi(\TT_n, \cdot \cap 
\{(t_0, t_1, \ldots) : \# t_0 = k, \, \# t_0 + \# t_1 > K\})
\]
for the empirical distribution of the subtrees rooted at the
vertices in $X_{nk}$ (with some suitable convention
when $R_{nk} = 0$), then $\Xi_{nk}$ is concentrated
on the finite set of trees with $k$ vertices and 
$\Xi_{nk}(\{\mathbf{t}\})$ converges in distribution 
when $p_k>0$ to the
constant 
\[
\Xi_k(\{\mathbf{t}\}) := 
\mathbb{P}\{\TT^0 = \mathbf{t} \; | \; \# \TT^0 = k, \, \# \TT^0 + \# \TT^1 > K\}
\]
for each such tree.

Denote by $\lambda_k$ the distribution of an eigenvalue
picked independently and uniformly at random from
the $k$ eigenvalues (counting possible multiplicities)
of the $k \times k$ adjacency matrix of a $k$-vertex
random tree with distribution $\Xi_k$.  
The probability measure $\lambda_k$ is concentrated
on the finite set of real numbers that are the possible eigenvalues
of some tree with $k$ vertices.

Write $B_n$ for the adjacency matrix of the forest $\UU_n$.
This is a block diagonal matrix with $R_{nk}$ many $k \times k$ blocks
for $1 \le k \le K$.  Recall that the set of eigenvalues
of a block diagonal Hermitian matrix is the union of the eigenvalues
of the blocks (including multiplicities).  Thus, the spectral distribution of
$B_n$ converges in distribution to the deterministic probability measure
\[
\frac{
\sum_{k=1}^K k p_k \lambda_k
}
{
\sum_{k=1}^K k p_k
}
\]
as $n \rightarrow \infty$.

An application of Proposition~\ref{P:submatrix_bound}(b) completes the proof.

\begin{remark}
It is instructive to consider what the 
various objects that appeared in the
proof look like in a simple example.  Suppose that $\TT_n$ is the
deterministic tree with $2^{n+1}-1$ vertices in which every non-leaf
vertex has out-degree $2$ and each leaf is distance $n$ from the root.
We say that $\TT_n$ is a complete binary tree of height $n$.
It is clear that $\TT_n$ converges in the probability fringe sense to
a random {\tt sin}-tree $(\TT^0, \TT^1, \ldots)$, where 
$\TT^0$ is a complete binary tree of height $H$ with
$\prob\{H=h\} = 2^{-h}$, $h=0,1,\ldots$, and $\TT^i$ consists
of a root connected by an edge to the root of a complete binary tree
of height $H+i-1$ for $i \ge 1$.

If $2^{n+1}-1 \ge K$ and $\ell$ is the unique
integer such that $2^{\ell+1} - 1 \le K < 2^{\ell+2}-1$, 
then $W_n$ is the set of vertices of $\TT_n$ that are within
distance at most $\ell$ of the leaves. 
Thus, $N_n = 2^{n-\ell}(2^{\ell+1}-1)$.
Moreover, the set $X_{nk}$ is empty unless 
$k = 2^{\ell+1}$, in which case $X_{nk}$ 
is the set of vertices of $\TT_n$ that are
at distance exactly $\ell$ from the leaves and $R_{nk} = 2^{n-\ell}$.

The sub-probability distribution $(p_k)_{k=1}^K$ assigns mass
$2^{-\ell}$ to $2^{\ell+1}-1$ and $0$ elsewhere, while the
probability measure $\Xi_k$ is the point mass at the complete binary
tree of height $h$ when $k$ is of the form $2^{h+1}-1$.  
The spectral distribution
of $B_n$ converges to the spectral distribution of the complete
binary tree of height $\ell$.

\end{remark}

\subsection{Proof of Theorem~\ref{theo:esd}(b)}

The proof is almost identical to that of part~(a) of the theorem 
in Subsection~\ref{proof:_theo:esd}.  Recall from
that proof the constant $K$,
the probabilities $p_1, \ldots, p_K$, the probability
distributions $\lambda_k$, $1 \le k \le K$, on $\mathbb{R}$,
and the random adjacency matrix
$B_n$ with distribution depending on $K$ and $n$.
Recall also that the probability measure $\lambda_k$ is concentrated
on the finite set of real numbers that are the possible eigenvalues
of some tree with $k$ vertices.

It follows from the argument in Subsection~\ref{proof:_theo:esd}
that the mass assigned by the spectral distribution of $B_n$
to $\gamma \in \mathbb{R}$ converges in distribution 
to the deterministic probability measure
\[
\frac{
\sum_{k=1}^K k p_k \lambda_k(\{\gamma\})
}
{
\sum_{k=1}^K k p_k
}
\]
as $n \rightarrow \infty$.

An application of Proposition~\ref{P:submatrix_bound}(d)
completes the proof.

\subsection{Proof of Theorem~\ref{theo:esd}(c)}

It follows from Corollary~\ref{C:multiplicity_bound_trees} that
multiplicity of $\gamma$ as an eigenvalue of the adjacency matrix of $\TT_n$
is at least $(L-1)$ times the number of vertices 
$v$ of $\TT_n$ such that the forest formed by deleting
$v$ from the subtree below $v$ produces the forest $\mathbf{u}$.  By the
assumption of probability fringe convergence, the proportion of eigenvalues
of the adjacency matrix of $\TT_n$ that have the value $\gamma$ (that is,
$F_n(\{\gamma\})$) satisfies
\[
\prob\{F_n(\{\gamma\}) > (L-1) \prob\{\UU = \mathbf{u}\} - \eps\} \rightarrow 1
\]
as $n \rightarrow \infty$ for any $\eps > 0$.  Moreover, because $F_n$ converges weakly
to $F$ in distribution by Theorem~\ref{theo:esd}(a), 
\[
\prob\{F(\{\gamma\}) > F_n(\{\gamma\}) - \eps\} \rightarrow 1
\]
as $n \rightarrow \infty$ for any $\eps > 0$.  Combining these observations
establishes that
\[
F(\{\gamma\}) \ge (L-1) \prob\{\UU = \mathbf{u}\} > 0,
\]
as required.

\section{Maximal matchings and the number of zero eigenvalues}

\subsection{Combinatorial preliminaries} 
\label{comb_prelims}

The following lemma is standard, but we include the proof
for completeness.

\begin{Lemma}
\label{lemma:zero-eigen-max-matching}
Consider a tree $\mathbf{t}$ with $n$ vertices and adjacency matrix $A$. 
Let $\delta(\mathbf{t})$ denote the number of zero eigenvalues $A$. 
Then 
 \[
 \delta(\mathbf{t}) =  n - 2M(\mathbf{t}),
 \]
where $M(\mathbf{t})$ is the cardinality of a maximal matching of $\mathbf{t}$. 
\end{Lemma}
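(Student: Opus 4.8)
The plan is to argue by induction on the number of vertices $n$, exploiting the recursive structure of trees via leaves. The base case $n=1$ is immediate: a single vertex has adjacency matrix $[0]$, so $\delta(\mathbf{t})=1$, while the maximal matching is empty, $M(\mathbf{t})=0$, and indeed $1 = 1 - 2\cdot 0$. For the inductive step, pick a leaf $v$ of $\mathbf{t}$ and let $w$ be its unique neighbor. The key algebraic observation is that $\mathrm{rank}(A)$ can be computed by row/column reduction: the row and column of $A$ indexed by $v$ each have a single nonzero entry (a $1$ in position $w$), so these form a $2\times 2$ nonsingular block that can be eliminated. Concretely, deleting both $v$ and $w$ from $\mathbf{t}$ produces a forest $\mathbf{t}'$ (the components being the subtrees hanging off $w$ other than $\{v\}$, together with the rest of the tree), and a standard Schur-complement / pivoting argument shows $\mathrm{rank}(A(\mathbf{t})) = \mathrm{rank}(A(\mathbf{t}')) + 2$, hence $\delta(\mathbf{t}) = \delta(\mathbf{t}') + (\text{number of vertices lost}) - 2 = \delta(\mathbf{t}') + n - 2 - (n-2) $; more carefully, $\delta(\mathbf{t}) = (n - \mathrm{rank}(A(\mathbf{t}))) = (n-2) - \mathrm{rank}(A(\mathbf{t}')) = \delta(\mathbf{t}')$ where now $\delta(\mathbf{t}')$ is the nullity of the $(n-2)$-vertex forest.

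Next I would handle the combinatorial side: I claim there is always a maximal matching of $\mathbf{t}$ that uses the edge $\{v,w\}$. Indeed, given any maximal matching $S$, if it does not saturate $v$, then $w$ is matched via at most one edge $\{w,w'\}$; replacing that edge by $\{v,w\}$ (or simply adding $\{v,w\}$ if $w$ is unsaturated) yields a matching of cardinality $\ge |S|$, hence also maximal. Therefore $M(\mathbf{t}) = 1 + M(\mathbf{t}')$, where $\mathbf{t}'$ is the forest obtained by deleting $v$ and $w$. Combining the two reductions with the inductive hypothesis applied componentwise to the forest $\mathbf{t}'$ (extending the lemma from trees to forests is immediate since both $\delta$ and $n - 2M$ are additive over connected components) gives
\[
\delta(\mathbf{t}) = \delta(\mathbf{t}') = (n-2) - 2M(\mathbf{t}') = (n-2) - 2(M(\mathbf{t})-1) = n - 2M(\mathbf{t}),
\]
as required.

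The main obstacle is the rank identity $\mathrm{rank}(A(\mathbf{t})) = \mathrm{rank}(A(\mathbf{t}')) + 2$: one must be careful that deleting the leaf's neighbor $w$ can disconnect several subtrees at once, and that the elimination step genuinely clears out \emph{all} entries in row/column $w$ without affecting the remaining principal block except by removing it. This is handled cleanly by writing $A$ in block form with the $\{v,w\}$ rows/columns first and noting that the off-diagonal coupling block has rank exactly matching the $2\times2$ pivot, so the Schur complement is precisely $A(\mathbf{t}')$; alternatively one can invoke that for any leaf $v$ with neighbor $w$, $\gamma=0$ is an eigenvalue of $A(\mathbf{t})$ with the same multiplicity as for $A(\mathbf{t}\setminus\{v,w\})$, a fact that also follows from the interlacing-type reasoning already developed in the paper. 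Everything else is bookkeeping, and the extension to forests is free because a matching of a forest is just a union of matchings of its components.
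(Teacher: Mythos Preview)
Your proof is correct, but it follows a different route from the paper's. The paper dispatches the lemma in one line by quoting the classical identity that for a tree (more generally, a forest) the characteristic polynomial of the adjacency matrix equals the matching polynomial,
\[
\det(zI - A) = \sum_{k=0}^{\lfloor n/2 \rfloor} (-1)^k N_k(\mathbf{t})\, z^{\,n-2k},
\]
where $N_k(\mathbf{t})$ counts $k$-edge matchings; the multiplicity of $z=0$ is then immediately read off as $n-2M(\mathbf{t})$, since $N_{M(\mathbf{t})}(\mathbf{t})\neq 0$. Your argument instead proceeds by leaf-stripping induction: a leaf $v$ with neighbor $w$ gives a nonsingular $2\times 2$ pivot block, the Schur complement is exactly $A(\mathbf{t}\setminus\{v,w\})$ (the coupling block $\bigl(\begin{smallmatrix}0\\ b^T\end{smallmatrix}\bigr)$ annihilates under the pivot inverse), and on the combinatorial side the edge $\{v,w\}$ can always be forced into a maximum matching. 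Both recursions drop the relevant quantity by the right amount, and the induction closes over forests.

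What each approach buys: the paper's is a one-shot global identity, but it relies on a nontrivial cited fact (the tree case of the Sachs/Harary formula). Yours is self-contained and, pleasantly, is exactly the rank-and-matching shadow of the Karp--Sipser leaf-removal algorithm that the paper exploits in the very next subsection, so it meshes well thematically. Two small cleanups: the sentence beginning ``hence $\delta(\mathbf{t}) = \delta(\mathbf{t}') + (\text{number of vertices lost}) - 2$'' is garbled before you self-correct, so just delete it; and the aside that the equal-nullity fact ``also follows from the interlacing-type reasoning already developed in the paper'' overstates things, since interlacing only gives $|\delta(\mathbf{t})-\delta(\mathbf{t}')|\le 2$, not equality --- your Schur-complement computation is what actually nails it.
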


\begin{proof} 
It follows from the usual expansion of the determinant that
the characteristic polynomial of the
adjacency matrix of $\mathbf{t}$ is given by
\[
\det(z I - A) = \sum_{k=0}^{\lfloor n/2 \rfloor} (-1)^k N_k(\mathbf{t})  z^{n-2k},
\]
where 
$N_k(\mathbf{t})$ is the number of matchings of 
$\mathbf{t}$ that contain $k$ edges 
(see, for example, Additional Result 7b of \cite{MR1271140}),
and the result follows immediately.
\end{proof} 

Our analysis of the cardinality of a maximal matching 
for a tree relies on the following ``greedy''
algorithm for producing a maximal matching
of a forest.  It is a simplification of one
due to Karp and Sipser \cite{karp-sipser} that is
intended to find approximate maximal matchings of more general sparse graphs.
The algorithm takes an initial forest and iteratively produces
forests with the same set of vertices but smaller sets of edges
while at the same time adding edges to a matching of the initial forest.
We stress that a leaf of a forest is a vertex with degree one.

\bigskip

\begin{itemize}
\item Input a forest $\mathbf{f}$ with vertices $V(\mathbf{f})$ and
edges  $E(\mathbf{f})$.
\item Initialize $S \leftarrow \emptyset$. 
\item While $ E(\mathbf{f}) \ne \emptyset$ do
\begin{enumerate}
\item[*] Choose a leaf, say $x$,
and let $\{ x, y\} $ 
be the unique edge in $\mathbf{f}$ incident to $x$.
\item[*] Set 
$E(\mathbf{f}) \leftarrow  \{e \in E(\mathbf{f}) : e \cap \{x,y\} = \emptyset\}$, and $S \leftarrow S \cup \{\{x,y\}\}$ .
\end{enumerate}
\item Output the matching $S$.
\end{itemize}

\begin{Lemma}
\label{L:Karp_Sipser_maximal}
The algorithm produces a 
maximal matching as its output. 
\end{Lemma}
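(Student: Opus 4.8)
The plan is to show that the matching $S$ produced by the greedy leaf‑stripping algorithm is maximal by an exchange argument, following the classical idea behind the Karp--Sipser reduction. The key structural observation is the following: if $x$ is a leaf of a forest $\mathbf{f}$ with unique neighbour $y$, then there is \emph{some} maximal matching of $\mathbf{f}$ that contains the edge $\{x,y\}$. Indeed, take any maximal matching $S^{*}$ of $\mathbf{f}$. If $\{x,y\} \in S^{*}$ we are done. Otherwise, since $x$ has degree one, $x$ is unmatched by $S^{*}$ (its only possible matching edge is $\{x,y\}$), and $y$ is either unmatched or matched via some edge $\{y,z\}$. In the first case $S^{*} \cup \{\{x,y\}\}$ is a larger matching, contradicting maximality; in the second case $S^{*} \setminus \{\{y,z\}\} \cup \{\{x,y\}\}$ is a matching of the same cardinality containing $\{x,y\}$. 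Either way we obtain a maximal matching using $\{x,y\}$.

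The second ingredient is that after the deletion step, the problem decouples correctly. Let $\mathbf{f}'$ be the forest with the same vertex set as $\mathbf{f}$ but with $E(\mathbf{f}') = \{e \in E(\mathbf{f}) : e \cap \{x,y\} = \emptyset\}$, i.e.\ $\mathbf{f}'$ is $\mathbf{f}$ with the vertices $x,y$ (and all incident edges) removed, leaving $x,y$ as isolated vertices. Then any matching of $\mathbf{f}$ that contains $\{x,y\}$ is exactly $\{\{x,y\}\}$ together with a matching of $\mathbf{f}'$, and conversely. Combining this with the first observation, $M(\mathbf{f}) = 1 + M(\mathbf{f}')$, and moreover a maximal matching of $\mathbf{f}'$ together with $\{x,y\}$ is a maximal matching of $\mathbf{f}$.

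With these two facts the proof is a straightforward induction on $\#E(\mathbf{f})$. If $E(\mathbf{f}) = \emptyset$ the algorithm outputs $S = \emptyset$, which is trivially the (unique, empty) maximal matching. For the inductive step, the algorithm picks a leaf $x$ with neighbour $y$, adds $\{x,y\}$ to $S$, and recurses on the forest $\mathbf{f}'$ described above, which has strictly fewer edges. Note $\mathbf{f}'$ is still a forest (deleting vertices from a forest leaves a forest), so the induction hypothesis applies and says the remainder of the run produces a maximal matching $S'$ of $\mathbf{f}'$. By the second ingredient, $S = \{\{x,y\}\} \cup S'$ is then a maximal matching of $\mathbf{f}$. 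One small point to check is that the algorithm is well defined, i.e.\ that as long as $E(\mathbf{f}) \neq \emptyset$ a leaf exists: this holds because every finite forest with at least one edge has a connected component that is a tree with at least two vertices, and every finite tree with at least two vertices has at least two leaves.

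The step I expect to require the most care in the write‑up is the exchange argument establishing that $\{x,y\}$ lies in some maximal matching — specifically being precise that a degree‑one vertex cannot be matched to anything other than its unique neighbour, so that the only two cases ($y$ unmatched, or $y$ matched elsewhere) are exhaustive, and that the swap in the second case preserves cardinality while the addition in the first case strictly increases it (the latter contradicting maximality of $S^{*}$). Everything else is bookkeeping about how deleting two adjacent vertices from a forest affects its matchings.
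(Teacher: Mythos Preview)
Your proof is correct and follows essentially the same approach as the paper: the exchange argument showing that the leaf edge $\{x,y\}$ lies in some maximal matching, followed by induction on the number of edges. You are somewhat more explicit than the paper in spelling out the decoupling identity $M(\mathbf{f}) = 1 + M(\mathbf{f}')$ and the well-definedness of the leaf choice, but the core idea is identical.
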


\begin{proof}
Let $x$ be any leaf of the forest, 
and write $\{x,y\}$ for the unique incident edge.
Note that every maximal
matching either contains the
edge $\{x,y\}$ or an edge of the form
$\{y,z\}$ for some vertex $z \ne x$, because
otherwise $\{x,y\}$ could be added to
a putative maximal matching that contains 
no edge of the form $\{y,w\}$ to produce a matching with a larger
cardinality. Also, note that replacing any edge of
the form $\{y,z\}$ with $z \ne x$ that
appears in some matching
by the edge $\{x,y\}$ results in a collection
of edges that is also a matching
and has the same cardinality.  It follows that
the edge $\{x,y\}$ must belong to at least
one maximal matching.

The result now follows by induction on the number
of edges in the forest.
\end{proof}

Note that we are free to take any current leaf at each iteration of the ``while'' step
of the algorithm.
We start with some initial set of leaves and each iteration of the while step
removes some leaves (by turning them into isolated vertices) as well
as sometimes producing new leaves.  We can therefore think of the leaves present 
after the completion of each
while step as being labeled with the number of the step at
which that vertex became a leaf, where the leaves in the initial forest are labeled
with $0$.  We adopt the convention that in any iteration of the while step we take one of the current leaves with the lowest label.

Put $i_0 = 0$ and  define $i_1, i_2, \ldots$ inductively by
setting $i_{k+1}$ to be the number of iterations of the while step required until all of the
leaves with labels at most $i_k$ are turned into isolated vertices, where $i_{k+1}=i_k$
if the forest after $i_k$ iterations already consists of only isolated vertices.
The numbers $i_k$ are eventually constant and this final value
is the cardinality of a maximal matching.

The iterations  $i_k+1, \ldots, i_{k+1}$ of the while step are 
of the following two types.
\begin{itemize}
\item[{\bf Type I:}]
An iteration that removes all of
the edges of the form $\{y,z\}$, where the vertex $y$ is not a leaf with label at
most $i_k$ and
there is a leaf $x$ with label at most $i_k$ such that $\{y,x\}$ is an edge (so that
$y$ is at graph distance $1$ from the leaves of the forest present after $i_k$
iterations).
\item[{\bf Type II:}]
An iteration that removes an edge of the form $\{y,z\}$ such that $y$ and $z$ are 
both leaves with label at most $i_k$ 
(we say that $\{y,z\}$ is an {\em isolated edge} in the forest present
after $i_k$ iterations).
\end{itemize}

Therefore, the cardinality of a maximal matching is the number of vertices
that will be at graph distance $1$ from the current leaves after $i_k$ iterations
of the while step for some $k$ 
plus the number of edges in the initial forest that will
eventually become isolated edges after $i_k$ iterations of the while step
for some $k$.  We next introduce
some notation to describe the sets of vertices and edges
we have just characterized.

Write $\mathbf{f}_k$, $E_k(\mathbf{f})$, $ L_k(\mathbf{f})$, and $I_k(\mathbf{f})$, respectively, for the
forest, the set of edges,
the set of leaves, and the set of isolated vertices 
after $i_k$ iterations of the while step starting from the initial  forest $\mathbf{f}$.
Note that $E_{k}(\mathbf{f})$ is
obtained by removing all edges $\{y,z\} \in E_{k-1}(\mathbf{f})$
such that there exists $x \in L_{k-1}(\mathbf{f})$ with $\{x,y\} \in E_{k-1}(\mathbf{f})$. Equivalently, $E_{k}(\mathbf{f})$ consists of exactly those edges
$\{u,v\} \in E_{k-1} (\mathbf{f})$ such that both vertices $u$ and $v$ are at graph
distance at least $2$ from $L_{k-1} (\mathbf{f})$ in $\mathbf{f}_{k-1}$.  
This means that vertices that are distance $0$
or $1$ from $L_{k-1}(\mathbf{f})$ in $\mathbf{f}_{k-1}$ are isolated in $\mathbf{f}_k$, and vertices
that are at graph distance $2$ or greater from $L_{k-1}(\mathbf{f})$ have degree
in $\mathbf{f}_{k}$ equal to the number of their neighbors
in $\mathbf{f}_{k-1}$ that are at graph distance $2$ or greater from $L_{k-1}(\mathbf{f})$.
   
We further introduce new sets  $G_k(\mathbf {f}), H_k(\mathbf {f})$ and $J_k(\mathbf {f})$ as follows:
\begin{eqnarray*}
G_k(\mathbf {f}) &:=& \{ u \in L_k(\mathbf {f}) :  \exists v \in L_k(\mathbf {f}) \text{ so that } \{u, v\} \in E_k(\mathbf {f}) \},\\
H_k(\mathbf {f}) &:=& \{ u \in V(\mathbf{f}) \setminus L_k(\mathbf{f}) :  \exists v \in L_k(\mathbf {f}) \text{ so that } \{u, v\} \in E_k(\mathbf {f}) \},\\
J_k(\mathbf {f}) &:=& 
(I_{k+1}(\mathbf{f}) \setminus I_k(\mathbf {f})) \setminus (G_k(\mathbf {f}) \cup H_k(\mathbf {f})).
\end{eqnarray*}
In words, $G_k(\mathbf {f})$ is the set of leaves that are one
of the two leaves of an isolated edge present
after $i_k$ iterations of the while step -- these are the
vertices that  become isolated during iterations 
$i_k+1, \ldots, i_{k+1}$ due to Type II steps,
$H_k(\mathbf {f})$ is the set of vertices that are graph
distance $1$ from the leaves after $i_k$ 
iterations of the while step
-- these are the non-leaf 
vertices that  become isolated during iterations 
$i_k+1, \ldots, i_{k+1}$ due to Type I steps,
and $J_k(\mathbf {f})$ is the remaining
set of vertices that become isolated during iterations 
$i_k+1, \ldots, i_{k+1}$ (all due to Type I steps).
Note that $V(\mathbf {f})$ is the disjoint union of
$I_0(\mathbf {f})$ and  $G_k(\mathbf{f}), H_k(\mathbf{f}), J_k(\mathbf{f})$, $k \ge 0$, and so
\[
\#V(\mathbf{f}) 
= 
\# I_0(\mathbf{f}) 
+ 
\sum_{k=0}^\infty 
\left(
\# G_k(\mathbf{f}) + \# H_k(\mathbf{f}) + \# J_k(\mathbf {f})
\right).
\]

Note that the forest $\mathbf{f}_k$ can be obtained from the forest $\mathbf{f}_{k-1}$ by deleting all the isolated edges of $\mathbf{f}_{k-1}$ along with  all the edges of $\mathbf{f}_{k-1}$ that are incident to the vertices that are at a graph distance 1 from the leaves of $\mathbf{f}_{k-1}$.
In particular,   $\mathbf{f}_k$ does not depend on the order in which we perform the leaf-removal operations (Type I and Type II) between iterations $i_{k-1}+1, \ldots, i_k$ of the while step.  Thus, by induction on $k$, it is easy to see that the sets of vertices  $G_k(\mathbf{f}), H_k(\mathbf{f})$ and $J_k(\mathbf{f})$ are well defined in the sense that they do not depend on how we order the leaves of the initial forest~$\mathbf{f}_0$.

Clearly, all the above objects
 can also be defined for an infinite forest  $\mathbf{f}$ such that
every vertex is at a finite graph distance from a leaf, 
(that is, a vertex of degree one). 

The discussion above leads immediately  to the following result.

\begin{Lemma}
\label{lem:countmatch}
The cardinality of a maximal matching of a finite forest $\mathbf{f}$ is
\[
M(\mathbf{f}) = \sum_{k=0}^{\infty} \# H_{k}(\mathbf{f}) +  \frac 12 \sum_{k=0}^{\infty}  \# G_{k}(\mathbf{f}).
\]
Consequently, the number of zero eigenvalues of
the adjacency matrix of a finite tree $\mathbf{t}$ is
\[
\delta(\mathbf{t})
= 
\# V(\mathbf{t}) - 2 M(\mathbf{t})
=
\sum_{k=0}^{\infty} \# J_{k}(\mathbf{t})
- \sum_{k=0}^{\infty} \# H_{k}(\mathbf{t}). 
\]
\end{Lemma}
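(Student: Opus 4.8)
The plan is to read both formulas directly off the structural description of the Karp--Sipser algorithm given above. Since exactly one edge is added to $S$ at each iteration of the while loop, Lemma~\ref{L:Karp_Sipser_maximal} shows that $M(\mathbf{f})$ equals the total number of iterations of the loop, namely $\sum_{k=0}^{\infty}(i_{k+1}-i_k)$. It therefore suffices to count, separately within each block of iterations $i_k+1,\ldots,i_{k+1}$, how many steps are of Type~I and how many are of Type~II.

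For Type~I, I claim that the steps in block $k$ are in bijection with $H_k(\mathbf{f})$. Each $y\in H_k(\mathbf{f})$ has a neighbour $x$ that is a leaf of $\mathbf{f}_k$; because $x$ has label at most $i_k$ and we always process a current leaf of lowest label, $x$ is selected at some step of block $k$, and at that step the edge $\{x,y\}$ (still the unique edge at $x$, as edges are only deleted) is added to $S$ and all edges incident to $y$ are deleted, turning $y$ into an isolated vertex by a Type~I step. Here $y$ is matched as the \emph{non-leaf} endpoint, before it could ever become a leaf itself and be processed as such, since any label $y$ acquires during block $k$ exceeds $i_k$. Conversely, each Type~I step of block $k$ isolates exactly one vertex of $H_k(\mathbf{f})$ (once a vertex is isolated it is never touched again), so block $k$ contains exactly $\#H_k(\mathbf{f})$ Type~I steps. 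For Type~II, the steps of block $k$ are in bijection with the isolated edges of $\mathbf{f}_k$, of which there are $\frac{1}{2}\#G_k(\mathbf{f})$ because $G_k(\mathbf{f})$ is exactly the set of endpoints of those edges. Summing over $k$ gives
\[
M(\mathbf{f}) \;=\; \sum_{k=0}^{\infty}\#H_k(\mathbf{f}) \;+\; \frac{1}{2}\sum_{k=0}^{\infty}\#G_k(\mathbf{f}).
\]

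For the second assertion, Lemma~\ref{lemma:zero-eigen-max-matching} gives $\delta(\mathbf{t})=\#V(\mathbf{t})-2M(\mathbf{t})$. A tree $\mathbf{t}$ with at least one edge has $I_0(\mathbf{t})=\emptyset$, so the partition of the vertex set recorded just before the statement reduces to $\#V(\mathbf{t})=\sum_{k=0}^{\infty}\bigl(\#G_k(\mathbf{t})+\#H_k(\mathbf{t})+\#J_k(\mathbf{t})\bigr)$; substituting this and the displayed formula for $M(\mathbf{t})$, and cancelling the $\#G_k$ and $\#H_k$ contributions, yields
\[
\delta(\mathbf{t}) \;=\; \sum_{k=0}^{\infty}\#J_k(\mathbf{t}) \;-\; \sum_{k=0}^{\infty}\#H_k(\mathbf{t}).
\]
(The single-vertex tree is a trivial exceptional case and can be checked directly or simply excluded.)

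None of this is deep; it is purely combinatorial accounting. The one point requiring genuine care --- which I take to be the main obstacle --- is verifying that, within block $k$, every vertex of $H_k(\mathbf{f})$ is isolated exactly once and precisely as the non-leaf endpoint of a Type~I step, and that every isolated edge of $\mathbf{f}_k$ is consumed exactly once. The ``lowest label first'' convention and the order-independence of the sets $G_k(\mathbf{f})$, $H_k(\mathbf{f})$, $J_k(\mathbf{f})$ noted in the preceding discussion are exactly what make these counts unambiguous.
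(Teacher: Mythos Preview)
Your proof is correct and follows the same approach as the paper, which simply states that the result follows immediately from the preceding discussion; you have merely made the counting explicit. Your flagging of the single-vertex tree as a trivial exception is also appropriate, since in that case $I_0(\mathbf{t})\ne\emptyset$ and the displayed formula for $\delta(\mathbf{t})$ would otherwise be off by one.
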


\begin{Exm}
Consider the tree $\mathbf{t}$ with vertices $\{1, \ldots, m\}$ and edges connecting successive
integers.  The cardinality of a maximal matching is 
obviously $(m-1)/2$ when $m$ is odd
and $m/2$ when is even (so that $\delta(\mathbf{t})$
is $1$ when $m$ is odd
and $0$ when is even).  There are four cases to consider
in checking that this agrees with the formula of 
Lemma~\ref{lem:countmatch}.

\noindent
\textbf{Case I:} $m$ is odd and $(m-1)/2$ is odd
($\Leftrightarrow$ $m \equiv 3 \mod 4$).

Then, 
$H_0(\mathbf{t}) = \{2,m-1\}$, 
$H_1(\mathbf{t}) = \{4,m-3\}$,
$\ldots$, 
$H_{(m-3)/4}(\mathbf{t}) = \{(m+1)/2\}$, 
all other $H_k(\mathbf{t})$ are empty, 
and all $G_k(\mathbf{t})$ are empty.
The formula of Lemma~\ref{lem:countmatch}
gives $2 \times (m-3)/4 + 1= (m-1)/2$.

\noindent
\textbf{Case II:} $m$ is odd and $(m-1)/2$ is even
($\Leftrightarrow$ $m \equiv 1 \mod 4$).

Then, 
$H_0(\mathbf{t}) = \{2,m-1\}$, 
$H_1(\mathbf{t}) = \{4,m-3\}$,
$\ldots$,  
$H_{(m-5)/4}(\mathbf{t}) = \{(m-1)/2,(m+3)/2\}$, 
all other $H_k(\mathbf{t})$ are empty, 
and all $G_k(\mathbf{t})$ are empty.
The formula of  Lemma~\ref{lem:countmatch}
gives $2 \times   ( ( m-5)/4 +1)   = (m-1)/2$.

\noindent
\textbf{Case III:} $m$ is even and $(m-2)/2$ is odd
($\Leftrightarrow$ $m \equiv 0 \mod 4$).

Then, 
$H_0(\mathbf{t}) = \{2,m-1\}$, 
$H_1(\mathbf{t}) = \{4,m-3\}$,
$\ldots$,  
$H_{(m-4)/4}(\mathbf{t}) = \{m/2,(m+2)/2\}$,
all other $H_k(\mathbf{t})$ are empty, 
and all $G_k(\mathbf{t})$ are empty.
The formula of  Lemma~\ref{lem:countmatch}
gives $2 \times ( (m-4)/4 + 1 )= m/2$.

\noindent
\textbf{Case IV:} $m$ is even and $(m-2)/2$ is even
($\Leftrightarrow$ $m \equiv 2 \mod 4$)

Then, 
$H_0(\mathbf{t}) = \{2,m-1\}$, 
$H_1(\mathbf{t}) = \{4,m-3\}$,
$\ldots$, 
$H_{(m-6)/4}(\mathbf{t}) = \{(m-2)/2,(m+4)/2\}$, 
all other $H_k(\mathbf{t})$ are empty,
$G_{(m-2)/4}(\mathbf{t}) =  \{(m/2,(m+2)/2 \}$, 
and all other $G_k(\mathbf{t})$ are empty.  The formula of  Lemma~\ref{lem:countmatch}
gives $2 \times ( (m-6)/4 + 1 ) + 1 = m/2$.
\end{Exm}

\subsection{Maximal weighted matchings: Proof of Theorem~\ref{theo:max-matching}}

We will use the same construction as we used in the proof of Theorem~\ref{theo:esd}
in Subsection~\ref{proof:_theo:esd}.

Recall that for a fixed positive integer $K$ this construction produced for each $n$
a set of vertices  
$W_n$ of $\TT_n$ with cardinality $N_n$
such that $N_n/M_n$, where $M_n$ is the number of vertices of $\TT_n$, converged in distribution
to $\mathbb{P}\{\# \TT^0 \le K\}$ -- a constant 
that can be made arbitrarily close to $1$ 
by choosing $K$ sufficiently large.

The subgraph of $\TT_n$ induced by $W_n$
was the forest $\UU_n$ rooted at the points
$\bigcup_{k=1}^K X_{nk}$ and 
$\# X_{nk} /M_n$ 
converged in distribution to the constant 
$p_k := \mathbb{P}\{\# \TT^0 = k, \, \# \TT^0 + \# \TT^1 > K\}$.

Moreover, the random probability measure
$\Xi_{nk}$ given by
the empirical distribution of the subtrees rooted at the
vertices in $X_{nk}$ was concentrated
on the finite set of trees with $k$ vertices and 
$\Xi_{nk}(\{\mathbf{t}\})$ converged in distribution 
when $p_k>0$ to the
constant 
\[
\Xi_k(\{\mathbf{t}\}) := 
\mathbb{P}\{\TT^0 = \mathbf{t} \; | \; \# \TT^0 = k, \, \# \TT^0 + \# \TT^1 > K\}
\]
for each such tree $\mathbf{t}$.

Write $M(\TT_n)$ (respectively, $M(\UU_n)$ for the total weight of a maximal
matching on $\TT_n$ (respectively, $\UU_n$) for the independent, identically
distributed edge weights $X_n(e)$, where $e$ ranges over the edges of $\TT_n$.

Note that a maximal matching on $\UU_n$ is obtained by separately constructing
maximal matchings on each component subtree of $\UU_n$.  
It follows from Lemma~\ref{lemma:categories} below that $M_n^{-1} M(\UU_n)$
converges in distribution to 
\[
\sum_{k=1}^K p_k \sum_{\mathbf{t} : \# \mathbf{t} = k} \Xi_k(\{\mathbf{t}\}) \mu(\mathbf{t}),
\]
where $\mu(\mathbf{t})$ is the expected value of the total weight of a maximal matching
on $\mathbf{t}$ when the weights of the edges are independent and identically distributed
with common distribution $\nu$.

Observe that any matching on $\UU_n$ is also a matching on $\TT_n$ and that the
restriction of any matching on $\TT_n$ to $\UU_n$ is a matching on $\UU_n$.  
Thus,
\[
M(\UU_n) \le M(\TT_n) \le M(\UU_n) + \sum_{e \in E(\TT_n) \backslash E(\UU_n)} X_n(e), 
\]
where $E(\TT_n)$ (respectively, $E(\UU_n)$) is the set of edges of $\TT_n$
(respectively, $\UU_n$).

There is an element of $E(\TT_n) \backslash E(\UU_n)$ for each vertex of $\TT_n$
other than the root that is not a vertex of $\UU_n$ and one for each root
of a subtree in the forest $\UU_n$.
Thus, writing $\mu$ for the common expected value of the edge weights,
\[
\mathbb{E} \left [ M_n^{-1} \sum_{e \in E(\TT_n) \backslash E(\UU_n)} X_n(e) \; \bigg | \; \TT_n \right]
= M_n^{-1 }\left [(M_n - N_n - 1)_+ + \sum_{k=1}^K \# X_{nk} \right] \mu.
\]

Note from above that
$1 - M_n^{-1} N_n$ converges in distribution
to the constant $\mathbb{P}\{\# \TT^0 > K\}$  and
$M_n^{-1} \sum_{k=1}^K \# X_{nk}$ converges in distribution
to the constant $\sum_{k=1}^K p_k = \mathbb{P}\{\# \TT^0 \le K, \# \TT^0 + \# \TT^1 > K\}$
as $n \rightarrow \infty$.  Both of these constants converge to $0$ as
$K \rightarrow \infty$.
It follows that
\[
\lim_{K \rightarrow \infty} \lim_{n \rightarrow \infty}
\prob \left \{M_n^{-1} \sum_{e \in E(\TT_n) \backslash E(\UU_n)} X_n(e) > \eps \right \}
=
0
\]
for all $\eps > 0$.

Therefore, $M_n^{-1} M(\TT_n)$ converges in distribution as $n \rightarrow \infty$
to the constant
\[
\lim_{K \rightarrow \infty}
\sum_{k=1}^K p_k \sum_{\mathbf{t} : \# \mathbf{t} = k} \Xi_k(\{\mathbf{t}\}) \mu(\mathbf{t}),
\]
where we stress that $p_k$ and $\Xi_k$ depend on $K$, even though this is not indicated by
our notation.

The following lemma, which we used above, is a straightforward consequence of the
strong law of large numbers.

\begin{Lemma}
\label{lemma:categories}
For $i=1,2,\ldots$ let $L^i$ be a positive integer-valued random variable
and $\theta_1^i, \ldots, \theta_{L^i}^i$ be random variables
taking values in a finite set $\Theta$.  Suppose that as $i \rightarrow \infty$ 
the random variable $L^i$ converges
in distribution to $\infty$ and for each $\theta \in \Theta$ the random variable
\[
\frac
{\#\{1 \le j \le L^i : \theta_j^i = \theta\}}
{L^i}
\]
converges in distribution to a constant $\pi(\theta)$.
Let $\xi_1^i, \ldots, \xi_{L^i}^i$ be $\mathbb{R}_+$-valued random
variables that are conditionally independent given $\theta_1^i, \ldots, \theta_{L^i}^i$,
and such that
\[
\prob\{ \xi_j^i \in A \; | \; \theta_1^i, \ldots, \theta_{L^i}^i\} = \Pi(\theta_j^i; A)
\]
for some collection of Borel probability measures $(\Pi(\theta; \cdot))_{\theta \in \Theta}$.
Suppose that
\[
\upsilon(\theta) := \int_{\mathbb{R}_+} x \Pi(\theta; dx) < \infty
\]
for all $\theta \in \Theta$.  Then,
\[
\frac
{\sum_{j=1}^{L^i} \xi_j^i}
{L^i}
\]
converges in distribution to 
\[
\sum_{\theta \in \Theta} \pi(\theta) \upsilon(\theta)
\]
as $i \rightarrow \infty$.
\end{Lemma}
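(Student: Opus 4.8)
The plan is to reduce everything to the statement that $\bigl(\sum_{j=1}^{L^i}\xi_j^i\bigr)/L^i$ converges \emph{in probability} to $c:=\sum_{\theta\in\Theta}\pi(\theta)\upsilon(\theta)$, which is equivalent to the asserted convergence in distribution because the limit is a constant. The underlying idea is simple: after conditioning on the types, the sum splits by type into $\#\Theta$ conditionally independent sums of i.i.d.\ variables, the $\theta$-th having a random length $N^i(\theta):=\#\{1\le j\le L^i:\theta_j^i=\theta\}$ that is either asymptotically a positive fraction $\pi(\theta)$ of $L^i$ or asymptotically negligible; in the former case the strong law of large numbers controls the per-type average, in the latter a first-moment bound shows the per-type contribution vanishes. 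I would organize this through a single truncation that treats both cases uniformly. Fix the $\sigma$-field $\GG^i:=\sigma(L^i,\theta_1^i,\dots,\theta_{L^i}^i)$, which determines each $N^i(\theta)$; note $\sum_{\theta\in\Theta}N^i(\theta)=L^i$, that $N^i(\theta)/L^i$ converges in distribution, hence in probability, to $\pi(\theta)$ by hypothesis, and therefore $\sum_{\theta\in\Theta}\pi(\theta)=1$. Conditionally on $\GG^i$, the $\xi_j^i$ are independent with $\xi_j^i\sim\Pi(\theta_j^i;\cdot)$.

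Fix $T>0$ and write $\xi_j^i=(\xi_j^i\wedge T)+(\xi_j^i-T)_+$. For the bounded part, put $\upsilon_T(\theta):=\int_{\mathbb{R}_+}(x\wedge T)\,\Pi(\theta;dx)$; then $\mathbb{E}\bigl[\frac1{L^i}\sum_j(\xi_j^i\wedge T)\mid\GG^i\bigr]=\sum_\theta\frac{N^i(\theta)}{L^i}\upsilon_T(\theta)$, and since the conditional summands are independent and lie in $[0,T]$ the conditional variance of $\frac1{L^i}\sum_j(\xi_j^i\wedge T)$ is at most $T^2/L^i$. A conditional Chebyshev inequality, followed by taking expectations and using that $L^i\to\infty$ in distribution forces $\mathbb{E}[1/L^i]\to0$ (bounded convergence), yields $\frac1{L^i}\sum_j(\xi_j^i\wedge T)-\sum_\theta\frac{N^i(\theta)}{L^i}\upsilon_T(\theta)\to0$ in probability; since $\Theta$ is finite and $\upsilon_T$ bounded, the subtracted term tends in probability to $c_T:=\sum_\theta\pi(\theta)\upsilon_T(\theta)$, so $\frac1{L^i}\sum_j(\xi_j^i\wedge T)\to c_T$ in probability. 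For the overshoot, set $\rho_T:=\max_{\theta\in\Theta}\int_{\mathbb{R}_+}(x-T)_+\,\Pi(\theta;dx)$; dominated convergence (the integrand is dominated by the integrable function $x$) and finiteness of $\Theta$ give $\rho_T\downarrow0$ as $T\uparrow\infty$, and as $\mathbb{E}\bigl[\frac1{L^i}\sum_j(\xi_j^i-T)_+\mid\GG^i\bigr]\le\rho_T$, Markov's inequality gives $\mathbb{P}\bigl\{\frac1{L^i}\sum_j(\xi_j^i-T)_+>\eps\bigr\}\le\rho_T/\eps$ for every $i$.

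I would then combine: the sandwich $\frac1{L^i}\sum_j(\xi_j^i\wedge T)\le\frac1{L^i}\sum_j\xi_j^i\le\frac1{L^i}\sum_j(\xi_j^i\wedge T)+\frac1{L^i}\sum_j(\xi_j^i-T)_+$ together with $|c-c_T|\le\rho_T$ gives, for each fixed $T$ and every $\eps>\rho_T$, that $\limsup_{i\to\infty}\mathbb{P}\bigl\{|\frac1{L^i}\sum_j\xi_j^i-c|>\eps\bigr\}\le 2\rho_T/(\eps-\rho_T)$; letting $T\to\infty$ makes the right side vanish, proving convergence in probability and hence in distribution. The only point requiring any care — and it is still routine — is the order of limits, first $i\to\infty$ with $T$ fixed and then $T\to\infty$, which is precisely what the mere finiteness of the first moments $\upsilon(\theta)$ forces; apart from that the whole argument is the weak law of large numbers for a finite mixture with random, but almost surely divergent, sample sizes, with the conditioning on $\GG^i$ reducing it to the classical Chebyshev and Markov estimates.
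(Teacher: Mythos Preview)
Your argument is correct. The paper itself offers no proof of this lemma beyond the single sentence ``a straightforward consequence of the strong law of large numbers,'' so there is no detailed route to compare against. Your approach is a weak-law argument---truncation at level $T$, conditional Chebyshev for the bounded part, conditional Markov for the overshoot, then $T\to\infty$---whereas the paper's hint suggests coupling each type-$\theta$ block to a fixed i.i.d.\ sequence with law $\Pi(\theta;\cdot)$ and invoking the strong law along the random sample sizes $N^i(\theta)$. Your route is arguably cleaner: it avoids building any coupling and works directly with the given variables, at the modest cost of the extra truncation step. Both approaches ultimately rest on the same two facts you isolate, namely $N^i(\theta)/L^i\to\pi(\theta)$ in probability and the finite-mean hypothesis $\upsilon(\theta)<\infty$ for each $\theta$ in the finite set $\Theta$.
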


\begin{remark}
 Let a sequence $(\TT_n)_{n=1}^\infty$
of random unlabeled rooted trees 
 converge in the probability fringe sense to a random {\tt sin}-tree $\TT$. Let $M_n$ be the number of vertices in $\TT_n$. Consider the case when each edge-weight is identically one. Write $I(\TT_n)$ for the cardinality
 of a maximal independent set for $ \TT_n$. By K\"onig's theorem \cite{bondy-murty}, for a general bipartite graph the cardinality
  of a maximal matching is equal to the cardinality of a minimal vertex cover. On the other hand, complementation of a minimal vertex cover in any graph always yields a maximal independent set. Thus, $I(\TT_n) = M_n  - M(\TT_n)$ in our case.
Consequently,  $M_n^{-1} I(\TT_n)$ also converges in distribution 
to  a (model-dependent) constant $ \kappa_{\TT} \ge 1/2$ as $n \rightarrow \infty$.
\end{remark}

\subsection{Asymptotics of the number of zero eigenvalues}
\label{number_zero_eigenvalues}

If we combine Theorem~\ref{theo:max-matching} on the rescaled convergence
of the total weight of a maximal weighted matching 
with Lemma~\ref{lemma:zero-eigen-max-matching} on the
connection between the cardinality of a maximal matching and the
number of zero eigenvalues of the adjacency matrix, then we get
another proof of Theorem~\ref{theo:esd}(b) on the convergence
of $F_n(\{\gamma\})$ in the special case when $\gamma=0$.  We now
improve this result by using Lemma~\ref{lem:countmatch} to give
a formula for the limit in terms of features of the limit
{\tt sin}-tree.  We then show that how this formula may be used
to get explicit lower bounds on the limit.

\begin{Proposition} 
\label{prop:limmatch}
Consider a sequence $(\TT_n)_{n=1}^\infty$
of random unlabeled rooted trees, where $\TT_n$ has $M_n$ vertices.
Suppose that  $(\TT_n)_{n=1}^\infty$ converges in the probability fringe sense to a random {\tt sin}-tree $\TT = (\TT^0, \TT^1, \ldots)$
and write $R$ for the root of $\TT^0$. Then
$F_n(\{0\})$ converges in distribution as $n \rightarrow \infty$ to 
\[
\sum_{k=0}^{\infty} 
\Big(  
\prob\{R \in J_{k}(\TT)\} 
- \prob\{R \in H_{k}(\TT)\}  
\Big). 
\]

\end{Proposition}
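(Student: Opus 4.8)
The plan is to rewrite $M_n F_n(\{0\})$ as a sum over the vertices of $\TT_n$ of a quantity depending only on a bounded neighbourhood of the vertex, integrate this against the empirical fringe measure $\Phi(\TT_n,\cdot)$, and pass to the limit using Definition~\ref{def:prob-fringe}. Writing $M_n$ for the number of vertices of $\TT_n$, Lemma~\ref{lemma:zero-eigen-max-matching} and Lemma~\ref{lem:countmatch} give
\[
M_n F_n(\{0\}) = \delta(\TT_n) = \sum_{v \in V(\TT_n)} \sum_{k=0}^\infty \bigl( \ind\{v \in J_k(\TT_n)\} - \ind\{v \in H_k(\TT_n)\} \bigr),
\]
where, since a tree on $n\ge 2$ vertices has no isolated vertices, each vertex lies in exactly one of the sets $G_k(\TT_n), H_k(\TT_n), J_k(\TT_n)$, $k\ge 0$; in particular the inner sum has at most one nonzero term, of absolute value at most $1$.

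The crucial point is that the leaf-removal rounds are \emph{local}. Since $\mathbf{f}_k$ is obtained from $\mathbf{f}_{k-1}$ by deleting every edge incident to a vertex at graph distance at most $1$ from $L_{k-1}(\mathbf{f})$, the restriction of $\mathbf{f}_k$ to a ball of radius $r$ is determined by the restriction of $\mathbf{f}_{k-1}$ to the concentric ball of radius $r+2$; iterating, the restriction of $\mathbf{f}_k$ to the ball of radius $r$ about $v$ is a function of the restriction of $\mathbf{f}$ to the ball of radius $r+2k$ about $v$. Because membership of $v$ in $J_k(\mathbf{f})$ or $H_k(\mathbf{f})$ only involves $\mathbf{f}_k$ within distance $2$ of $v$, the indicators $\ind\{v \in J_k(\TT_n)\}$ and $\ind\{v \in H_k(\TT_n)\}$ depend only on the isomorphism type of the ball of radius $2k+2$ about $v$ in $\TT_n$, which is recovered from the first $2k+3$ coordinates of the fringe decomposition of $v$. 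Hence there are functions $\phi_k^J,\phi_k^H\colon\T_\ast^\infty\to\{0,1\}$, each depending on finitely many coordinates --- hence bounded and continuous for the product topology --- with $\ind\{v \in J_k(\TT_n)\}=\phi_k^J(\text{fringe of }v)$ and $\ind\{v \in H_k(\TT_n)\}=\phi_k^H(\text{fringe of }v)$. The same functions compute membership in $J_k$ and $H_k$ for the root of any locally finite rooted tree in which every vertex is at finite distance from a leaf; in particular, applying them to the root $R$ of the {\tt sin}-tree $\TT$, whose fringe decomposition is $(\TT^0,\TT^1,\ldots)$, we get $\phi_k^J(\TT)=\ind\{R\in J_k(\TT)\}$ and $\phi_k^H(\TT)=\ind\{R\in H_k(\TT)\}$.

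Now fix $m\ge 0$ and put $g_m:=\sum_{k=0}^m(\phi_k^J-\phi_k^H)$, a bounded continuous function on $\T_\ast^\infty$ with $|g_m|\le 1$. Since $\Phi(\TT_n,\cdot)$ assigns mass $M_n^{-1}$ to the fringe sequence of each vertex,
\[
\int g_m \, d\Phi(\TT_n,\cdot) = \frac{1}{M_n}\sum_{k=0}^m\bigl(\#J_k(\TT_n)-\#H_k(\TT_n)\bigr),
\]
and probability fringe convergence together with the continuous mapping theorem (the map $\nu\mapsto\int g_m\,d\nu$ is continuous on the space of probability measures on $\T_\ast^\infty$) show that this converges in distribution --- necessarily in probability, the limit being deterministic --- to $\E[g_m(\TT)]=\sum_{k=0}^m\bigl(\prob\{R\in J_k(\TT)\}-\prob\{R\in H_k(\TT)\}\bigr)$. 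The discarded tail $M_nF_n(\{0\})-\sum_{k=0}^m(\#J_k(\TT_n)-\#H_k(\TT_n))$ has absolute value at most the number $M_n-\#I_{m+1}(\TT_n)$ of vertices of $\TT_n$ not yet isolated after $m+1$ rounds; and $M_n^{-1}\bigl(M_n-\#I_{m+1}(\TT_n)\bigr)=\int h_m\,d\Phi(\TT_n,\cdot)$, where $h_m(\text{fringe of }v)=\ind\{v\notin I_{m+1}(\TT_n)\}$ is again bounded and continuous by the same locality argument, converges in probability to $\eps_m:=\prob\{R\notin I_{m+1}(\TT)\}$.

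To finish one must check that $\eps_m\downarrow 0$, i.e. that the process isolates $R$ almost surely in finitely many rounds. This uses the single-end structure of $\TT$: each $\TT^i$ is a \emph{finite} tree, so every $\mathbf{f}_k(\TT)$ consists of finitely many finite components together with one semi-infinite ``upper spine'' component having a leaf within a bounded distance of its lowest spine vertex, and the index of the lowest spine vertex present in $\mathbf{f}_k(\TT)$ strictly increases at least once in every bounded block of rounds, so that every vertex --- in particular $R$ --- becomes isolated after finitely many rounds. Writing $c:=\sum_{k=0}^\infty\bigl(\prob\{R\in J_k(\TT)\}-\prob\{R\in H_k(\TT)\}\bigr)$, the inequality
\[
\bigl|F_n(\{0\})-c\bigr| \le \int h_m\,d\Phi(\TT_n,\cdot) + \Bigl|\int g_m\,d\Phi(\TT_n,\cdot)-\E[g_m(\TT)]\Bigr| + \bigl|\E[g_m(\TT)]-c\bigr|,
\]
in which $|\E[g_m(\TT)]-c|\le\eps_m$, lets us send $n\to\infty$ and then $m\to\infty$ to conclude that $F_n(\{0\})\to c$ in probability, hence in distribution, which is the claim. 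The step I expect to be the main obstacle is the locality bookkeeping of the second paragraph --- verifying carefully, uniformly in $n$, that membership in $J_k(\TT_n)$ and $H_k(\TT_n)$ is a bounded continuous function of boundedly many fringe coordinates --- since this is precisely what makes probability fringe convergence applicable; the tail estimate is comparatively routine once the one-end structure of $\TT$ is used.
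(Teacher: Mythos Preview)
Your proof is correct and follows the same overall strategy as the paper's: express $\delta(\TT_n)$ via Lemma~\ref{lem:countmatch} as a sum over vertices of a locally determined functional, pass each truncated sum to the limit by probability fringe convergence, and control the tail. Two differences deserve comment.

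First, the paper opens by invoking Theorem~\ref{theo:esd}(b), which already gives convergence of $F_n(\{0\})$ in distribution to a constant; since $F_n(\{0\})\in[0,1]$ is bounded, it then suffices to identify the limit by computing $\lim_n \E[F_n(\{0\})]$. You instead establish convergence in probability directly via the triangle inequality. Both routes work; the paper's is marginally shorter.

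Second, and more interestingly, your tail bound takes a different path. The paper controls the remainder by the simple inclusion $\{V\notin I_m(\TT_n)\}\subseteq\{\text{the subtree below }V\text{ has at least }2m-1\text{ vertices}\}$, the right side depending only on the first fringe coordinate $t_0$, so that its probability converges to $\prob\{\#\TT^0\ge 2m-1\}\to 0$ immediately from finiteness of $\TT^0$. You instead treat $\{V\notin I_{m+1}(\TT_n)\}$ itself as a local event, pass to $\eps_m=\prob\{R\notin I_{m+1}(\TT)\}$, and then argue $\eps_m\to 0$ from the dynamics on the infinite sin-tree. This is valid, but your justification (``the index of the lowest spine vertex strictly increases at least once in every bounded block of rounds'') is imprecise---the block length depends on the realization. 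A cleaner formulation of the same idea: the limiting edge set $\bigcap_k E_k(\TT)$ is a subforest with no leaves, hence has no edges in a locally finite one-ended tree, and local finiteness then forces each vertex to be isolated at some finite stage; dominated convergence gives $\eps_m\to 0$. The paper's inclusion bypasses this entirely by reducing the tail to a statement about $\#\TT^0$ alone.
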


\begin{proof}
In view of  Theorem~\ref{theo:esd}(b),  its enough to prove the convergence of $F_n(\{0\})$ to the desired quantity in expectation. 
If $V$ is a vertex chosen uniformly at random from $\TT_n$,
 then, by Lemma~\ref{lem:countmatch} we can write $\E [F_n(\{0\})] =  M_n^{-1} \E [\delta(\TT_n)]$ as 
\begin{equation}
\label{eq:inf_series}
\begin{split}
&\sum_{k=0}^{\infty} 
\left(  
M_n^{-1} \E[\# J_{k}(\TT_n)]   -  M_n^{-1} \E [\# H_{k}(\TT_n)]
\right) \\  
& \quad = 
\sum_{k=0}^{\infty} 
\left( \prob \{ V \in J_{k}(\TT_n)\} - \prob \{ V \in H_{k}(\TT_n)  \} 
\right). \\
\end{split}
\end{equation}
 
Given a tree $\mathbf{t} \in \T$ with root
$\rho$ and a vertex $v \in \mathbf{t}$, 
write $\NN_k(v,\mathbf{t})$ for the subtree of 
$\mathbf{t}$ induced by vertices that
are at graph distance at most $k$ from $v$. 
Note that whether or not a vertex 
$v$ of $\mathbf{t}$ belongs to the sets 
$H_k(\mathbf{t})$ or $J_k(\mathbf{t})$ can be
determined by examining the neighborhood $\NN_{2k+4}(v, \mathbf{t})$.
Observe also that 
$(t_0, t_1, \ldots, t_h, \ast, \ast, \ldots) \in \T_\ast^\infty$
is the decomposition of $\mathbf{t}$ relative to $\rho$ and $v$,
then $\NN_k(v, \mathbf{t})$ can be reconstructed from 
$(t_0, t_1, \ldots, t_{k \wedge h})$.

Recall that $J_{k}(\TT_n)$ and $H_{k}(\TT_n)$ are both subsets of
$I_{k+1}(\TT_n) \setminus I_{k}(\TT_n)$, and so
\[
\left|
\prob \{ V \in J_{k}(\TT_n)\} - \prob \{ V \in H_{k}(\TT_n)  \}
\right|
\le
\prob \{ V \in I_{k+1}(\TT_n) \setminus I_{k}(\TT_n)  \}.
\]
Moreover, for any nonnegative integer $m$,
\[
\begin{split}
& \sum_{ k =m}^{\infty} \prob \{ V \in I_{k+1}(\TT_n) \setminus I_{k}(\TT_n)  \} \\
& \quad =  \prob \{ V \in V(\TT_n) \setminus I_{m}(\TT_n)  \}\\
& \quad \le \prob\{\text{the subtree of $\TT_n$  below $V$  contains at least $2m-1$ vertices }\}\\
& \quad \rightarrow  \prob\{ \#\TT^0 \ge 2m-1 \}
\end{split}
\]
as $n \rightarrow \infty$, 
by the assumption of probability fringe convergence.
The last term clearly converges to $0$ as $m \rightarrow \infty$. 
A similar argument shows that the 
analogous series involving the limiting 
{\tt sin}-tree is also absolutely convergent.

Finally, it follows from the assumption of probability
fringe convergence and our observations above about
membership of $H_{k}(\TT)$ and $J_k(\TT)$ being locally determined
that for each $k \ge 1$, the first $k$ terms of the series \eqref{eq:inf_series} converge to the corresponding terms of the desired infinite series involving the limiting {\tt sin}-tree.
\end{proof}

By construction, for any tree $\mathbf{t}$, $G_k(\mathbf{t}) \subseteq L_k(\mathbf{t})$
and $L_k(\mathbf{t}) \setminus G_k(\mathbf{t}) \subseteq  J_k(\mathbf{t})$.
Set $K_k(\mathbf{t}) := J_k(\mathbf{t}) \setminus (L_k(\mathbf{t}) \setminus G_k(\mathbf{t}))$.
That is, $K_k(\mathbf{t})$ consists of vertices that become isolated due to
Type I steps during iterations $i_k+1, \ldots, i_{k+1}$ of the Karp-Sipser
algorithm but are of distance at least $2$  from the leaves in the forest present after iteration $i_k$;
for example, if $\mathbf{t}$ has vertices $\{1,2,3,4,5\}$ and adjacent integers
are joined by edges, then $J_0(\mathbf{t}) = \{1,3,5\}$, $L_0(\mathbf{t}) \setminus G_0(\mathbf{t})
= L_0(\mathbf{t}) = \{1,5\}$, and $K_0(\mathbf{t}) = \{3\}$.
Note that for each $v \in H_k(\mathbf{t})$ there exists $u \in L_k(\mathbf{t}) \setminus G_k(\mathbf{t})$ such that $\{ v, u\} \in E_k(\mathbf{t})$. Also, if $v_1,  v_2$
are distinct elements of $H_k(\mathbf{t})$ and 
$u_1, u_2 \in L_k(\mathbf{t}) \setminus G_k(\mathbf{t})$  are such
that $\{ v_1, u_1\}, \{ v_2, u_2\}  \in E_k(\mathbf{t})$, then 
$u_1$ and $u_2$ are also distinct. Consequently, $\# L_k(\mathbf{t})  - \# G_k(\mathbf{t}) - \# H_k(\mathbf{t}) \ge 0$.  Applying this observation to $\TT_n$,
dividing by $M_n$, and taking the limit as $n \to \infty$, 
we deduce that  the formula in Proposition~\ref{prop:limmatch} for
the limit of $F_n(\{0\})$ may be written as a sum over $k$ of the sum of the two
nonnegative terms
$ \prob\{R \in L_{k}(\TT)\} - \prob\{R \in G_{k}(\TT)\} - \prob\{R \in H_{k}(\TT)\}$ and $\prob\{R \in K_{k}(\TT)\}$.
We may give good lower bounds for the first few of 
these summands with relative ease.

We first find a lower bound on $\prob\{R \in L_{0}(\TT)\} - \prob\{R \in G_{0}(\TT)\} - \prob\{R \in H_{0}(\TT)\} $. 
Note for any tree $\mathbf{t}$ with $3$ or more vertices
that $G_0(\mathbf{t}) =\emptyset$. 
Observe also that
\begin{align*}
&\#L_0(\TT_n) - \#H_0(\TT_n)\\
&= \sum_{ m=2}^\infty (m-1) \times \# \{ u \in H_0(\TT_n) : u \text{ is connected to exactly $m$ vertices in } L_0(\TT_n)  \} \\
&\ge \sum_{ m=2}^\infty (m-1) \times \# \{ u \in V(\TT_n) : \text{the subtree below $u$ is an $m$-star}   \},
\end{align*}
where by a $m$-star we mean a unlabeled rooted tree with $(m+1)$ vertices in which the root is connected to each of the other $m$ vertices via an edge. Therefore,
\begin{align*}
\prob\{R \in L_{0}(\TT)\}  - \prob\{R \in H_{0}(\TT)\} &=  \lim_{ n \to \infty} M_n^{-1}( \expec [\#L_0(\TT_n)] - \expec [\#H_0(\TT_n)] )\\
&\ge \sum_{ m=2}^\infty (m-1) \times \prob \{ \TT^0 \text{ is an $m$-star} \}.
\end{align*}
On the other hand, it is easy to check that
$\prob\{ R \in K_0(\TT) \} \ge  \prob\{  \TT^0 \in \T'', \, \TT^1 \in \T' \}$, where 
$\T' \subseteq \T$ is the set of finite unlabeled rooted trees for which
the root has at least one child that has no children,
and $\T'' \subseteq \T$ is the 
set of finite unlabeled rooted trees for which the root has single child
and that child in turn has at least one child that has no children.

As one might expect, finding good lower bounds on the terms $\prob\{R \in L_{k}(\TT)\} - \prob\{R \in G_{k}(\TT)\} - \prob\{R \in H_{k}(\TT)\} $ and $\prob\{R \in K_{k}(\TT)\} $ becomes increasingly difficult as $k$ gets larger. However, we can still get crude lower bounds by computing the probability of appearance of special kinds of trees in the first few fringes in the limiting {\tt sin}-tree. For example,
\[
\begin{split}
&\prob\{R \in L_{k}(\TT)\} - \prob\{R \in G_{k}(\TT)\} - \prob\{R \in H_{k}(\TT)\}  \\
& \quad \ge \prob\{ \TT^0 \text{ is a complete binary tree of depth $(2k+1)$ } \\
& \qquad \text{ and } \TT^i = \bullet \text{ for } 1 \le i \le 2k-2  \}.\\
\end{split}
\]
where $\bullet$ denotes the rooted tree with a single vertex. The proof follows along the same lines as the $k=0$ case above. 
Furthermore,
\[
\begin{split}
&\prob\{R \in K_{k}(\TT)\} \\
&\quad \ge \prob\{ \TT^0 \text{ is a path  of length $(2k+2)$ },  \\
& \qquad \TT^i = \bullet \text{ for }1 \le i \le 2k, \\
& \qquad  \text{ and } \TT^{2k+1} \text{ is a } 1\text{-star}   \}.\\
\end{split}
\]

For the ensemble of linear preferential attachment trees
with parameter $a=0$, 
it is well known (see, for example, \cite{durrett}) that the proportion of vertices with degree $d$ converges in distribution to 
$p_d =  4/ d(d+1)(d+2)$. 
Specializing to $d=1$, we see that
$n^{-1} \# L_0(\TT_n)$ converges in distribution to  $2/3$, 
and so $ \prob \{ R \in L_0(\TT) \}=2/3$. Hence, 
\[
\lim_{n\to\infty} F_n(\{0\})  \ge  \prob \{ R \in L_0(\TT) \}  -  \prob \{ R \in H_0(\TT) \} \ge 2\prob \{ R \in L_0(\TT) \}  - 1  =1/3.
\]
 
Now consider the ensemble of random recursive trees. 
Recall Construction~\ref{const:sin}(a).
Let $\xi_i, \xi'_i, i \ge 1$ and $X$ be i.i.d.\ exponential random variables with rate $1$. To get a lower bound on $\lim_{n\to\infty} F_n(\{0\})$, we may use the inequality $ \lim_{n\to\infty} F_n(\{0\}) \ge \sum_{m=2}^\infty (m-1) \times \prob \{ \TT^0 \text{ is an $m$-star} \}$ where 
\begin{align*}
\prob \{ \TT^0 \text{ is an $m$-star} \}  
&\ge \E \left[ \prob \left \{ \sum_{i=1}^m \xi_i \le X,  \sum_{i=1}^{m+1} \xi_i  > X \, \Big | \, X \right\} \prod_{i=1}^m \prob \{ \xi'_i > X \, | \, X \} \right] \\
&=  \E \left[ \prob \left\{ \sum_{i=1}^m \xi_i \le X,  \sum_{i=1}^{m+1} \xi_i  > X \, \Big | \, X \right\}  e^{-mX} \right] \\
&= \E \left[ e^{-X} \frac{X^m}{m!} e^{-mX} \right] \\
&= \frac{1}{m!} \int_0^\infty x^m e^{-(m+2)x} \, dx \\
&= (m+2)^{-(m+1)}.\\
\end{align*} 

For the uniform random trees, we can easily obtain
 lower bounds for various terms using the description of the fringes of the limiting {\tt sin}-tree in terms of critical Poisson Galton-Watson trees. 
 For example, 
\[ 
\prob \{ \TT^0 \text{ is an $m$-star} \} 
= \frac{e^{-1}}{ m!} \times (e^{-1})^m 
= \frac{e^{-(m+1)} }{m!},
\]
\[ \prob \{ \TT^1 \in \T'\} 
= 1 - \sum_{i=0}^\infty \frac{e^{-1}}{i!} \times (1- e^{-1} )^i 
= 1 - e^{-1}e^{ 1 - e^{-1} }, 
\]
and
\[ \prob \{ \TT^0 \in \T''\} 
= e^{-1} \times ( 1 - e^{-1}e^{ 1 - e^{-1} } ).
\]
Therefore,
\begin{align*}
\lim_{n} F_n(\{0\}) &\ge \sum_{m=2}^\infty (m-1) \times \prob \{ \TT^0 \text{ is a $m$-star} \}  +  \prob \{ \TT^0 \in \T'',  \TT^1 \in \T'\}\\
&= e^{-1} (1- (1 -e^{-1} )e^{e^{-1}} ) + e^{-1} ( 1 - e^{-1}e^{ 1 - e^{-1} } )^2.
\end{align*}

It's worth mentioning that \cite{bauer-kernel} showed that the expected portion of zero eigenvalues of the empirical spectral distribution converges to $2x_*-1$  where $x_*$ is the unique real root of the equation $x=e^{-x}$. But they do not prove any concentration result. In that sense, Proposition \ref{prop:limmatch} in the special case of uniform random trees
 completes their result. 
 \section{Largest eigenvalues: Proof of Theorem~\ref{theo:max-eigen}}

We first recall from
Proposition~\ref{prop:various-embedding}(b) 
how $\TT_n$, the linear preferential attachment tree on
$n$ vertices with parameter $a > -1$, 
can be constructed from a particular 
continuous-time branching process.  
 
Denote by $N_a = (N_a(t))_{t \ge 0}$ a pure birth process that
starts with a single progenitor and when there have
been $k$ births a new birth occurs
at rate $k+1+a$. Recall that $\FF(t) \in \T$ is
 the family tree at time $t \ge 0$
of the continuous-time branching process in which
the birth process of each individual is a copy
of $N_a$.   Then, $\TT_n$ has the same distribution as
$\FF(T_n)$, where $T_n := \inf\{t> 0: \# \FF(t) = n\}$.

We now record some useful facts about the birth process $N_a$.
Recall the Malthusian rate of growth parameter $\gamma_a := a+2$.

\begin{Lemma}
\label{lemma:ld-yule}
(a) For any fixed time $t \ge 0$, the random variable
$\prob\{N_0(t) = k\} = (1-e^{-t})^k e^{-t}$, $k=0,1,\ldots$.  
That is,
$N_0(t)$ is distributed as a geometric random variable with  success probability $e^{-t}$.

\noindent
(b) For $a> -1$, set $A :=\lceil a + 1 \rceil$. Then,
\[
\prob\{N_a(t) > K e^{t}\} \leq   A e^{-\frac{K}{A}}
\]
for all $K > 0$ and $t \ge 0$.
\end{Lemma}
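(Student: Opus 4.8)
The plan is to reduce both statements to the explicit law of the classical linear birth (Yule) process.

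For part~(a), the key observation is that $N_0(\cdot)+1$ is an ordinary Yule process started from a single individual: each individual reproduces at rate $1$, so when there have been $k$ births the total birth rate is $k+1$, which is exactly the $a=0$ dynamics. I would then read off the law of $N_0(t)$ from the successive holding times: the first passage time of $N_0$ to level $k$ is $\sigma_k=\sum_{j=1}^k\eta_j$, where the $\eta_j$ are independent and $\eta_j$ is exponential of rate $j$. By the R\'enyi representation of exponential order statistics, $\sigma_k$ has the same law as $\max(V_1,\dots,V_k)$ for i.i.d.\ rate-one exponentials $V_1,\dots,V_k$, so $\prob\{N_0(t)\ge k\}=\prob\{\sigma_k\le t\}=(1-e^{-t})^k$; subtracting the corresponding quantity for level $k+1$ gives $\prob\{N_0(t)=k\}=(1-e^{-t})^ke^{-t}$. (Equivalently, one verifies by induction on $k$ that $p_k(t):=(1-e^{-t})^ke^{-t}$ solves the forward equations $p_0'=-p_0$ and $p_k'=kp_{k-1}-(k+1)p_k$ with $p_0(0)=1$, $p_k(0)=0$.)

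For part~(b), set $A=\lceil a+1\rceil$, so that $1+a\le A$ and hence the birth rate $k+1+a$ of $N_a$ in state $k$ is at most $k+A$. The plan has three steps. First, couple $N_a$ with a dominating pure-birth process $\widetilde N$ that jumps from $k$ to $k+1$ at rate $k+A$, so that $N_a(t)\le\widetilde N(t)$ for all $t$ (the standard monotone coupling of birth processes with ordered, state-monotone rates). Second, note that $\widetilde N(\cdot)+A$ is a Yule process started from $A$ individuals, hence is the superposition of $A$ independent Yule processes each started from a single individual, so $\widetilde N(t)\ed\sum_{i=1}^A N_0^{(i)}(t)$ for independent copies $N_0^{(1)},\dots,N_0^{(A)}$ of the process in part~(a). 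Third, apply a union bound and part~(a): since all $N_0^{(i)}(t)\le\frac{K}{A}e^t$ would force $\sum_i N_0^{(i)}(t)\le Ke^t$,
\[
\prob\{N_a(t)>Ke^t\}\le\prob\Big\{\sum_{i=1}^AN_0^{(i)}(t)>Ke^t\Big\}\le A\,\prob\Big\{N_0(t)>\tfrac{K}{A}e^t\Big\},
\]
and, since $N_0(t)$ is integer-valued, part~(a) together with $1-x\le e^{-x}$ gives
\[
\prob\Big\{N_0(t)>\tfrac{K}{A}e^t\Big\}\le\prob\Big\{N_0(t)\ge\tfrac{K}{A}e^t\Big\}=(1-e^{-t})^{\lceil(K/A)e^t\rceil}\le e^{-K/A}.
\]
Multiplying by $A$ yields the claimed estimate.

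I do not expect any genuine obstacle: the only steps needing a little care are in part~(b), namely arranging the coupling so that $N_a(t)\le\widetilde N(t)$ holds pathwise (for instance, by driving both processes off the same Poisson clocks and checking the inequality is preserved across jumps) and recognising the rate-$(k+A)$ birth process as a superposition of $A$ Yule processes. Both are routine facts about linear birth processes, and everything else is elementary arithmetic.
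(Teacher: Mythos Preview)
Your proposal is correct and follows essentially the same route as the paper: for (a) you identify $N_0+1$ as a Yule process (the paper simply cites this as well-known), and for (b) you couple $N_a$ with a pure birth process of rate $k+A$, recognise this as a superposition of $A$ independent copies of $N_0$, and finish with a union bound together with the tail estimate $(1-e^{-t})^m\le e^{-me^{-t}}$. The only cosmetic differences are that you supply an explicit derivation of the geometric law via the R\'enyi representation (the paper cites \cite{Norris98}) and use $\lceil (K/A)e^t\rceil$ where the paper uses $\lfloor (K/A)e^t\rfloor+1$; both choices give the same bound.
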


\begin{proof}
For part (a), note that $N_0+1$ is a Yule process -- the
birth rate in state $\ell$ is $\ell$ -- and the
claimed distribution is well-known, see for example \cite{Norris98}, Chapter 2.

To prove part (b), suppose that $M = (M(t))_{t \ge 0}$ is a Yule
process started in state $A$ (that is, $M$
is pure birth process and the birth rate in state $\ell$ is $\ell$).
Then, $(M(t) - A)_{t \ge 0}$ is a pure birth process that starts
in state $0$ and has birth rate $\ell + A \ge \ell + 1 + a$ in state $\ell$.
It is therefore possible to couple $M$ and $N_a$ in such a way that
$N_a(t) \le M(t) - A$ for all $t \ge 0$.  Observe that
$M$ has the same distribution as $\sum_{i=1}^A (N^i_0 + 1)$, and so
$M - A$ has the same distribution as $\sum_{i=1}^A N^i_0$.
We could prove (b) using the fact that $M(t)$ is distributed
as the number of trials before the $A^{\mathrm{th}}$ success in
in a sequence of independent Bernoulli trials with 
common success probability $e^{-t}$, but it is more straightforward
to use a simple union bound.

Observe that from part (a) and the inequality $1-x \le \exp(-x)$
that, for any $C \ge 0$,
\[
\begin{split}
\prob\{N_0(t) > C e^t\} 
& = (1 - e^{-t})^{\lfloor C e^t \rfloor + 1} \\
& \le (\exp(-e^{-t}))^{\lfloor C e^t \rfloor + 1} \\
& = \exp(-e^{-t} (\lfloor C e^t \rfloor + 1)) \\
& \le e^{-C}, \\
\end{split}
\]
and hence,
\[
\begin{split}
\prob\{N_a(t) > K e^{t}\}
& \le
\sum_{i=1}^A \prob\{N_0^i(t) > \frac{K}{A} e^t\} \\
& \le
A e^{-\frac{K}{A}}. \\
\end{split}
\]
\end{proof}

\begin{Theorem}
\label{theo:off-point}
(a) There exists a random variable $Z_a >0$ such that 
\[
\lim_{t \rightarrow \infty} \frac{\# \FF(t) }{e^{\gamma_a  t}} 
= Z_a \quad \text{almost surely}.
\]

\noindent
(b) There exists a constant $C$ such that 
$\expec[\# \FF(t)] \le C e^{\gamma_a  t}$.

\noindent
(c) For the random variable $Z_a$ of part (a),
\[
\lim_{n \rightarrow \infty}
T_n - \frac{1}{\gamma_a}\log{n} = -\log Z_a \quad \text{almost surely}.
\]

\noindent
(d) There exists a random variable $W_a>0$ such that
\[\lim_{t \rightarrow \infty} \frac{N_a(t)}{e^{t}} 
= W_a \quad \text{almost surely}. \]
\end{Theorem}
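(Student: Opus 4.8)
The plan is to recognize all four statements as standard facts about supercritical continuous-time (Crump--Mode--Jagers) branching processes applied to the specific birth process $N_a$, together with a direct analysis of $N_a$ itself, and to assemble them in the order (d), (a), (b), (c) since (d) is elementary and feeds naturally into the Malthusian setup, while (c) is a soft consequence of (a).

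For part (d), I would observe that $N_a$ is a pure birth process whose holding time in state $k$ is exponential with rate $k+1+a$. Writing $N_a(t) = \sum_{j\ge 0} \ind\{\beta_{j+1} \le t\}$ where $\beta_{j+1} = \sum_{i=0}^{j}\eta_i$ and the $\eta_i$ are independent exponentials of rate $i+1+a$, the classical approach is to look at $e^{-t} N_a(t)$. One shows directly that $M(t) := e^{-t}(N_a(t) + 1 + a)$ — or a suitably normalized version — is a nonnegative martingale: differentiating the compensator, the process $N_a(t) - \int_0^t (N_a(s)+1+a)\,ds$ is a martingale, from which $e^{-t}(N_a(t)+1+a)$ is easily checked to be a martingale. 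Being nonnegative, it converges almost surely to a limit $W_a \ge 0$, and $e^{-t}N_a(t) \to W_a$. Positivity of $W_a$ follows because the martingale is $L^2$-bounded (the variance computation for a linear birth process is explicit and bounded in $t$), so it is uniformly integrable and $\E[W_a] = 1 + a > 0$ (or one can invoke the $a=0$ case via the geometric description in Lemma~\ref{lemma:ld-yule}(a), where $e^{-t}N_0(t) \to $ exponential$(1)$, and use the coupling/comparison $N_a \le \sum_{i=1}^A N_0^i$ from the proof of that lemma to control the upper tail, together with a matching lower comparison).

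For parts (a) and (b), the family tree $\FF(t)$ is the population of a supercritical CMJ process in which each individual reproduces according to the point process $N_a$; the Malthusian parameter $\alpha$ is the solution of $\int_0^\infty e^{-\alpha t}\,\mu(dt) = 1$ where $\mu(dt) = \E[N_a(dt)]$ is the intensity measure of births. Since $\E[N_a(t)] = \sum_{j\ge 0}\prod_{i=0}^{j}\frac{i+1+a}{i+2+a}\cdot(\text{Laplace-type terms})$ — more cleanly, $\frac{d}{dt}\E[N_a(t)] + \E[N_a(t)] = \text{(rate)}$ gives $\E[N_a(t)] \sim c\,e^{\gamma_a t}$ after accounting for the extra factor, and one verifies $\int_0^\infty e^{-\gamma_a t}\mu(dt) = 1$ with $\gamma_a = a+2$. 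Then the Nerman--Jagers theorem (\cite{MR1014449}, or \cite{nerman-jagers}) on the $L^1$ and almost-sure convergence of $e^{-\alpha t}$ times a general counting functional of a supercritical CMJ process, applied with the counting functional ``total number of individuals born by time $t$,'' yields $e^{-\gamma_a t}\#\FF(t) \to Z_a$ a.s.\ with $Z_a > 0$ on the survival event — and here survival is certain since individuals never die and always eventually reproduce. The expectation bound (b) is precisely the $L^1$ half of the same theorem, or can be obtained more cheaply from the renewal-type integral equation $\E[\#\FF(t)] = 1 + \int_0^t \E[\#\FF(t-s)]\,\mu(ds)$ by a standard Gronwall/renewal argument giving the matching exponential upper bound. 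The main obstacle here is purely bookkeeping: verifying the Malthusian condition $\int e^{-\gamma_a t}\mu(dt)=1$ and checking the mild integrability hypotheses ($x\log x$-type conditions on the reproduction point process) needed to invoke Nerman--Jagers; for $N_a$ these hold because all moments of $N_a(t)$ grow only like $e^{O(t)}$, controlled by the union bound in Lemma~\ref{lemma:ld-yule}(b).

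Finally, part (c) is immediate from (a): since $t \mapsto \#\FF(t)$ is non-decreasing, right-continuous, and integer-valued with jumps of size $1$, and $\#\FF(T_n) = n$, we have $\#\FF(T_n^-) \le n \le \#\FF(T_n)$, so $e^{-\gamma_a T_n}\#\FF(T_n) \to Z_a$ forces $n\,e^{-\gamma_a T_n} \to Z_a$ almost surely, i.e.\ $\gamma_a T_n - \log n \to \log Z_a$, equivalently $T_n - \frac{1}{\gamma_a}\log n \to -\frac{1}{\gamma_a}\log Z_a$. (If the paper's normalization intends $-\log Z_a$ rather than $-\frac{1}{\gamma_a}\log Z_a$, one simply redefines $Z_a$ to absorb the $\gamma_a$; I would state the convergence in whichever form matches the constant used later and note the trivial rescaling.) The only point requiring a word of care is that $T_n < \infty$ for all $n$ almost surely and $T_n \to \infty$, both of which follow from (a) (the population is finite at each finite time, since $e^{-\gamma_a t}\#\FF(t)$ has a finite limit, and it tends to infinity).
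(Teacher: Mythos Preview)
Your approach matches the paper's closely: parts (a) and (b) are deferred to the Jagers--Nerman theory for supercritical CMJ processes, part (c) is read off from (a) via $\#\FF(T_n)=n$, and part (d) is proved by the martingale $e^{-t}(N_a(t)+1+a)$ with $L^2$-boundedness coming from Lemma~\ref{lemma:ld-yule}(b). You also correctly flag the normalization in (c): the computation indeed gives $T_n-\tfrac{1}{\gamma_a}\log n\to -\tfrac{1}{\gamma_a}\log Z_a$, so the paper's stated limit $-\log Z_a$ should be read with $Z_a$ renormalized.

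One point to tighten in (d): from uniform integrability you get $\E[W_a]=1+a>0$, but that only yields $\Pr\{W_a>0\}>0$, not almost-sure positivity. The paper closes this gap with the coupling argument you mention parenthetically, and it does so in two cases: for $a\ge 0$ one couples so that $N_a(t)\ge N_0(t)$ and uses that $e^{-t}N_0(t)\to\mathrm{Exp}(1)>0$ a.s.; for $-1<a<0$ one waits for the first birth (so the rate is at least $1$ thereafter) and then dominates a copy of $N_0$ from that time on, giving $W_a\ge e^{-\tau}W_0>0$ a.s.\ with $\tau$ the first birth time. Your sketch (``a matching lower comparison'') is the right idea, but make the lower coupling explicit---the upper bound $N_a\le\sum_{i=1}^A N_0^i$ from Lemma~\ref{lemma:ld-yule} goes the wrong way for positivity.
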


\begin{proof} 
Parts (a) and (b)  (in a more general context) can be found in 
\cite{shanky-pref}, so we shall not give the proof here. 
They essentially follow from the general theory of continuous time branching processes developed by Jagers and Nerman.

Part (c) follows immediately from part (a) and the relationship
$\# \FF(T_n) = n$.

Turning to part (d), note that 
\[
N_a(t) - \int_0^t (N_a(s)+1+a) \, dt, \quad t \ge 0,
\]
is a local martingale with bounded variation.
Stochastic calculus shows that the process
$(e^{-t} \cdot (N_a(t)+ 1+a))_{t \ge 0}$ is
also a local martingale with bounded variation.
The fact that the latter process is bounded in 
$\mathbb{L}^2$ and hence, in particular,
a true martingale follows from Lemma \ref{lemma:ld-yule}(b).

It follows from the martingale convergence theorem that
$e^{-t} N_a(t)$ converges almost surely and in $\mathbb{L}^2$
to a random variable $W_a$.  

It remains to show that $W_a$ is strictly
positive almost surely.
Consider first the case $a \ge 0$.
From a comparison of branching
rates similar to that in the proof of Lemma~\ref{lemma:ld-yule},
it is possible to couple $N_a$ and $N_0$ so that
$N_0(t) \le N_a(t)$ for all $t \ge 0$.  Note
 that $W_0$ has an exponential distribution with mean $1$,
and so $W_a$ is certainly almost surely positive.

Consider now the case $-1 < a < 0$.
Let $\tilde N_a$ be $N_a$ started in
the initial state $1$ rather than $0$,
and put $\hat N_a = \tilde N_a - 1$.
Another comparison of branching
rates shows that
it is possible to couple $\hat N_a$ and $N_0$ so that $N_0(t) \le \hat N_a(t)$ for all $t \ge 0$.
Thus, 
$\lim_{t \rightarrow \infty} e^{-t} N_a(t)$ 
is stochastically greater than the strictly
positive random variable $e^{-\tau} W_0$,
where the random variable $\tau$ is independent
of $W_0$ and has the same distribution
as the time taken for $N_a$ to go from
$0$ to $1$ (that is, $\tau$ has
an exponential distribution with rate $1+a$).
\end{proof}

Fix $k\geq 1$. Recall that
$\Delta_{n,1} \geq \Delta_{n,2}\geq \cdots \geq \Delta_{n,k}$
are the $k$  largest out-degrees in $\TT_n$ (the out-degree
of a vertex is its number of children).  
We will show that the
vertices with these out-degrees occur in a {\em finite } neighborhood about the root and that the out-degrees of vertices in a finite neighborhood about the root converge in distribution
when properly normalized.

\begin{Lemma}
\label{L:root_biggest_degree}
In the branching process construction of the linear preferential attachment tree $\TT_n$, 
let $\Delta^{S}_{n,1}$ denote the maximum out-degree in $\TT_n$  among all vertices born before time $S$. 
Given any $\eps > 0$, there exists a finite
constant $S_\eps$ such that 
\[
\liminf_{n\to \infty} \prob\{\Delta^{S_\eps }_{n,1} = \Delta_{n,1}\} \geq 1-\eps.
\]
\end{Lemma}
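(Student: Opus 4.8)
The plan is to prove that, with probability close to $1$, the largest out-degree of $\TT_n = \FF(T_n)$ is attained by a vertex born before a fixed time $S_\eps$, by showing that one such early vertex — the root — already has out-degree of order $n^{1/\gamma_a}$, while no vertex born after $S_\eps$ can compete with this once $S_\eps$ is large. It is convenient to work with the Ulam--Harris marked-tree description of the branching process of Proposition~\ref{prop:various-embedding}(b): each potential individual $u$ carries its own independent copy $N^{(u)}$ of the pure-birth process $N_a$, its birth time $b_u$ is a deterministic function of the processes $N^{(w)}$ attached to the strict ancestors $w$ of $u$ (hence $b_u$ is independent of $N^{(u)}$), and an individual $u$ present at time $t$ has out-degree $D(u,t) = N^{(u)}(t - b_u)$.

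\textbf{Root lower bound.} Since the root is born at time $0$, for every $S > 0$ we have $\Delta^S_{n,1} \ge D(\root, T_n) = N^{(\root)}(T_n)$. By Theorem~\ref{theo:off-point}(d), $e^{-t} N^{(\root)}(t) \to W_a$ a.s.\ with $W_a > 0$, and because $T_n \to \infty$ a.s.\ this gives $e^{-T_n} N^{(\root)}(T_n) \to W_a$ a.s.; combined with $e^{-T_n} n^{1/\gamma_a} \to Z_a$ a.s.\ from Theorem~\ref{theo:off-point}(c) (where $Z_a > 0$ a.s.), we get $n^{-1/\gamma_a} N^{(\root)}(T_n) \to W_a / Z_a > 0$ a.s. Consequently there is $\delta = \delta(\eps) > 0$, \emph{not depending on $S$}, such that $\prob\{\Delta^S_{n,1} < \delta n^{1/\gamma_a}\} < \eps/2$ for all large $n$.

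\textbf{Sandwiching $T_n$ and the union bound over late vertices.} By Theorem~\ref{theo:off-point}(c) and $Z_a > 0$ a.s., fix $\zeta > 0$ with $\prob\{Z_a < \zeta\}$ small enough that the deterministic time $t_n := \gamma_a^{-1}\log n - \log\zeta + 1$ satisfies $\prob\{T_n > t_n\} < \eps/4$ for all large $n$; note $\delta n^{1/\gamma_a} = \delta\zeta e^{-1} e^{t_n}$. On $\{T_n \le t_n\}$ every vertex $v$ of $\TT_n$ has $b_v \le t_n$ and, by monotonicity of $N^{(v)}$, $D(v, T_n) \le N^{(v)}(t_n - b_v)$. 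It therefore suffices to bound $\prob\{\exists v:\ S \le b_v \le t_n,\ N^{(v)}(t_n - b_v) \ge \delta n^{1/\gamma_a}\}$. Partition the possible birth times into windows $[S+j, S+j+1)$, $j \ge 0$. If $b_v \in [S+j, S+j+1)$ then, writing $\delta n^{1/\gamma_a} = \delta\zeta e^{-1} e^{b_v} e^{t_n - b_v}$ and using $b_v \ge S+j$, the event in question forces $N^{(v)}(t_n - b_v) \ge K_j e^{t_n - b_v}$ with $K_j := \delta\zeta e^{-1} e^{S+j}$. Conditioning on $b_v$ (independent of $N^{(v)}$) and applying the large-deviation estimate of Lemma~\ref{lemma:ld-yule}(b), each fixed $v$ contributes at most $A' e^{-K_j / A'}$ for a constant $A'$ depending only on $a$ (valid once $S$ is large enough that $K_0 \ge 2$, which handles the discrepancy between $\ge$ and $>$). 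Summing over $v$ and using that the expected number of vertices born by time $S+j+1$ is at most $C e^{\gamma_a(S+j+1)}$ by Theorem~\ref{theo:off-point}(b), the window-$j$ term is at most $A' C e^{\gamma_a(S+j+1)} e^{-K_j/A'}$. Since $K_j = \beta_S e^{j}$ with $\beta_S := \delta\zeta e^{-1} e^{S}$ and $e^{j} \ge 1 + j$, for $S$ large the sum over $j \ge 0$ is dominated by a convergent geometric series and hence is at most $2 A' C e^{\gamma_a(S+1)} e^{-\beta_S/A'}$, a bound that is independent of $n$ and tends to $0$ as $S \to \infty$. Fixing $S = S_\eps$ so that this bound is $< \eps/4$, we conclude $\prob\{\exists v:\ b_v \ge S_\eps,\ D(v,T_n) \ge \delta n^{1/\gamma_a}\} < \eps/2$ for all large $n$.

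\textbf{Conclusion and the main difficulty.} On the complement of the two bad events (total probability $< \eps$ for large $n$) we have $\Delta^{S_\eps}_{n,1} \ge \delta n^{1/\gamma_a}$ and every vertex born at time $\ge S_\eps$ has out-degree $< \delta n^{1/\gamma_a}$, so the global maximum out-degree is attained by an early vertex, i.e.\ $\Delta^{S_\eps}_{n,1} = \Delta_{n,1}$; hence $\liminf_n \prob\{\Delta^{S_\eps}_{n,1} = \Delta_{n,1}\} \ge 1 - \eps$. The crux — and the step I expect to require the most care — is the union bound: a naive bound over all $\sim n$ vertices produces a term of order $n \cdot e^{-\mathrm{const}}$ that does not vanish, and the point is that a vertex born at time $b$ must accumulate an out-degree of order $e^{b}\cdot e^{\mathrm{age}}$ to beat the root, so its failure probability is \emph{double}-exponentially small in $b$, which comfortably defeats the merely exponential (in $b$) growth of the number of candidate vertices; the window decomposition together with Lemma~\ref{lemma:ld-yule}(b) is exactly what turns this into an $n$-uniform convergent sum. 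The secondary nuisance that $T_n$ is random and correlated with the rest of the construction is dispatched by the deterministic sandwich in the second step.
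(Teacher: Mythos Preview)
Your proof is correct and follows essentially the same route as the paper's: compare the root's out-degree (of order $n^{1/\gamma_a}$ via Theorem~\ref{theo:off-point}(d)) to that of any late-born vertex, replace the random time $T_n$ by a deterministic upper envelope $t_n$ via Theorem~\ref{theo:off-point}(c), and then run a window-by-window union bound combining the population estimate of Theorem~\ref{theo:off-point}(b) with the tail bound of Lemma~\ref{lemma:ld-yule}(b). The only cosmetic difference is that the paper uses a two-sided sandwich $[\tmi,\tpl]$ for $T_n$ and bounds the root's degree at $\tmi$, whereas you invoke the almost-sure limit $n^{-1/\gamma_a} N^{(\root)}(T_n) \to W_a/Z_a$ directly; both lead to the same double-exponential-beats-exponential conclusion.
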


\begin{proof} 
Fix $\eps>0$.  Write $A_{n,S}$ for the event
\begin{quote}
{\em
There exists a vertex in $\TT_n$ which was born after time $S$ and
has out-degree greater than the root. 
}
\end{quote}
The claim of the lemma may be rephrased as a statement
that there is a finite constant $S_\eps$ such that
\[\limsup_{n\to\infty} \prob(A_{n, S_\eps}) \leq \eps.\]

Note from Theorem~\ref{theo:off-point}(c)  
that there is a constant $B_\eps$ such that 
\[
\limsup_{n\to\infty} \prob\left\{\bigg |T_n - \frac{1}{\gamma_a} \log{n} \bigg | > B_\eps \right\} \leq \eps/2.
\]
Set $\tmi := \frac{1}{\gamma_a} \log{n}-B_\eps$ and $\tpl := \frac{1}{\gamma_a}\log{n} + B_\eps$. 
It is enough to prove 
that there exists a finite constant $S_\eps$ such that 
\[\limsup_{n\to \infty} \prob(A_{n,S_\eps}^\prime) \leq \eps/2,\]
where $A_{n,S}^\prime$ is the event
\begin{quote}
{\em
 There exists a vertex
 born after time $S$ that has out-degree greater than the root for some time $t$  in the interval $[\tmi , \tpl]$ . 
}
\end{quote}

Furthermore, since the out-degrees of vertices 
increase with time, it is enough to show that 
\[\limsup_{n\to\infty} \prob(A_{n,S_\eps}^{\prime\prime}) \leq \eps/2,\]
where $A_{n,S}^{\prime\prime}$ is the event
\begin{quote}
{\em 
There exists a vertex born after time after time $S$
such that the out-degree of the vertex at time $\tpl$ is greater than the out-degree of the root at time $\tmi$. 
}
\end{quote}

For $t \ge 0$ and a time interval $I \subseteq [0, t]$ denote by $Z(I,t)$ the maximum out-degree at time $t$ of all vertices born in the time interval $I$. 
Let $\zeta_\rho(t)$ denote the out-degree of the root at time $t$. Note that 
\[
A_{n,S}^{\prime \prime} = \{Z([S, \tpl],\tpl) > \zeta_\rho(\tmi) \}. 
\]
Observe also, that for any constant $K$,
\begin{align*}
\prob(A_{n,S}^{\prime\prime}) 
&\le \prob\left\{\zeta_\rho(\tmi)\le K \text{ or }Z([S, \tpl],\tpl) > K \right\}\\ 
& \le \prob\{\zeta_\rho(\tmi)\le K\} 
+ \prob\{ Z([S_\eps, \tpl],\tpl) > K\}.
\end{align*}
It thus suffices to show that there is a sequence 
$K_n$ and a constant $S_\eps$ such that
\begin{equation}
\label{X_bigger_Kn}
\limsup_{n\to\infty} \prob\{\zeta_\rho(\tmi) \le K_n\} \le \eps/4
\end{equation}
and
\begin{equation}
\label{Z_smaller_Kn}
\limsup_{n\to\infty}  \prob\{Z([S, \tpl],\tpl) > K_n\} \leq \eps/4. 
\end{equation}

It follows from Theorem~\ref{theo:off-point}(d) that the
inequality \eqref{X_bigger_Kn} holds with
$K_n = K_\eps n^{1/\gamma_a}$ for a suitable constant
$K_\eps > 0$.

Turning to the inequality \eqref{Z_smaller_Kn}, 
assume without loss of generality
that $S$ and $\tpl$ are integers. In that case,
\[Z(S,\tpl) = \max_{S\leq m\leq \tpl-1} Z([m,m+1],\tpl). \]
Note that, by the union bound, 
\[ 
\prob\{Z([m,m+1],\tpl) > K_n\} \leq \expec[\# \FF(m+1)] \; \prob\{N_a(\tpl-m) > K_n\}. 
\]
Applying Theorem~\ref{theo:off-point}(b) and
Lemma \ref{lemma:ld-yule}(b) gives
\[
\prob\{Z(S,\tpl)> K_n\} 
\leq 
\sum_{m=S}^{\tpl-1} C e^{\gamma_a(m+1)} A e^{- C^\prime e^{m}},
\]
where $C^\prime = K_\eps/ (A e^{B_\eps})$. 
The inequality \eqref{Z_smaller_Kn} follows upon
choosing $S = S_\eps$ large enough.
\end{proof}

A slightly more detailed analysis shows that 
Lemma~\ref{L:root_biggest_degree} can be generalized to
the $k$ maximal out-degrees for any fixed $k$.   
Let $\Delta_{n,1}^{S} \geq \Delta_{n,2}^{S} \geq \cdots \geq \Delta_{n,k}^{S}$ be the $k$ largest out-degrees in
$\TT_n$ from among the vertices that are born before time $S$, 
with the convention that $\Delta_i^{S,n} = 0$ for $i\geq \# \mathcal F(S)$ 
when $\# \mathcal F(S) < k$. 
We leave the proof of the following
result to the reader.

\begin{Lemma}
\label{lemma:nbhd-equal-k}
For any $\eps> 0$ there exists a finite constant
$S_\eps$ such that 
\[\liminf_{n\to\infty} \prob\{ (\Delta_{n,1}^{S_\eps}, \ldots, \Delta_{n,k}^{S_\eps} ) =  (\Delta_{n,1}, \ldots, \Delta_{n,k} ) \} \geq 1-\eps.\]
\end{Lemma}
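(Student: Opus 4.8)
The plan is to imitate the proof of Lemma~\ref{L:root_biggest_degree} and to reduce the claim to a two-sided estimate. As there, Theorem~\ref{theo:off-point}(c) lets us restrict attention to the event $\{|T_n - \gamma_a^{-1}\log n| \le B_\eps\}$ for a suitable constant $B_\eps$, an event of asymptotic probability at least $1-\eps/4$; on it, the out-degree at the random time $T_n$ of any vertex lies between its value at $\tmi = \gamma_a^{-1}\log n - B_\eps$ and its value at $\tpl = \gamma_a^{-1}\log n + B_\eps$. It then suffices to produce a finite constant $S_\eps$ and a constant $K_\eps>0$ such that, with asymptotic probability at least $1-\eps/2$: (i) at least $k$ vertices of $\TT_n$ born before time $S_\eps$ have out-degree at time $\tmi$ strictly larger than $K_\eps n^{1/\gamma_a}$, and (ii) every vertex born after time $S_\eps$ has out-degree at time $\tpl$ at most $K_\eps n^{1/\gamma_a}$. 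Indeed, on the intersection of these events, out-degrees being nondecreasing in time, (i) gives $\Delta_{n,k}^{S_\eps} > K_\eps n^{1/\gamma_a}$ while (ii) shows that every vertex born after $S_\eps$ has out-degree at time $T_n$ at most $K_\eps n^{1/\gamma_a}$; since $\Delta_{n,k}^{S_\eps}\le\Delta_{n,k}$, no vertex born after $S_\eps$ can be among those realizing the $k$ largest out-degrees of $\TT_n$, and as passing to a subset cannot increase an order statistic we conclude $(\Delta_{n,1}^{S_\eps},\ldots,\Delta_{n,k}^{S_\eps}) = (\Delta_{n,1},\ldots,\Delta_{n,k})$.

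For step (i) I would use the root together with its first $k-1$ children, born at the almost surely finite times $0 = \sigma_0 < \sigma_1 < \cdots < \sigma_{k-1}$; choosing a first constant $S_\eps'$ large enough makes $\prob\{\sigma_{k-1}>S_\eps'\}<\eps/4$. The root's own birth process and each child's offspring process are independent copies $N_a^{(0)}, N_a^{(1)},\ldots,N_a^{(k-1)}$ of the pure birth process $N_a$, so the out-degree at time $\tmi$ of the $i$th of these vertices equals $N_a^{(i)}(\tmi - \sigma_i)$. By Theorem~\ref{theo:off-point}(d), $e^{-s}N_a^{(i)}(s) \to W_a^{(i)}$ almost surely with $W_a^{(i)}>0$; hence this out-degree is asymptotic to $W_a^{(i)} e^{-\sigma_i} e^{-B_\eps}\, n^{1/\gamma_a}$ as $n\to\infty$. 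Since the $W_a^{(i)}$ are almost surely strictly positive and the $\sigma_i$ almost surely finite, there is a constant $K_\eps>0$ for which the probability that some one of these $k$ out-degrees fails to exceed $K_\eps n^{1/\gamma_a}$ is at most $\eps/4$ for all large $n$; on this event $\TT_n$ in particular has at least $k$ vertices born before $S_\eps'$, so the convention defining $\Delta_{n,i}^{S_\eps}$ causes no trouble.

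For step (ii), with $K_\eps$ now fixed, I would repeat the union-bound argument of Lemma~\ref{L:root_biggest_degree} essentially verbatim. Taking $S_\eps$ and $\tpl$ to be integers and writing $Z([m,m+1],\tpl)$ for the maximal out-degree at time $\tpl$ among vertices born during $[m,m+1]$, Theorem~\ref{theo:off-point}(b) and Lemma~\ref{lemma:ld-yule}(b) give
\[
\prob\{Z([S_\eps,\tpl],\tpl) > K_\eps n^{1/\gamma_a}\} \;\le\; \sum_{m=S_\eps}^{\tpl-1} C\, e^{\gamma_a(m+1)}\, A\, e^{-C' e^{m}}, \qquad C' := \frac{K_\eps}{A e^{B_\eps}},
\]
and the right-hand side is at most $\eps/4$ once $S_\eps$ is large enough, because the summand decays doubly exponentially in $m$. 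Choosing the constants in the order $B_\eps$, then $S_\eps'$, then $K_\eps$, then $S_\eps := \max(S_\eps', S_\eps'')$ with $S_\eps''$ large enough for the last display, and intersecting the four events (the $T_n$-concentration event, $\{\sigma_{k-1}\le S_\eps\}$, the event in (i), and the event in (ii)), each of asymptotic probability at least $1-\eps/4$, yields the lemma.

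The only genuinely new ingredient over the case $k=1$ handled in Lemma~\ref{L:root_biggest_degree} is step (i): one must control $k$ out-degrees simultaneously at the scale $n^{1/\gamma_a}$, and what makes this work is the almost sure strict positivity of the Malthusian martingale limits $W_a^{(i)}$ attached to the first $k$ individuals of the branching process, which is precisely Theorem~\ref{theo:off-point}(d). The upper bound and the bookkeeping of the four $\eps/4$ error budgets are direct transcriptions of the $k=1$ argument, so I expect no real obstacle there.
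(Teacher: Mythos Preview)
Your proposal is correct and is precisely the ``slightly more detailed analysis'' the paper alludes to: the paper does not actually write out a proof of this lemma, instead indicating that it follows by adapting the argument for Lemma~\ref{L:root_biggest_degree}, and your write-up carries out exactly that adaptation. The only new ingredient beyond the $k=1$ case---producing $k$ early-born vertices whose out-degrees are each of order $n^{1/\gamma_a}$, via the root and its first $k-1$ children together with Theorem~\ref{theo:off-point}(d)---is identified and handled correctly, and the upper bound (step~(ii)) and bookkeeping are direct transcriptions of the earlier proof.
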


\begin{Proposition} \label{proposition:local_max_degree}
Fix $S> 0$ and consider the marked tree $\TT^{\#}_n$ constructed
by marking each vertex $v$ of the tree $\FF(S)$  with  $n^{-1/\gamma_a}D(v, n) \in \mathbb R_+ $, where $D(v,n)$ is the out-degree of $v$ in $\TT_n$. Then,
$\TT_n^{\#}$ converges almost surely as $n\to \infty$ to the
tree $\FF(S)$ equipped with a set of marks that are
strictly positive almost surely.  
\end{Proposition}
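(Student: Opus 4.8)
The plan is to fix $S > 0$ and work on the branching-process construction of Proposition~\ref{prop:various-embedding}(b), in which $\TT_n$ has the law of $\FF(T_n)$. The tree $\FF(S)$ is a fixed finite tree (once we condition on the Poisson-type birth data up to time $S$), and what must be shown is that, for each vertex $v$ of $\FF(S)$, the rescaled out-degree $n^{-1/\gamma_a} D(v,n)$ converges almost surely to a strictly positive limit as $n \to \infty$. The key observation is that, for a vertex $v$ born at time $\sigma_v \le S$ in $\FF(S)$, the subtree of descendants of $v$ evolves as an independent copy of the whole continuous-time branching process, started at time $\sigma_v$; in particular the out-degree $D(v,t)$ of $v$ itself at time $t$ is exactly the pure birth process $N_a$ run for time $t - \sigma_v$, i.e. $D(v,t) \ed N_a(t - \sigma_v)$, and these are coupled consistently across all $v \in \FF(S)$ using the single realization of the branching process.

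First I would invoke Theorem~\ref{theo:off-point}(c), which gives $T_n - \frac{1}{\gamma_a}\log n \to -\log Z_a$ almost surely with $Z_a > 0$; equivalently $n e^{-\gamma_a T_n} \to Z_a$ a.s., so $e^{T_n} = n^{1/\gamma_a} Z_a^{-1/\gamma_a} (1 + o(1))$ a.s. Next, by Theorem~\ref{theo:off-point}(d) applied to the birth process of vertex $v$, we have $e^{-(t-\sigma_v)} N_a^{(v)}(t-\sigma_v) \to W_a^{(v)}$ a.s. as $t \to \infty$, where $W_a^{(v)} > 0$ a.s. and the $W_a^{(v)}$, $v \in \FF(S)$, are obtained from the same realization (they are distinct, genuinely random limits, not deterministic). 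Combining these at $t = T_n$: since $\# \FF(T_n) = n \to \infty$ forces $T_n \to \infty$ a.s., we get
\[
n^{-1/\gamma_a} D(v,n) = n^{-1/\gamma_a} N_a^{(v)}(T_n - \sigma_v)
= n^{-1/\gamma_a} e^{\sigma_v}\, e^{T_n - \sigma_v} \cdot e^{-(T_n-\sigma_v)} N_a^{(v)}(T_n-\sigma_v)
\longrightarrow Z_a^{-1/\gamma_a} e^{-\sigma_v}\, e^{\sigma_v} W_a^{(v)}
\]
wait --- more carefully, $n^{-1/\gamma_a} e^{T_n - \sigma_v} = n^{-1/\gamma_a} e^{T_n} e^{-\sigma_v} \to Z_a^{-1/\gamma_a} e^{-\sigma_v}$ a.s., so the product converges a.s. to the strictly positive random variable $Z_a^{-1/\gamma_a} e^{-\sigma_v} W_a^{(v)}$. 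Since $\FF(S)$ has finitely many vertices, taking the intersection of the almost-sure events over $v \in \FF(S)$ (and over the finitely many possible realizations of $\FF(S)$, or rather conditioning on $\FF(S)$) yields the joint almost-sure convergence of the marked tree $\TT_n^\#$ to $\FF(S)$ with the mark vector $(Z_a^{-1/\gamma_a} e^{-\sigma_v} W_a^{(v)})_{v \in \FF(S)}$, all entries strictly positive.

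The main obstacle, and the point needing care rather than a one-line appeal, is establishing the \emph{joint} almost-sure convergence with the correct identification of which quantities are shared. Theorem~\ref{theo:off-point}(a),(c) and (d) are stated for a single branching process / single birth process; here I am using that $Z_a$ (governing $T_n$) is a functional of the \emph{whole} tree, while the $W_a^{(v)}$ are functionals of the \emph{disjoint} descendant subtrees, so there is no circularity and no independence is even needed --- everything lives on one probability space and one only needs the a.s. statements to hold simultaneously, which is automatic for a countable (here finite) collection. One should also note that $\sigma_v$ and $\FF(S)$ are $\mathcal{F}_S$-measurable and finite, so multiplying by $e^{-\sigma_v}$ is harmless. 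A remark worth including is that the limiting marks are deterministic multiples of the martingale limits $W_a^{(v)}$ of the individual birth processes, rescaled by the global constant $Z_a^{-1/\gamma_a}$; this is precisely what feeds into the proof of Theorem~\ref{theo:max-eigen}, since combined with Lemma~\ref{lemma:nbhd-equal-k} it localizes the $k$ largest out-degrees to $\FF(S_\eps)$ and then reads off their joint limit law.
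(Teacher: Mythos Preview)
Your argument is correct and follows essentially the same route as the paper: for each vertex $v\in\FF(S)$ you combine the almost-sure convergence of the individual birth process (Theorem~\ref{theo:off-point}(d)) with the almost-sure asymptotics of $T_n$ (Theorem~\ref{theo:off-point}(c)), and then intersect over the finitely many vertices of $\FF(S)$. The only cosmetic difference is the choice of reference time: the paper shifts to time $S$ and works with the residual birth process $\hat\zeta_v(t)=\zeta_v(S+t)-\zeta_v(S)$, which, conditional on $\FF(S)$, has the law of $N_{a+\zeta_v(S)}$ and limit $W_{a+\zeta_v(S)}$; you instead go back to the birth time $\sigma_v$ and use the original process $N_a^{(v)}(\cdot-\sigma_v)$ with limit $W_a^{(v)}$. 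Both choices lead to the same conclusion, and the limiting marks in the two parameterizations are related by $W_{a+\zeta_v(S)}=e^{S-\sigma_v}W_a^{(v)}$ almost surely, so nothing substantive changes.
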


\begin{proof} 
For any vertex $v\in \FF(S)$, write $\zeta_v(t)$ for the
 out-degree (that is, number of offspring) of  $v$ at time $t$, so that  $\zeta_v(T_n) = D(v, n)$. Note that $ \zeta_v(S)$ can be computed by only looking at  $\FF(S)$ (recall that our trees are  rooted).  Conditional on
$\FF(S) $, the processes $\hat \zeta_v := (\zeta_v(S+t) - \zeta_v(S))_{t \ge 0}$, 
$v \in \FF(S)$,  are conditionally independent.
Note that the conditional distribution of $\hat \zeta_v$ is that
of a pure birth process that starts in state $0$ and has birth rate
$\zeta_v(S) + \ell + 1 + a$ in state $\ell$.
It follows from Theorem~\ref{theo:off-point}(d) that
$e^{-t} \hat \zeta_v (t)$ converges almost surely as $t \rightarrow \infty$
to a random variable that has a conditional distribution which is that of
the strictly positive random variable 
$W_{a + \zeta_v(S)}$. Hence, by Theorem~\ref{theo:off-point}(c),
\[
\lim_{n \rightarrow \infty} e^{-(T_n - S)} \hat \zeta_v (T_n - S)
=
\lim_{n \rightarrow \infty} e^S Z_a n^{-1/\gamma_a} \hat \zeta_v (T_n - S)
\]
exists almost surely and the limit is strictly positive almost surely.

The result follows because
$n^{-1/\gamma_a} D(v, n)
=
n^{-1/\gamma_a} (\zeta_v(S) + \hat \zeta_v (T_n - S))$.
\end{proof}

\begin{Corollary}
\label{cor:local_max_degree}
The random vector
${n^{-1/\gamma_a}}( \Delta^{S}_{n,1}, \ldots, \Delta^{S}_{n,k}) $ 
converges almost surely to  a random vector $(Y^S_1, Y^S_2, \ldots, Y^S_k)$
as $n\to \infty$, where $Y^{S}_1 \ge Y^S_2 \ge \ldots \ge Y^S_k > 0$
almost surely.  
\end{Corollary}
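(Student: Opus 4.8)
The plan is to read off the corollary directly from Proposition~\ref{proposition:local_max_degree}, by noting that $\big(\Delta^S_{n,1},\ldots,\Delta^S_{n,k}\big)$ is, up to the normalization $n^{-1/\gamma_a}$, nothing but the vector of the $k$ largest marks of the marked tree $\TT^{\#}_n$. First I would record that by Theorem~\ref{theo:off-point}(b) we have $\expec[\#\FF(S)] \le C e^{\gamma_a S} < \infty$, so $\#\FF(S)$ is almost surely finite; write $N := \#\FF(S)$ for this random but $n$-independent integer. The vertices of $\TT_n$ born before time $S$ are precisely the vertices of $\FF(S)$, and the out-degree of such a vertex $v$ in $\TT_n$ is $D(v,n)$; hence, by definition, $\Delta^S_{n,j}$ is the $j$-th largest element of the finite multiset $\{D(v,n) : v \in \FF(S)\}$, with the convention that it equals $0$ when $j > N$. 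Thus $n^{-1/\gamma_a}\big(\Delta^S_{n,1},\ldots,\Delta^S_{n,k}\big)$ is exactly the vector of the top $k$ order statistics, arranged in decreasing order and zero-padded, of the family of marks of $\TT^{\#}_n$.

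Next I would invoke Proposition~\ref{proposition:local_max_degree}: almost surely, as $n \to \infty$ the marks $n^{-1/\gamma_a} D(v,n)$ converge simultaneously over $v \in \FF(S)$ to limits $m_v$ that are strictly positive almost surely. Since the underlying tree $\FF(S)$ does not depend on $n$, this is a genuine statement of joint almost sure convergence of the finite random vector $\big(n^{-1/\gamma_a}D(v,n)\big)_{v \in \FF(S)}$ to $(m_v)_{v \in \FF(S)}$ in $\mathbb{R}^N$. The map $\mathbb{R}^N \to \mathbb{R}^k$ that sends a vector to its $k$ largest coordinates arranged in decreasing order (padding with zeros when $N < k$) is continuous --- each output coordinate can be written as a maximum over subsets of a minimum over that subset --- so continuity of this functional together with the almost sure convergence above yields that $n^{-1/\gamma_a}\big(\Delta^S_{n,1},\ldots,\Delta^S_{n,k}\big)$ converges almost surely to the vector $(Y^S_1,\ldots,Y^S_k)$ obtained by applying the same functional to $(m_v)_{v \in \FF(S)}$. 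The inequalities $Y^S_1 \ge \cdots \ge Y^S_k$ are immediate from the construction.

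It then remains to address positivity. On the event $\{\#\FF(S) \ge k\}$ the quantity $Y^S_k$ is the $k$-th largest among the strictly positive numbers $\{m_v : v \in \FF(S)\}$, so $Y^S_k > 0$; on the complementary event one only gets $Y^S_j > 0$ for $j \le \#\FF(S)$, with $Y^S_j = 0$ for larger $j$ by the stated zero-padding convention. Since $\#\FF(S) \to \infty$ almost surely as $S \to \infty$ by Theorem~\ref{theo:off-point}(a), one has $\prob\{\#\FF(S) \ge k\} \to 1$, which is precisely the form in which the corollary will be applied (together with Lemma~\ref{lemma:nbhd-equal-k}, with $S$ taken large) in the proof of Theorem~\ref{theo:max-eigen}. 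I do not expect any serious obstacle here: essentially all of the probabilistic content is already packaged into Proposition~\ref{proposition:local_max_degree}, and the only points requiring a little care are the continuity of the order-statistics functional on $\mathbb{R}^N$ and the bookkeeping for the zero-padding convention when $\#\FF(S) < k$.
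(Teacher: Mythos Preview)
Your proposal is correct and follows exactly the approach the paper intends: the corollary is stated immediately after Proposition~\ref{proposition:local_max_degree} with no separate proof, so it is meant to be read off from that proposition via continuity of the order-statistics map, precisely as you do. You also correctly flag the small imprecision in the statement's claim that $Y^S_k > 0$ almost surely, which only holds on the event $\{\#\FF(S) \ge k\}$; this does not affect the application to Theorem~\ref{theo:max-eigen}, where $S = S_\eps$ is taken large.
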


\noindent
{\bf Completion of the proof of Theorem~\ref{theo:max-eigen}.  }  
Given Corollary~\ref{cor:local_max_degree} and 
Lemma~\ref{lemma:nbhd-equal-k}, the
proof is completed by applying the following elementary
result with $X_{n,i} = n^{ - 1/\gamma_a} {\Delta}_{n,i}$ and $Y_{n, i}^{\eps} = n^{ - 1/\gamma_a} { \Delta}_{n,i}^{ S_\eps}$.

\begin{Lemma} 
\label{lemma:tight-k}
Let $({\bf X}_n)_{n=1}^\infty 
= ((X_{n,1},\ldots, X_{n,k}))_{n=1}^\infty$ be a sequence of $\Rbold^k$-valued random variables. Suppose for each fixed $\eps>0$ that there exists a sequence of  $\Rbold^k$-valued  random variables $({\mathbf Y}_n^{(\eps)})_{n=1}^\infty =  ((Y_{n,1}^\eps, \ldots, Y_{n,k}^\eps))_{n=1}^\infty$ on the same
probability space such that  
\[\liminf_{n\to\infty} \prob\{{\mathbf X}_n = {\mathbf Y}_n^{(\eps)}\} \geq 1-\eps.\]
Suppose further that for each $\eps>0$ there exists a random vector
${\mathbf Y}_\infty^{(\eps)}$ such that
${\mathbf Y}_n^{(\eps)}$ converges in probability to  
${\mathbf Y}_\infty^{(\eps)}$
as $n \rightarrow \infty$.
Then, there exists an $\Rbold^k$-valued random variable ${\mathbf X}_\infty$ such that ${\mathbf X}_n$ converges in probability to ${\mathbf X}_\infty$
as $n\to\infty$. 
\end{Lemma}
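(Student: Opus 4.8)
The plan is to show that $(\mathbf{X}_n)_{n=1}^\infty$ is a Cauchy sequence in probability and then to invoke the completeness of the space of (equivalence classes of) $\Rbold^k$-valued random variables under convergence in probability; recall that this topology is metrized by the complete metric $d(U,V):=\Ex[\|U-V\|\wedge 1]$. Once the Cauchy property is established there is automatically an $\Rbold^k$-valued random variable $\mathbf{X}_\infty$ to which $\mathbf{X}_n$ converges in probability, which is exactly the conclusion we want.

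To verify the Cauchy property, fix $\delta>0$ and $\eps>0$. First I would note that, since $\mathbf{Y}_n^{(\eps)}$ converges in probability to $\mathbf{Y}_\infty^{(\eps)}$ as $n\to\infty$, the sequence $(\mathbf{Y}_n^{(\eps)})_{n=1}^\infty$ is itself Cauchy in probability, so there is an integer $n_0(\delta,\eps)$ with
\[
\prob\{\|\mathbf{Y}_m^{(\eps)}-\mathbf{Y}_n^{(\eps)}\|>\delta\}\le\eps\quad\text{for all }m,n\ge n_0(\delta,\eps).
\]
Next, the hypothesis $\liminf_{n\to\infty}\prob\{\mathbf{X}_n=\mathbf{Y}_n^{(\eps)}\}\ge 1-\eps$ furnishes an integer $n_1(\eps)$ such that $\prob\{\mathbf{X}_n\ne\mathbf{Y}_n^{(\eps)}\}\le 2\eps$ for all $n\ge n_1(\eps)$. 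Combining these via the elementary inclusion
\[
\{\|\mathbf{X}_m-\mathbf{X}_n\|>\delta\}\subseteq\{\mathbf{X}_m\ne\mathbf{Y}_m^{(\eps)}\}\cup\{\mathbf{X}_n\ne\mathbf{Y}_n^{(\eps)}\}\cup\{\|\mathbf{Y}_m^{(\eps)}-\mathbf{Y}_n^{(\eps)}\|>\delta\}
\]
and the union bound yields $\prob\{\|\mathbf{X}_m-\mathbf{X}_n\|>\delta\}\le 5\eps$ for all $m,n\ge\max(n_0(\delta,\eps),n_1(\eps))$. Since $\eps>0$ was arbitrary we get $\limsup_{m,n\to\infty}\prob\{\|\mathbf{X}_m-\mathbf{X}_n\|>\delta\}=0$, and since $\delta>0$ was arbitrary, $(\mathbf{X}_n)$ is Cauchy in probability, which finishes the proof.

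There is essentially no serious obstacle here: the only points requiring (routine) care are that convergence in probability of a single sequence entails the Cauchy property, and that the coupling hypothesis $\mathbf{X}_n=\mathbf{Y}_n^{(\eps)}$ is assumed to hold on a common probability space, so that the union bound above is legitimate. One could alternatively avoid explicit appeal to completeness by showing directly that $\eps\mapsto\mathbf{Y}_\infty^{(\eps)}$ is Cauchy in probability as $\eps\downarrow 0$ and that its limit serves as $\mathbf{X}_\infty$, but routing through the Cauchy property of $(\mathbf{X}_n)$ itself is the most economical route.
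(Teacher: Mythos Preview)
Your proof is correct and follows essentially the same route as the paper: both arguments show that $(\mathbf{X}_n)$ is Cauchy for convergence in probability via a three-term triangle inequality through $\mathbf{Y}_m^{(\eps)}$ and $\mathbf{Y}_n^{(\eps)}$, and then invoke completeness of the Ky Fan metric $d(U,V)=\Ex[\|U-V\|\wedge 1]$. The only cosmetic difference is that the paper works directly with the metric $r$ (obtaining the bound $2\eps$), whereas you phrase the Cauchy property in terms of $\prob\{\|\mathbf{X}_m-\mathbf{X}_n\|>\delta\}$ (obtaining $5\eps$); these formulations are equivalent and the argument is the same.
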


\begin{proof}
Convergence in probability for the space of
$\Rbold^k$-valued random variables is metrized by the metric $r$, where
$r(\mathbf{X},\mathbf{Y}) := \expec[|\mathbf{X}-\mathbf{Y}| \wedge 1]$ and
where $| \cdot |$ denotes the Euclidean norm on $\Rbold^k$.
Moreover, this metric space is complete.
By assumption,
\[
\begin{split}
\limsup_{m,n \rightarrow \infty} r(\mathbf{X}_m, \mathbf{X}_n)
& \le
\limsup_{m \rightarrow \infty} r(\mathbf{X}_m, \mathbf{Y}_m^{(\eps)}) \\
& \quad +
\limsup_{m,n \rightarrow \infty} r(\mathbf{Y}_m^{(\eps)}, \mathbf{Y}_n^{(\eps)})
+
\limsup_{n \rightarrow \infty} r(\mathbf{Y}_n^{(\eps)}, \mathbf{X}_n) \\
& \le 2 \limsup_{n \rightarrow \infty} \prob\{\mathbf{Y}_n^\eps \ne \mathbf{X}_n\} \\
& \le 2 \eps. \\
\end{split}
\]
The sequence $(\mathbf{X}_n)_{n=1}^\infty$ is thus  Cauchy
in the metric $r$, and hence it converges in probability
to a limit $\mathbf{X}_\infty$.
\end{proof}

\section{An example: the random comb}
\label{S:comb_example}

We have shown in previous sections that under quite general conditions 
the empirical spectral distributions of the
adjacency matrices for many ensembles of random trees converge
to a deterministic probability distribution as the number of vertices goes
to infinity, and we have been able to deduce various properties of the limit.
However, we have not identified the limit explicitly, except
in highly regular, deterministic examples where the answer was already known.
In this section we present an extremely simple ensemble of random trees,
describe some of the ingredients that might go into
identifying the limit, and conclude that even in this case  ``closed form''
expressions for moments of the limit seem difficult to come by.

Consider the following construction of a random finite graph
$\GG_n$ for a given positive integer $n$.
The set of vertices of $\GG_n$
is $\{1,\ldots,2n\}$.  Let
$\varepsilon_{n1}, \ldots, \varepsilon_{nn}$
be independent, identically distributed random variables, with
$\mathbb{P}\{\varepsilon_{nk} = 0\} = \mathbb{P}\{\varepsilon_{nk} = 1\} 
= \frac{1}{2}$. 
There is always an edge between the vertices $2k-1$ and $2k+1$ for
$1 \le k \le n-1$, and there is an edge between the vertices
$2k-1$ and $2k$ for $1 \le k \le n$ if and only if
$\varepsilon_{nk}=1$.  There are no other edges.

The graph $\GG_n$ consists of a large connected
component $\TT_n$ with
vertices
\[
\{1,3,\ldots,2n-1\} \cup \{2k : 1 \le k \le n, \; \varepsilon_{nk} = 1\}
\]
and the (possibly empty)
set of isolated points
$\{2k : 1 \le k \le n, \; \varepsilon_{nk} = 0\}$.
Note that the graph $\TT_n$
is a tree.  It resembles a comb with some of the teeth
missing, see Figure~\ref{fig: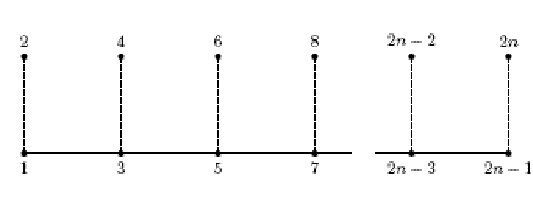}.

\begin{figure}
	\begin{center}
	\includegraphics[width=0.85\textwidth]{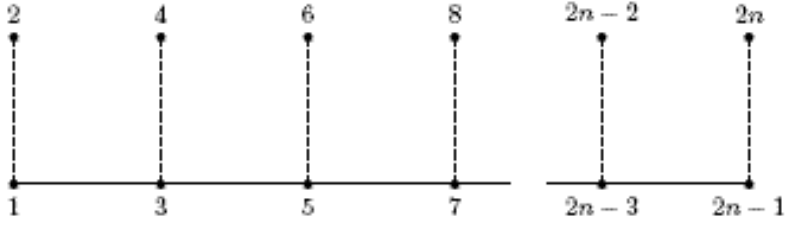}
	\end{center}
	\caption{In the above graph $\GG_n$, every horizontal edge always occurs whereas  every vertical (dotted) edge occurs independently with probability $1-q$. The comb tree $\TT_n$ is the unique connected component of $\GG_n$ that is not an isolated point.}
	\label{fig:comb_picture_clip.eps}
\end{figure}

Observe that, irrespective of the choice of the root, the
sequence of random finite trees $(\TT_n)_{n=1}^\infty$
will not converge in the probability fringe sense
because the cardinality of the subtree below a uniformly
chosen point does not converge in distribution to
a finite random variable.  However, if we look at the
empirical distribution of the subtrees spanned 
by the vertices within graph distance
$k$ of a vertex $v$ as $v$ ranges over the
$\TT_n$, then that this random measure
converges to a deterministic limit for every $k$.
This observation
and the fact that the vertices of $\TT_n$ have degree
at most $3$ shows that the moments of the spectral
distribution of $\TT_n$
converge as $n \rightarrow \infty$ to finite constants,
 and further that these
constants are the moments of a unique probability distribution. 
Thus,
the spectral distribution of $\TT_n$ converges in distribution
to a deterministic limit as $n \rightarrow \infty$,
and in order to compute the
moments of that limit it suffices to compute
the limits of the expectations of the moments of the
spectral distribution of $\TT_n$

Write $Z_n = \# \{1 \le k \le n : \varepsilon_{nk} = 0\}$.
By permuting indices, it is possible to re-write the
adjacency matrix of $\GG_n$ in block form, where
the upper-left  block has dimensions $(2n-Z_n) \times (2n-Z_n)$
and is the adjacency matrix of $\TT_n$, while the
lower-right block is the zero matrix of dimensions $Z_n \times Z_n$.
Therefore, if we write $F_n$ (respectively, $H_n$) for the
empirical distribution of the eigenvalues of the adjacency
matrix of $\TT_n$ (respectively, $\GG_n$),
then
\[
H_n =
\frac{Z_n}{2n} \delta_0
+ \left(1 - \frac{Z_n}{2n}\right) F_n,
\]
where $\delta_0$ is the unit point mass at $0$.
Since $Z_n/n$ converges in probability to $1/2$ as
$n \rightarrow \infty$, the limiting behavior
of $F_n$ is determined by that of $H_n$ and
{\em vice-versa}: $H_n$ converges in probability
to a non-random probability measure $H$ and
\[
H = \frac{1}{4} \delta_0 + \frac 3 4 F,
\]
where the probability measure $F$ is the limit of
the sequence $F_n$.  In particular, to compute
$F$ it suffices to compute $H$.

Now define a random infinite graph $\GG$ with vertex set
$\mathbb{Z}$ as follows.
Let $\varepsilon_k$, $k \in \mathbb{Z}$,
be independent, identically distributed random variables, with
$\mathbb{P}\{\varepsilon_{k} = 0\} =  \mathbb{P}\{\varepsilon_{k} = 1\} 
= \frac{1}{2}$.
There is an edge between the vertices $2k-1$ and $2k+1$ for all
$k \in \mathbb{Z}$, and there is an edge between the vertices
$2k-1$ and $2k$ for $k \in \mathbb{Z}$ if and only if
$\varepsilon_{k}=1$.  There are no other edges.

Let $B_n$ (resp. $B$) denote the adjacency matrix of
$\GG_n$  (resp. $\GG$).  For each non-negative
integer $m$,
\begin{equation}
\label{eq:comb_m_moment}
\begin{split}
& \int x^m \, H(dx) \\
& \quad = \lim_{n \rightarrow \infty} \int x^m \, H_n(dx) \\
& \quad = \lim_{n \rightarrow \infty} \mathbb{E} \left[\int x^m \, H_n(dx)\right] \\
& \quad = \lim_{n \rightarrow \infty} \mathbb{E} \left[\frac{1}{2n} \Tr \,   B_n^m \right] \\
& \quad = \mathbb{E} \left[\frac{1}{2} (B^m)_{11} + \frac{1}{2} (B^m)_{22} \right] \\
& =
\frac12 \E
\left[ \Tr
\begin{pmatrix}
   (B^m)_{11} & (B^m)_{12} \\
   (B^m)_{21} & (B^m)_{22}
   \end{pmatrix}
\right].
\end{split}
\end{equation}
Note that since $B_{ij} =0$ for $|i-j|>2$, there is no problem defining $B^m$.  Note also that these moments are are zero when $m$ is odd.

This observation suggests that we
divide the matrix $B$ into $2 \times 2$ blocks with the rows
(resp. columns) of the $(i,j)^{\mathrm{th}}$ block
indexed by $\{2i+1, 2i+2\}$ (resp. $\{2j+1, 2j+2\}$)
and perform the matrix multiplications necessary to compute
the powers of $B$ blockwise.
The resulting block form matrix is block tridiagonal.  
The entries in the diagonals
above and below the main diagonal are always the matrix
\[
\Pi_x :=
\begin{pmatrix}
   1 & 0 \\
   0 & 0
   \end{pmatrix}.
\]
We use this notation because we can think of $\Pi_x$ as the matrix
for the orthogonal projection onto the $x$-axis in a two-dimensional
$(x,y)$ coordinate system.  The entry in the $(k,k)$ diagonal block
is the matrix
\[
\begin{pmatrix}
   0 & \varepsilon_{k+1} \\
   \varepsilon_{k+1} & 0
   \end{pmatrix}.
\]
If $\varepsilon_{k+1} = 1$, then this is the matrix
\[
\Sigma :=
\begin{pmatrix}
   0 & 1 \\
   1 & 0
   \end{pmatrix}
\]
that permutes the two coordinates.  Otherwise, it
is the $2 \times 2$ zero matrix.

By analogy with the definition of $\Pi_x$, set
\[
\Pi_y :=
\begin{pmatrix}
   0 & 0 \\
   0 & 1
   \end{pmatrix}.
\]

The following relations between these various matrices
will be useful in computing the powers of the matrix $B$.
\[
\begin{split}
\Pi_x^2 &= \Pi_x \\
\Pi_y^2 &= \Pi_y \\
\Pi_x \Pi_y &= \Pi_y \Pi_x = 0 \\
\Sigma^2 &= I \\
\Sigma \Pi_x &= \Pi_y \Sigma \\
\Sigma \Pi_y &= \Pi_x \Sigma\\
\Tr \,   \Pi_x &= \Tr \,   \Pi_y = 1 \\
\Tr \,   \Sigma \Pi_x &=  \Tr \,   \Pi_x \Sigma = 0 \\
\Tr \,   \Sigma \Pi_y &=  \Tr \,   \Pi_y \Sigma = 0. \\
\end{split}
\]

A consequence of these relations  is
the following.

\begin{Lemma}
\label{L:trace_of_product}
For $a \ge 0$,
\[
\Tr \, \Sigma^a
=
\begin{cases}
2, & \text{if $a$ is even,} \\
0, & \text{if $a$ is odd.}
\end{cases}
\]
For any $r \ge 1$,
\[
\Tr \,   \left(\Sigma^{a_1} \Pi_x \Sigma^{a_2} \Pi_x \cdots
\Sigma^{a_r} \Pi_x \Sigma^{a_{r+1}}\right)
=
\begin{cases}
1, & \text{if $a_1 + a_{r+1}, a_2, \ldots, a_r$ are all even,} \\
0, & \text{otherwise.}
\end{cases}
\]
\end{Lemma}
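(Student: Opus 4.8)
The plan is to prove both identities by reducing products of $\Sigma$, $\Pi_x$, and $\Pi_y$ to a normal form using the given relations, and then reading off the trace.

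For the first identity, I would simply invoke $\Sigma^2 = I$, so that $\Sigma^a = I$ when $a$ is even and $\Sigma^a = \Sigma$ when $a$ is odd; then $\Tr I = 2$ and $\Tr \Sigma = 0$ (the latter from the last two displayed trace relations, or directly since $\Sigma$ has zero diagonal).

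For the second identity, the first step is to push all the $\Sigma$'s to the left past the $\Pi_x$'s. Each time $\Sigma$ moves past a $\Pi_x$, the relations $\Sigma \Pi_x = \Pi_y \Sigma$ and $\Sigma \Pi_y = \Pi_x \Sigma$ turn it into a $\Sigma$ one slot to the left with the projector swapped between $\Pi_x$ and $\Pi_y$; moving $\Sigma^{a_i}$ past a single projector swaps that projector iff $a_i$ is odd. Carrying this out from left to right, I would collect all the exponents into a single leading power $\Sigma^{a_1 + a_2 + \cdots + a_{r+1}}$ and leave behind a word $P_1 P_2 \cdots P_r$, where $P_j \in \{\Pi_x, \Pi_y\}$ is determined by the parities: $P_j = \Pi_x$ if $a_1 + a_2 + \cdots + a_j$ is even and $P_j = \Pi_y$ otherwise (I would state this as an easy induction on $j$ rather than belabor it). Next, use $\Pi_x \Pi_y = \Pi_y \Pi_x = 0$ and $\Pi_x^2 = \Pi_x$, $\Pi_y^2 = \Pi_y$: the product $P_1 \cdots P_r$ is the zero matrix unless all $P_j$ are equal, in which case it collapses to that common projector. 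All $P_j$ equal forces $P_j = P_1 = \Pi_x$ (since $P_1 = \Pi_x$ always, as $a_1$ alone in the prefix sum for $j=1$ — wait, more precisely $P_1 = \Pi_x$ iff $a_1$ is even), and it forces $a_2, a_3, \ldots, a_r$ all even so that the running parity never flips after the first step. Finally I would use the cyclicity of the trace together with $\Sigma^a \Pi_x = \Pi_x$ or $\Sigma \Pi_x = \Pi_y \Sigma$ to evaluate $\Tr(\Sigma^{a_1 + \cdots + a_{r+1}} \Pi_x)$: this equals $\Tr \Pi_x = 1$ if $a_1 + \cdots + a_{r+1}$ is even and $\Tr(\Sigma \Pi_x) = 0$ if it is odd. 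Combining, the trace is $1$ exactly when $a_2, \ldots, a_r$ are even (so the word doesn't vanish, with $P_1 = \Pi_x$ requiring $a_1$ even, hence all of $a_1, a_2, \ldots, a_r$ even) and $a_1 + \cdots + a_{r+1}$ is even, which given $a_1, \ldots, a_r$ even is equivalent to $a_{r+1}$ even, i.e. to "$a_1 + a_{r+1}, a_2, \ldots, a_r$ all even" as stated.

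I do not expect any serious obstacle here; the only point requiring care is bookkeeping the parities correctly when commuting the $\Sigma$'s through, and making sure the "all $P_j$ equal" condition is translated into the exponent conditions exactly as in the statement. A clean way to organize this is to define, for a word $w$ in $\Sigma, \Pi_x, \Pi_y$, its reduced form and prove by induction on word length that conjugation by $\Sigma$ and the projector relations bring it to $\Sigma^\epsilon \Pi$ with $\epsilon \in \{0,1\}$ and $\Pi \in \{\Pi_x, \Pi_y, 0\}$; then the trace computation is immediate. The main subtlety — such as it is — is simply keeping the indexing consistent with the paper's convention $a_1 + a_{r+1}$ being grouped together, which falls out naturally from cyclicity of the trace.
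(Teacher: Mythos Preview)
Your approach matches the paper's: push all the $\Sigma$'s to one side, collapse the resulting string of projectors, and read off the trace. But your case analysis has a genuine gap in exactly the spot you flagged as needing care. You write ``All $P_j$ equal forces $P_j = P_1 = \Pi_x$'', then catch yourself (``$P_1 = \Pi_x$ iff $a_1$ is even'')---but you never handle the branch $P_1 = \Pi_y$. If $a_1$ is odd and $a_2,\ldots,a_r$ are all even, then every $P_j$ equals $\Pi_y$, the product of projectors collapses to $\Pi_y$ (not zero), and $\Tr(\Sigma^c\,\Pi_y) = 1$ when $c$ is even, which here forces $a_{r+1}$ odd as well. So the case ``$a_1, a_{r+1}$ both odd, $a_2,\ldots,a_r$ even'' also gives trace $1$. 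Your final condition ``all of $a_1,\ldots,a_{r+1}$ even'' is strictly stronger than the lemma's ``$a_1+a_{r+1}, a_2,\ldots,a_r$ all even'', and the equivalence you assert at the end is false: take $r=1$, $a_1=a_2=1$, so that $\Sigma\Pi_x\Sigma = \Pi_y\Sigma^2 = \Pi_y$ has trace $1$. The paper's proof treats both the $\Pi_x$ and $\Pi_y$ branches explicitly; once you do the same, the correct condition ($a_2,\ldots,a_r$ even and $a_1+a_{r+1}$ even) drops out.

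A smaller wrinkle: you say you push the $\Sigma$'s to the left, but the relation you invoke, $\Sigma\Pi_x = \Pi_y\Sigma$, moves $\Sigma$ to the right, and indeed your prefix-sum formula for $P_j$ is the one consistent with pushing right (which is what the paper does, ending with a trailing power $\Sigma^c$ rather than a leading one). This is harmless for the trace by cyclicity, but worth straightening out.
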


\begin{proof}
The first claim is obvious, because $\Sigma^a$ either the $2 \times 2$
identity matrix or $\Sigma$ depending on whether $a$ is even or odd.

For the second claim, first observe
that the  product in question may be rewritten as
\[
\Sigma^{b_1} \Pi_x \Sigma^{b_2} \Pi_x \cdots
\Sigma^{b_r} \Pi_x \Sigma^{b_{r+1}},
\]
where $b_\ell$ is $0$ or $1$ depending on whether $a_\ell$
is even or odd.  This in turn may be rewritten as
\[
\Pi_{z_1} \Pi_{z_2} \cdots \Pi_{z_r} \Sigma^c,
\]
where $z_\ell$, $1 \le \ell \le r$, is $x$ or $y$ depending on whether $b_1 + \cdots + b_\ell$ is even or odd, and
$c$ is $0$ or $1$ depending on whether
$b_1 + \cdots + b_{r+1}$ is even or odd.

The product is non-zero if and only if
 $z_1 = z_2 = \ldots = z_r$.  This is equivalent to either
$b_1=0$ and $b_2=\ldots=b_r=0$, in which case the product
is $(\Pi_x)^r \Sigma^c = \Pi_x \Sigma^c$, or
$b_1=1$ and $b_2=\ldots=b_r=0$, in which case the product
is $(\Pi_y)^r \Sigma^c = \Pi_y \Sigma^c$.

Furthermore, even if the product is non-zero, and hence
of the form $\Pi_x \Sigma^c$ or $\Pi_y \Sigma^c$, the
trace is zero  if $c=1$.  Otherwise, the trace is $1$.

Thus, the trace is zero unless $b_2=\ldots=b_r=0$
and $b_1 + \cdots + b_{r+1}$ is even, in which
case the trace is $1$.  This condition is equivalent
to $a_2, \ldots, a_r$ and $a_1 + \cdots + a_{r+1}$
all being even, and the result follows.
\end{proof}

Lemma~\ref{L:trace_of_product} suggests the following construction.
Consider the random (non-simple) graph $\mathcal Z$ with vertex set 
$\mathbb Z$, where consecutive integers are connected by an edge and vertex 
$k$ is
connected to itself by a self-loop if and only if
$\epsilon_k = 1$.  Thus, with an appropriate re-labeling of
vertices, the graph  $\mathcal Z$ is obtained from the infinite
connected component of $\mathcal G$ by replacing each ``tooth''
in that ``comb'' by a self-loop.
As usual, a {\em path} in the graph $\mathcal Z$ is a finite sequence of
vertices (that is, a sequence of integers) 
$(\path_0, \path_1, \ldots, \path_\ell)$ such that there is an 
an edge in $\mathcal Z$ between $\path_{i-1}$ and $\path_i$ for each 
$1 \le i \le \ell$.  We say that the such a path connects the vertices
$x \in \mathbb Z$ and $y \in \mathbb Z$ if 
$\path_0 = x$ and $\path_\ell = y$, and we say that $\ell$ is
the {\em length} of the path.
Define $\texttt H_\ell(x, y)$  
to be the the set of the paths  in $\mathcal Z$ of length $\ell$ 
that connect $x,y \in \mathbb Z$.  Then,  \eqref{eq:comb_m_moment} may
be re-written as
\begin{equation}\label{eq:comb_m_moment_new}
\int x^m \, H(dx)
=
\frac 12 \mathbb{E}
\left[
\sum_{ \path \in  \texttt H_m(0, 0)} \Tr(\Xi_{\path_0, \path_1 } \Xi_{\path_1, \path_2} \cdots \Xi_{\path_{m-1}, \path_m}) \right], \\
\end{equation}
where
\[
\Xi_{\path_{i-1}, \path_i}
:=
   \begin{cases}
      \Sigma, & \text{if $ \path_i  - \path_{i-1} = 0$},\\
     \Pi_x, & \text{if $ \path_i  - \path_{i-1} \in \{-1,+1\}$}
   \end{cases}
\]
and we have used the observation that the sequences
$\{\epsilon_k\}_{k \in \mathbb Z}$ and $\{\epsilon_{k+1}\}_{k \in \mathbb Z}$
have the same distribution.

Given a path $(\path_0, \path_1, \ldots, \path_\ell)$, we say that
the $i^{\mathrm{th}}$ step of the path is a {\em stutter} (resp. a {\em move})
if $ \path_i  - \path_{i-1} = 0$ 
(resp. $ \path_i  - \path_{i-1} \in \{-1,+1\}$).  We can use the successive
moves to split the stutters of a path into successive
disjoint (possibly empty) runs:
if the path $\path$ has length $\ell$ and  
$\tts(\path)$ stutters, 
then there will be $\ell - \tts(\path)$ moves and 
$\ell - \tts(\path)+1$ runs of stutters.
For $1 \le k \le \ell - \tts(\path)+1$ write
$\tts_k(\path)$ for the number of stutters in the $k^{\mathrm{th}}$
such run, so that  $\tts(\path) = \sum_{k=1}^{\ell - \tts(\path)+1}\tts_k(\path)$. 

For example, if $\path  = (0, 1, 2, 2, 2, 1, 0, 0) \in \texttt H_7(0,0)$, 
then $\tts(\path) =3$ and $\tts_1(\path) = 0$, $\tts_2(\path)= 0$, $\tts_3(\path) = 2$, $\tts_4(\path) = 0$, $\tts_5(\path)=1$.   

For $\ell \ge 1$ and $0 \le z < \ell$, set
 \[ 
\begin{split}
\texttt C(\ell, z)
& := 
 \big \{ \path \in \texttt H_\ell(0,0) : \tts(\path) = z, \,
 \tts_1(\path) + \tts_{\ell - z+1}(\path) \in 2 \mathbb Z, \\
& \quad \tts_k(\path) \in 2 \mathbb Z,  \, 1 < k < \ell -z +1\big \} \\
\end{split}
 \]
and put
\[ 
\texttt C(\ell, \ell) 
:= 
\left \{ \path \in \texttt H_\ell(0,0) : \tts(\path) = \ell, \, \tts(\path) \in 2 \mathbb Z \right \}. 
\]

Because of Lemma \ref{L:trace_of_product}, we may rewrite 
\eqref{eq:comb_m_moment_new} in terms of the expected sizes of the sets 
$\texttt C(m, z)$ as follows
\[ 
\int x^m \, H(dx) 
=  
\frac12 \left [  2 \E [ \hash \texttt C(m, m) ] + \sum_{z=0}^{m-1}  \E [ \hash \texttt C(m, z)] \right ].\]
Observe that 
$\E [ \hash \texttt C(m, m) ]  = \prob \{ \epsilon_0 = 1\} = \frac{1}{2}$
when $m$ is even. 
For a positive integer $m$ and $0 \le z < m$
we can write the set $\texttt C(m,z)$ 
as the disjoint union of the sets $\texttt B(m, z, j)$, 
$0 \le j \le \frac{z}{2}$, where
\[  
\begin{split}
\texttt B(m, z, j) :=& \big \{ \path \in \texttt H_m(0,0) : 
\tts(\path) = m, \,  \tts_1(\path) + \tts_{m - z+1}(\path) = 2j, \\
& \quad  \tts_k(\path) \in 2 \mathbb Z, \, 1 < k < m -z +1\big \}.
\end{split}\]

We can express the cardinalities of the sets $B(m, z, j)$ as follows.

\textbf{Case I.} \; $j=0$. We have
\begin{align*}
 \# \texttt B(m, z, 0) &=   \sum_{x, y = \pm 1} \#  \big \{ \path \in \texttt H_{m -2}(x,y) : \tts(\path) = z,  \, \tts_k(\path) \in 2 \mathbb Z, \, 1 \le k \le m -z  - 1\big \} \\
& = \sum_{x, y = \pm 1}  \#  \left \{ \path \in \texttt H_{m - z/2 -2}(x,y) : \tts(\path) = \frac{z}{2}\right \}.
\end{align*}
The first equality follows by considering the first and 
and the last steps of a path in $\texttt H_m(0,0)$. 
The second inequality is then obtained by 
realizing that each path $\path$ in the set
\[
\big \{ \path \in \texttt H_{m -2}(x,y) : \tts(\path) = z,  \, \tts_k(\path) \in 2 \mathbb Z, \, 1 \le k \le m -z  - 1\big \}
\]
may be tranformed to a path in the set
\[
\left \{ \path \in \texttt H_{m - z/2 -2}(x,y) : \tts(\path) = \frac{z}{2}\right \}
\]
by removing for $1 \le k \le m - z - 1$
every second stutter in the $k^{\mathrm{th}}$ run of
stutters of size $\tts_k(\path) \in 2 \mathbb Z$, and that
this transformation is  a bijection.

Because the distribution of the sequence $\{\epsilon_k\}_{k \in \mathbb Z}$ 
is invariant under the group of transformations of
the index set $\mathbb Z$ generated by the transformations $k \mapsto k \pm 1$
and $k \mapsto -k$, the distribution of the random graph
$\mathcal Z$ inherits similar invariance  and we have
\begin{equation} \label{eq:set_decomp1}
\begin{split}  \E  [\# \texttt B(m, z, 0)] = & 2 \E \left [ \# \left \{ \path \in \texttt H_{m - z/2  - 2}(0,0) : \tts(\path) = \frac{z}{2} \right \} \right ] \\ &+  2 \E \left [ \# \left \{ \path \in \texttt H_{m - z/2 - 2}(0, 2) : \tts(\path) = \frac{z}{2}  \right \}  \right ].
\end{split}
 \end{equation}

 \textbf{Case II.} \;  $0 < j \le \frac{z}{2}$. By reasoning similar
 to the above, we deduce that
\begin{align*}
 \# \texttt B(m, z, j) & =   \sum_{x, y = \pm 1} \#  \big \{ \path \in \texttt H_{m - 2j-2}(x,y) : \tts(\path) = z-2j, \\
 & \qquad  \tts_k(\path) \in 2 \mathbb Z, 1 \le k \le m -z  - 1\big \} \ind_{ \epsilon_0 = 1}\\
& = \sum_{x, y = \pm 1}  \#  \left\{ \path \in \texttt H_{m - 2j-2 - (z - 2j)/2}(x,y) : \tts(\path) = \frac{(z-2j)}{2} \right \} \ind_{ \epsilon_0 = 1}. \\
\end{align*}
The first equality is obtained by considering both the first and the last steps
of a path in $\texttt H_m(0,0)$ that are moves (that is, are not stutters).
Taking expectation and again using the symmetry properties of the distribution of
$\mathcal Z$, we have
\begin{equation} \label{eq:set_decomp2}
\begin{split}  
\E [  \# \texttt B(m, z, j) ]  
& =  \E \left [ \# \left \{ \path \in \texttt H_{m - z/2 - j - 2}(0,0) : \tts(\path) = \frac{z}{2} - j \right \} \, | \, \epsilon_1 = 1 \right ] \\ 
& \quad +  \E \left [ \# \left \{ \path \in \texttt H_{m - \frac{z}{2} - j - 2}(0,2) : \tts(\path) = \frac{z}{2} - j \right \} | \epsilon_1 = 1 \right ].
\end{split}
 \end{equation}
Combining these identities, we obtain for $m$ even that
\begin{equation}
\label{final_moment_exprn}
\begin{split}
& \int x^m \, H(dx) \\
& \quad =  \frac12 \biggl(  1 + 2 \sum_{z=0}^{m-1}   \E \left [ \# \left \{ \path \in \texttt H_{m - z/2  - 2}(0,0) : \tts(\path) = \frac{z}{2} \right \} \right ]  \\
 & \quad +  2 \sum_{z=0}^{m-1} \E \left [ \# \left \{ \path \in \texttt H_{m - z/2 - 2}(0, 2) : \tts(\path) = \frac{z}{2}  \right \}  \right ]  \\
& \qquad + \sum_{z=0}^{m-1} \sum_{ 1 \le j \le z/2} \E \left [ \# \left \{ \path \in \texttt H_{m - z/2 - j - 2}(0,0) : \tts(\path) = \frac{z}{2} - j \right \} \, \Big | \, \epsilon_1 = 1 \right ] \\
 & \qquad +  \sum_{z=0}^{m-1} \sum_{ 1 \le j \le z/2} \E \left [ \# \left \{ \path \in \texttt H_{m - z/2 - j - 2}(0,2) : \tts(\path) = \frac{z}{2} - j \right \} \, \Big | \, \epsilon_1 = 1 \right ]  \biggr).
 \end{split}
  \end{equation}
  
It is clear \eqref{final_moment_exprn}
from that moments of the limit of the empirical spectral distribution 
of the adjacency matrix of the random comb model can be expressed 
in terms of quantities involving the random walk on the random graph 
$\mathcal Z$. However, since computing these quantities appears to
involve taking an expectation with respect to the random graph $\mathcal Z$
(or at least with respect to a neighborhood of $0$ that grows
with the order $m$ of the moment), it is not clear how much of
an advance this observation is over the formula
\eqref{eq:comb_m_moment}.
We show in the next subsection that this computation for a
`random walk in random environment' can be reformulated as a 
computation for a random walk on a deterministic graph, thereby
removing the explicit expectation over the random environment.

 \subsection{Random walk on $\mathbb Z$ with random self-loops and the lamplighter group}
The lamplighter group is the wreath product of the group
$\mathbb Z/ 2\mathbb Z $ of integers modulo $2$
and the group of integers $\mathbb Z$; that is,
it is the semi-direct product of the infinite product group
$(\mathbb Z / 2 \mathbb Z)^{\mathbb Z}$ and $\mathbb Z$, where
$\mathbb Z$ is thought of as a group of automorphisms on
$(\mathbb Z / 2 \mathbb Z)^{\mathbb Z}$ by associating $k \in \mathbb Z$
with the automorphism of $(\mathbb Z / 2 \mathbb Z)^{\mathbb Z}$ that
sends the sequence $(c_i)_{i \in \mathbb Z}$ to the sequence
$(c_{i-k})_{i \in \mathbb Z}$.  

More concretely, the lamplighter
group may be taken to be $\{0, 1\}^{\mathbb Z} \times  \mathbb Z$ as a set.
Elements can be thought of 
as consisting of two components:
a doubly infinite sequence of street lamps 
that are each either ``off'' ($0$) or ``on'' ($1$) 
and an integer giving the location of the lamplighter.
The group operation is defined by 
\[
((c_i)_{i \in \mathbb Z}, k) \cdot ((d_j)_{j \in \mathbb Z}, \ell)
=
((c_h + d_{h - k})_{h \in \mathbb Z}, k + \ell),
\]
where the additions in $\{0,1\}$ are, of course, performed modulo $2$.
The identity element is ${\bf id}  := ( \mathbf{0}, 0)$,
where $\mathbf{0} \in  \{0, 1\}^{\mathbb Z} $ is the sequence consisting
of all zeros. 

Write $\mathbf{e}_n \in \{0, 1\}^{\mathbb Z}$ for the
sequence that has $1$ in the $n^{\mathrm{th}}$ coordinate and $0$
elsewhere.  Set $a := (\mathbf{e}_0, 0)$ and
$t := (\mathbf{0}, 1)$. Observe that
\[
((c_i)_{i \in \mathbb Z}, k) \cdot a = ((c_i')_{i \in \mathbb Z}, k),
\]
where $c_i' = c_i$ except for $i= k$, for which $c_k' = c_k + 1 \mod 2$,
and
\[
((c_i)_{i \in \mathbb Z}, k) \cdot t = ((c_i)_{i \in \mathbb Z}, k + 1).
\]
Thus, right multiplication by the element $a$ corresponds to the lamplighter
flipping the state of the lamp at his current location and staying where he is,
while right multiplication by the element $t$ corresponds to the
lamplighter leaving the configuration of lamps unchanged and taking
one step to the right.

The elements $a$ and $t$ generate the lamplighter group and
we consider the Cayley graph corresponding to the
symmetric set of generators $\{t, t^{-1}, a\}$. 
This is an infinite $3$-regular graph.

Given $\gamma \in \mathbb R$,
define 
$A_\gamma : (\{0, 1\}^{\mathbb Z} \times  \mathbb Z)^2 \to \mathbb R$ 
by 
\[
A_\gamma((\mathbf u, x), (\mathbf v, y)) =
\begin{cases}
1, & \text{if $(\mathbf u, x)^{-1} (\mathbf v, y) \in \{t, t^{-1}\}$}, \\
\gamma, & \text{if $(\mathbf u, x)^{-1} (\mathbf v, y) = a$}, \\
0, & \text{otherwise}.
\end{cases}
\]
That is, $A_\gamma$ is the adjacency matrix of the Cayley graph 
of the lamplighter group modified so that edges corresponding to the
generator $a = a^{-1}$ have ``weight'' $\gamma$ rather than $1$.

\begin{Lemma} \label{lem:lamplighter1}
For any $ x \in \mathbb Z$ and non-negative integers $0 \le k \le m$
\[
\E \Big [\# \big \{ \path \in \text{\em\texttt H}_m(0, x): \tts(\path)  = k  \big \}\Big ]
\]
is the coefficient of 
 $\alpha^k$ in
\[
\sum_{\ell = 0}^m \binom{m}{\ell} (\alpha/2)^{m-\ell} A_{\alpha/2}^\ell \big( { \bf id} ,({ \bf 0}, x) \big)
\]
and
\[ 
\E \Big [\# \big \{ \path \in \text{\em\texttt H}_m(0, x): \tts(\path)  = k  \big \} \, \big | \, \epsilon_1 =1 \Big ]
\]
is the coefficient of
$\alpha^k$ 
\[
\sum_{\ell = 0}^m \binom{m}{\ell} (\alpha/2)^{m-\ell} \left [ A_{\alpha/2}^\ell \big( { \bf id} ,({ \bf 0}, x) \big) + A_{\alpha/2}^\ell \big( { \bf id} ,(\mathbf{e}_1, x) \big) \right ].
\]
\end{Lemma}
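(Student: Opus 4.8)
The plan is to rewrite both right-hand sides as weighted walk counts on a mild enlargement of the lamplighter Cayley graph, and then match them term-by-term with the path counts in $\mathcal Z$ via the decomposition of a walk into its moves and its stutters.

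First I would note that $A_{\alpha/2}$ commutes with $\frac{\alpha}{2}I$, so the binomial theorem gives
\[
\sum_{\ell=0}^m \binom{m}{\ell}(\alpha/2)^{m-\ell}A_{\alpha/2}^\ell = \Bigl(A_{\alpha/2}+\tfrac{\alpha}{2}I\Bigr)^m .
\]
The matrix $A_{\alpha/2}+\tfrac{\alpha}{2}I$ is the weighted adjacency matrix of the Cayley graph of the lamplighter group for $\{t,t^{-1},a\}$ augmented by a self-loop of weight $\alpha/2$ at every vertex, with the $a$-edges also of weight $\alpha/2$ and the $t^{\pm1}$-edges of weight $1$. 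Expanding the $m$-th power as a sum over length-$m$ walks from $\mathbf{id}$ to $(\mathbf 0,x)$ (resp.\ to $(\mathbf e_1,x)$), a walk with exactly $k$ \emph{special} steps --- a step along an $a$-edge or along an added self-loop --- contributes $(\alpha/2)^k$, so the coefficient of $\alpha^k$ equals $2^{-k}$ times the number of such walks having exactly $k$ special steps.

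Next I would decompose such a walk. Its $m-k$ non-special steps are the $t^{\pm1}$-steps, and since a special step leaves the lamplighter position unchanged, reading off the positions produces a skeleton walk $\bar\path=(\bar\path_0=0,\dots,\bar\path_{m-k})$ on $\mathbb Z$ with $\bar\path_{m-k}=x$; the $k$ special steps are spread among the $m-k+1$ gaps between consecutive moves, and a special step in gap $i$ acts at position $\bar\path_i$. Thus, with $s_i$ the number of special steps in gap $i$ (so $\sum_i s_i=k$) and $n_v:=\sum_{i:\bar\path_i=v}s_i$ the number acting at vertex $v$ (so $\sum_v n_v=k$), each special step is independently a lamp-flip or a do-nothing, the walk ends at $(\mathbf 0,x)$ iff the number of flips among the $n_v$ steps at each vertex $v$ is even, and it ends at $(\mathbf 0,x)$ or $(\mathbf e_1,x)$ iff this parity condition holds for every $v\neq1$ with no condition at $v=1$. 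Using that a nonempty $n$-element set has $2^{n-1}$ even subsets and that $\sum_v n_v=k$, the number of admissible flip/do-nothing assignments, for fixed $\bar\path$ and $(s_i)$, is $\prod_{v:n_v\ge1}2^{n_v-1}=2^{k-c}$ with $c=\#\{v:n_v\ge1\}$ in the first case, and $2^{n_1}\prod_{v\neq1,\,n_v\ge1}2^{n_v-1}=2^{k-c'}$ with $c'=\#\{v\neq1:n_v\ge1\}$ in the second. Summing over skeleton walks $\bar\path$ of length $m-k$ from $0$ to $x$ and compositions $(s_0,\dots,s_{m-k})$ of $k$, and multiplying by $2^{-k}$, the coefficient of $\alpha^k$ equals $\sum_{\bar\path,(s_i)}2^{-c}$ in the first case and $\sum_{\bar\path,(s_i)}2^{-c'}$ in the second.

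Finally I would match these with the path counts. A path $\path\in\texttt{H}_m(0,x)$ with $\tts(\path)=k$ is, by deleting its stutter-steps and recording how many stutters fall in each gap, exactly the same data: a skeleton walk $\bar\path$ of length $m-k$ from $0$ to $x$ together with a composition $(s_0,\dots,s_{m-k})$ of $k$, the $s_i$ stutters in gap $i$ occurring at vertex $\bar\path_i$; such a path lies in $\mathcal Z$ iff the self-loop at each vertex it stutters at is present, i.e.\ iff $\epsilon_v=1$ for every $v$ with $n_v\ge1$. Hence $\E[\#\{\path\in\texttt{H}_m(0,x):\tts(\path)=k\}]=\sum_{\bar\path,(s_i)}\prob\{\epsilon_v=1\ \forall v\text{ with }n_v\ge1\}=\sum_{\bar\path,(s_i)}2^{-c}$, and conditioning on $\epsilon_1=1$ simply drops the constraint at vertex $1$, giving $\E[\,\cdot\mid\epsilon_1=1]=\sum_{\bar\path,(s_i)}2^{-c'}$. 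Comparison with the previous paragraph completes the proof. The step I expect to be the main obstacle is the bookkeeping in this walk-decomposition: making the correspondence between augmented-Cayley-graph walks and (skeleton, composition, flip-assignment) triples genuinely bijective, ensuring the vertices with $n_v=0$ are not miscounted in $c$ and $c'$, and checking that the alternate endpoint $(\mathbf e_1,x)$ relaxes precisely the parity constraint that conditioning on $\epsilon_1=1$ removes; once this dictionary is in place the lemma is just the observation that both sides equal $\sum_{\bar\path,(s_i)}2^{-c}$ (resp.\ $\sum_{\bar\path,(s_i)}2^{-c'}$).
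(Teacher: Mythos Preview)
Your argument is correct. Both claims follow once you collapse the binomial sum to $(A_{\alpha/2}+\tfrac{\alpha}{2}I)^m$ and carry out the skeleton/composition/flip-assignment decomposition as you describe; the even-subset count $2^{k-c}$ and its relaxed version $2^{k-c'}$ match the expectations over the Bernoulli environment exactly as you say.

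The paper reaches the same conclusion by a somewhat different, more algebraic route. It introduces the random tridiagonal matrix $T_\alpha$ with $T_\alpha(i,i)=\alpha\epsilon_i$, so that the desired expected path count is the coefficient of $\alpha^k$ in $\E[T_\alpha^m(0,x)]$; it then \emph{centers} the diagonal by writing $T_\alpha=\overline T_\alpha+\tfrac{\alpha}{2}I$, where $\overline T_\alpha$ has i.i.d.\ symmetric $\pm\tfrac{\alpha}{2}$ diagonal entries, and applies the binomial expansion in that direction. The point of the centering is that in the path expansion of $\E[\overline T_\alpha^\ell(0,x)]$ the mean-zero signs kill every path that traverses some self-loop an odd number of times, and the surviving paths are exactly those that correspond to Cayley-graph walks from $\mathbf{id}$ to $(\mathbf 0,x)$; this identifies $\E[\overline T_\alpha^\ell(0,x)]$ with $A_{\alpha/2}^\ell(\mathbf{id},(\mathbf 0,x))$ without any explicit subset counting. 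Your approach trades this symmetrization trick for a direct combinatorial count of even-parity flip assignments in the augmented Cayley graph. The paper's version is slicker and extends mechanically to other symmetric environments, while yours is more transparent about why the factor $2^{-c}$ appears and makes the conditional version (dropping the constraint at vertex $1$) completely obvious.
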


\begin{proof}
We will only establish the first claim.  The proof of the second
claim is similar and we leave it to the reader.

For $\alpha \in \mathbb R$ define a 
random doubly infinite tridiagonal matrix $T_\alpha$
with rows and columns indexed by $\mathbb Z$ by setting
$T_\alpha(i, i+1) = T_\alpha(i-1,i) = 1$ and 
$T_\alpha(i,i) =  \alpha \epsilon_i$ for all $i \in \mathbb {Z}$. 
Note that $T_\alpha$ is obtained from the adjacency matrix
of the random graph $\mathcal Z$ by replacing each entry
that corresponds to a self-loop (that is, a one on the diagonal)
by $\alpha$. It is clear that
\[
\E \Big [\# \big \{ \path \in \text{\em\texttt H}_m(0, x): \tts(\path)  = k  \big \}\Big ]
\]
is the coefficient of  $\alpha^k$ in   $ \E [  T_\alpha^m(0,x)]$.

Set  $\overline
 T_\alpha = T_\alpha - \frac{\alpha}{2} I$, where $I$ is the identity matrix.
Then,
\[ 
\E [T_\alpha^m(0,x)] 
= 
\sum_{\ell=0}^m \binom{m}{\ell} (\alpha/2)^{m - \ell} \E [ \overline T^\ell_\alpha(0,x)]. 
\]
We claim that
\[
 \E [\overline T^\ell_\alpha(0,x)]
 =
 A_{\alpha/2}^\ell \big( { \bf id} ,({ \bf 0}, x) \big).
\] 
To see this, note first that if we define a graph $\overline{\mathcal Z}$
with vertex set $\mathbb Z$ by placing edges between successive
integers and self-loops at each integer, then 
 $ \overline T_\alpha $ is the adjacency matrix of 
$\overline{\mathcal Z}$ except that the diagonal entries
(which are all ones)
have been replaced by i.i.d.\ symmetric random variables
with values in the set $\{\pm \frac{\alpha}{2}\}$. Therefore,
we can expand $\overline T^\ell_\alpha(0,x)$
out as a sum of products of entries of  $\overline T_\alpha(0,x)$,
with each product in the sum corresponding to 
a path of length $\ell$ from $0$ to $x$ in the graph $\overline{\mathcal Z}$.
In order that the expectation of the product corresponding to
such a path $\path$ is non-zero, it is necessary that any self-loop
in $\overline{\mathcal Z}$ is traversed an even number of times,
in which case the expectation is $(\frac{\alpha}{2})^{\tts(\path)}$,
where, as above, $\tts(\path)$ is the number of stutters in
the path. Such paths can be associated bijectively with
paths in the Cayley graph of the lamplighter group where
the lamplighter goes from position $0$ to position $x$ in $\ell$ steps in
such a way that if every lamp is ``off'' at the beginning of his journey,
then  every lamp is also ``off'' at the end of his journey and a total of
$\tts(\path)$ flips of lamp states have been made.  We can expand
$A_{\alpha/2}^\ell ( { \bf id} ,({ \bf 0}, x) )$ as a sum of products
with each product corresponding to one of the latter collection
of paths in the Cayley graph, and the value of such a product
for the path in the Cayley graph associated with the path $\path$
in $\overline{\mathcal Z}$ is $(\frac{\alpha}{2})^{\tts(\path)}$.
\end{proof}

In view of \eqref{final_moment_exprn} and Lemma  \ref{lem:lamplighter1},
the moments of the probability measure $H$ can be 
recovered from powers of the matrices $A_\gamma$.  Computing the
$\ell^{\mathrm{th}}$ power of the matrix $A_\gamma$ is equivalent
to computing the $\ell$-fold convolution of the measure $\mu$
on the lamplighter group that assigns mass $\gamma$ to the
generator $a$ and mass $1$ to each of the generators $t$ and $t^{-1}$.
One might therefore hope to use the representation theory 
of the lamplighter group (see, for example,
\cite{woess05, woess08, scarabotti08}) 
to compute the relevant quantities. 
For example, Corollary 3.5 of \cite{scarabotti08} gives a formula for the 
entries of $A_\gamma^\ell$ when $\gamma=1$.  The treatment
in \cite{scarabotti08} is for the finite
lamplighter group in which the lamps are indexed by the group
$\mathbb Z/ n \mathbb Z$ instead of $\mathbb Z$, but when  $n$ is large enough,
say $n> 2\ell$,  the random walker cannot feel this finiteness within $\ell$
steps and so the formula still applies. Unfortunately,
the formula involves a sum over all subsets of  $\mathbb{Z}/ n \mathbb Z$
and we are effectively led back to performing a computation for each possible
realization of the random graph $\mathcal Z$ and taking the
expectation over all such realizations!

\bigskip\noindent
{\bf Acknowledgments.}
We thank Charles Bordenave for helpful discussions. S.B.  
thanks David Aldous for his constant encouragement and illuminating discussions regarding local weak convergence methodology. S.N.E. conducted part of
this research while he was a Visiting Fellow at
the Mathematical Sciences Institute of the Australian National University.

\def\cprime{$'$}
\providecommand{\bysame}{\leavevmode\hbox to3em{\hrulefill}\thinspace}
\providecommand{\MR}{\relax\ifhmode\unskip\space\fi MR }
\providecommand{\MRhref}[2]{%
  \href{http://www.ams.org/mathscinet-getitem?mr=#1}{#2}
}
\providecommand{\href}[2]{#2}


\begin{thebibliography}{CDGT88}

\bibitem[Ald91a]{aldous-fringe}
David Aldous, \emph{Asymptotic fringe distributions for general families of
  random trees}, Ann. Appl. Probab. \textbf{1} (1991), no.~2, 228--266.
  \MR{MR1102319 (92j:60009)}

\bibitem[Ald91b]{Ald91a}
David Aldous, \emph{The continuum random tree {I}}, Ann. Probab. \textbf{19}
  (1991), 1--28.

\bibitem[AS04]{aldous-obj}
David Aldous and J.~Michael Steele, \emph{The objective method: probabilistic
  combinatorial optimization and local weak convergence}, Probability on
  discrete structures, Encyclopaedia Math. Sci., vol. 110, Springer, Berlin,
  2004, pp.~1--72. \MR{MR2023650 (2005e:60018)}

\bibitem[Bai93]{bai}
Z.~D. Bai, \emph{Convergence rate of expected spectral distributions of large
  random matrices. {I}. {W}igner matrices}, Ann. Probab. \textbf{21} (1993),
  no.~2, 625--648. \MR{1217559 (95a:60039)}

\bibitem[BG00]{bauer-kernel}
M.~Bauer and O.~Golinelli, \emph{On the kernel of tree incidence matrices}, J.
  Integer Seq. \textbf{3} (2000), no.~1, Article 00.1.4, 1 HTML document
  (electronic). \MR{MR1750745 (2001b:05138)}

\bibitem[BG01]{bauer-goli-2001}
\bysame, \emph{Random incidence matrices: Moments of the spectral density},
  Journal of Statistical Physics \textbf{103} (2001), no.~1, 301--337.

\bibitem[Bha07]{shanky-pref}
Shankar Bhamidi, \emph{Universal techniques for preferential attachment: Local
  and global analysis}, 2007, Preprint available at {\tt
  http://www.stat.berkeley.edu/users/shanky/preferent.pdf}.

\bibitem[BI01]{BleherIts01}
Pavel Bleher and Alexander Its (eds.), \emph{Random matrix models and their
  applications}, Cambridge University Press, 2001.

\bibitem[Big93]{MR1271140}
Norman Biggs, \emph{Algebraic graph theory}, second ed., Cambridge Mathematical
  Library, Cambridge University Press, Cambridge, 1993. \MR{MR1271140
  (95h:05105)}

\bibitem[BL10a]{bordenave}
C.~Bordenave and M.~Lelarge, \emph{Resolvent of large random graphs}, Random
  Structures \& Algorithms \textbf{37} (2010), no.~3, 332--352.

\bibitem[BL10b]{bordenave09}
Charles Bordenave and Marc Lelarge, \emph{The rank of diluted random graphs},
  Proceedings of the Twenty-First Annual ACM-SIAM Symposium on Discrete
  Algorithms (Philadelphia, PA, USA), SODA '10, Society for Industrial and
  Applied Mathematics, 2010, pp.~1389--1402.

\bibitem[BM76]{bondy-murty}
J.~A. Bondy and U.~S.~R. Murty, \emph{Graph theory with applications}, American
  Elsevier Publishing Co., Inc., New York, 1976. \MR{MR0411988 (54 \#117)}

\bibitem[BM93]{MR1231010}
Phillip Botti and Russell Merris, \emph{Almost all trees share a complete set
  of immanantal polynomials}, J. Graph Theory \textbf{17} (1993), no.~4,
  467--476. \MR{MR1231010 (94g:05053)}

\bibitem[BN04]{MR2106034}
B{\'e}la Bollob{\'a}s and Vladimir Nikiforov, \emph{Graphs and {H}ermitian
  matrices: eigenvalue interlacing}, Discrete Math. \textbf{289} (2004),
  no.~1-3, 119--127. \MR{MR2106034 (2005j:05058)}

\bibitem[BR03]{boll-rior-survey}
B{\'e}la Bollob{\'a}s and Oliver~M. Riordan, \emph{Mathematical results on
  scale-free random graphs}, Handbook of graphs and networks, Wiley-VCH,
  Weinheim, 2003, pp.~1--34. \MR{MR2016117 (2004j:05108)}

\bibitem[BW05]{woess05}
Laurent Bartholdi and Wolfgang Woess, \emph{Spectral computations on
  lamplighter groups and {D}iestel-{L}eader graphs}, J. Fourier Anal. Appl.
  \textbf{11} (2005), no.~2, 175--202. \MR{2131635 (2006e:20052)}

\bibitem[CDGT88]{MR926481}
Drago{\v{s}}~M. Cvetkovi{\'c}, Michael Doob, Ivan Gutman, and Aleksandar
  Torga{\v{s}}ev, \emph{Recent results in the theory of graph spectra}, Annals
  of Discrete Mathematics, vol.~36, North-Holland Publishing Co., Amsterdam,
  1988. \MR{MR926481 (89d:05130)}

\bibitem[CDS95]{MR1324340}
Drago{\v{s}}~M. Cvetkovi{\'c}, Michael Doob, and Horst Sachs, \emph{Spectra of
  graphs}, third ed., Johann Ambrosius Barth, Heidelberg, 1995, Theory and
  applications. \MR{MR1324340 (96b:05108)}

\bibitem[Chu97]{fan}
Fan R.~K. Chung, \emph{Spectral graph theory}, CBMS Regional Conference Series
  in Mathematics, vol.~92, Published for the Conference Board of the
  Mathematical Sciences, Washington, DC, 1997. \MR{MR1421568 (97k:58183)}

\bibitem[CLV03a]{vanvu-2}
F.~Chung, L.~Lu, and V.~Vu, \emph{Eigenvalues of random power law graphs},
  Annals of Combinatorics \textbf{7} (2003), no.~1, 21--33.

\bibitem[CLV03b]{vanvu-1}
\bysame, \emph{Spectra of random graphs with given expected degrees},
  Proceedings of the National Academy of Sciences \textbf{100} (2003), no.~11,
  6313--6318.

\bibitem[CRS97]{MR1440854}
D.~Cvetkovi{\'c}, P.~Rowlinson, and S.~Simi{\'c}, \emph{Eigenspaces of graphs},
  Encyclopedia of Mathematics and its Applications, vol.~66, Cambridge
  University Press, Cambridge, 1997. \MR{MR1440854 (98f:05111)}

\bibitem[Dei00]{Deift00}
Percy Deift, \emph{Orthogonal polynomials and random matrices: A
  {R}iemann-{H}ilbert approach}, Courant Lecture Notes, American Mathematical
  Society, 2000.

\bibitem[Dur07]{durrett}
Rick Durrett, \emph{Random graph dynamics}, Cambridge Series in Statistical and
  Probabilistic Mathematics, Cambridge University Press, Cambridge, 2007.
  \MR{MR2271734}

\bibitem[FFF03]{MR2080798}
Abraham Flaxman, Alan Frieze, and Trevor Fenner, \emph{High degree vertices and
  eigenvalues in the preferential attachment graph}, Approximation,
  randomization, and combinatorial optimization, Lecture Notes in Comput. Sci.,
  vol. 2764, Springer, Berlin, 2003, pp.~264--274. \MR{MR2080798 (2005e:05124)}

\bibitem[FFF05]{MR2166274}
\bysame, \emph{High degree vertices and eigenvalues in the preferential
  attachment graph}, Internet Math. \textbf{2} (2005), no.~1, 1--19.
  \MR{MR2166274 (2006e:05157)}

\bibitem[Fio99]{MR1673001}
M.~A. Fiol, \emph{Eigenvalue interlacing and weight parameters of graphs},
  Linear Algebra Appl. \textbf{290} (1999), no.~1-3, 275--301. \MR{MR1673001
  (2000c:05100)}

\bibitem[For09]{Forrester09}
Peter Forrester, \emph{Log-gases and random matrices}, 2009, Preprint of book
  available at {\tt http://www.ms.unimelb.edu.au/~matpjf/matpjf.html}.

\bibitem[GR01]{MR1829620}
Chris Godsil and Gordon Royle, \emph{Algebraic graph theory}, Graduate Texts in
  Mathematics, vol. 207, Springer-Verlag, New York, 2001. \MR{MR1829620
  (2002f:05002)}

\bibitem[Gri81]{grimmett}
G.~R. Grimmett, \emph{Random labelled trees and their branching networks}, J.
  Austral. Math. Soc. Ser. A \textbf{30} (1980/81), no.~2, 229--237.
  \MR{MR607933 (82g:05042)}

\bibitem[Gui09]{Guionnet09}
Alice Guionnet, \emph{Large random matrices: Lectures on macroscopic
  asymptotics: {\'E}cole d'{\'e}t\'e de {P}robabilit\'es de {S}aint-{F}lour
  {XXXVI} 2006}, Lecture Notes in Mathematics, Springer, 2009.

\bibitem[Hae95]{MR1344588}
Willem~H. Haemers, \emph{Interlacing eigenvalues and graphs}, Linear Algebra
  Appl. \textbf{226/228} (1995), 593--616. \MR{MR1344588 (96e:05110)}

\bibitem[HJ90]{MR1084815}
Roger~A. Horn and Charles~R. Johnson, \emph{Matrix analysis}, Cambridge
  University Press, Cambridge, 1990, Corrected reprint of the 1985 original.
  \MR{MR1084815 (91i:15001)}

\bibitem[Jag89]{MR1014449}
Peter Jagers, \emph{General branching processes as {M}arkov fields}, Stochastic
  Process. Appl. \textbf{32} (1989), no.~2, 183--212. \MR{MR1014449
  (91d:60208)}

\bibitem[JN84]{jagers-nerman}
Peter Jagers and Olle Nerman, \emph{The growth and composition of branching
  populations}, Adv. in Appl. Probab. \textbf{16} (1984), no.~2, 221--259.
  \MR{MR742953 (86j:60193)}

\bibitem[KS81]{karp-sipser}
R.~Karp and M.~Sipser, \emph{Maximum matchings in sparse random graphs}, 22nd
  Annual Symposium on Foundations of Computer Science (1981), 364--375.

\bibitem[LNW08]{woess08}
Franz Lehner, Markus Neuhauser, and Wolfgang Woess, \emph{On the spectrum of
  lamplighter groups and percolation clusters}, Math. Ann. \textbf{342} (2008),
  no.~1, 69--89. \MR{2415315 (2009d:60329)}

\bibitem[ME06]{q-bio.PE/0512010}
Frederick~A. Matsen and Steven~N. Evans, \emph{Ubiquity of synonymity: almost
  all large binary trees are not uniquely identified by their spectra or their
  immanantal polynomials}, U.C. Berkeley Department of Statistics Technical
  Report No. 698, 2006.

\bibitem[Meh04]{Mehta04}
Madan~Lal Mehta, \emph{Random matrices}, {T}hird ed., Pure and Applied
  Mathematics, vol. 142, Academic Press, 2004.

\bibitem[M{\'o}r05]{mori-max}
Tam{\'a}s~F. M{\'o}ri, \emph{The maximum degree of the {B}arab\'asi-{A}lbert
  random tree}, Combin. Probab. Comput. \textbf{14} (2005), no.~3, 339--348.
  \MR{MR2138118 (2006a:60014)}

\bibitem[NJ84]{nerman-jagers}
Olle Nerman and Peter Jagers, \emph{The stable double infinite pedigree process
  of supercritical branching populations}, Z. Wahrsch. Verw. Gebiete
  \textbf{65} (1984), no.~3, 445--460. \MR{MR731231 (85e:60091)}

\bibitem[Nor98]{Norris98}
J.~R. Norris, \emph{Markov chains}, Cambridge Series in Statistical and
  Probabilistic Mathematics, vol.~2, Cambridge University Press, Cambridge,
  1998, Reprint of 1997 original. \MR{1600720 (99c:60144)}

\bibitem[Roj06a]{MR2209241}
Oscar Rojo, \emph{On the spectra of certain rooted trees}, Linear Algebra Appl.
  \textbf{414} (2006), no.~1, 218--243. \MR{MR2209241 (2006m:05156)}

\bibitem[Roj06b]{MR2209240}
\bysame, \emph{The spectra of some trees and bounds for the largest eigenvalue
  of any tree}, Linear Algebra Appl. \textbf{414} (2006), no.~1, 199--217.
  \MR{MR2209240 (2007j:05145)}

\bibitem[Roj07]{MR2278225}
\bysame, \emph{The spectra of a graph obtained from copies of a generalized
  {B}ethe tree}, Linear Algebra Appl. \textbf{420} (2007), no.~2-3, 490--507.
  \MR{MR2278225 (2007g:05116)}

\bibitem[Roj08]{MR2416602}
\bysame, \emph{Spectra of weighted generalized {B}ethe trees joined at the
  root}, Linear Algebra Appl. \textbf{428} (2008), no.~11-12, 2961--2979.
  \MR{MR2416602}

\bibitem[RR07a]{MR2353161}
Oscar Rojo and Mar{\'{\i}}a Robbiano, \emph{An explicit formula for eigenvalues
  of {B}ethe trees and upper bounds on the largest eigenvalue of any tree},
  Linear Algebra Appl. \textbf{427} (2007), no.~1, 138--150. \MR{MR2353161
  (2008g:05131)}

\bibitem[RR07b]{MR2278210}
\bysame, \emph{On the spectra of some weighted rooted trees and applications},
  Linear Algebra Appl. \textbf{420} (2007), no.~2-3, 310--328. \MR{MR2278210
  (2007i:05118)}

\bibitem[RS05]{MR2140275}
Oscar Rojo and Ricardo Soto, \emph{The spectra of the adjacency matrix and
  {L}aplacian matrix for some balanced trees}, Linear Algebra Appl.
  \textbf{403} (2005), 97--117. \MR{MR2140275 (2006b:05081)}

\bibitem[RTV07]{rudas}
Anna Rudas, B{\'a}lint T{\'o}th, and Benedek Valk{\'o}, \emph{Random trees and
  general branching processes}, Random Structures Algorithms \textbf{31}
  (2007), no.~2, 186--202. \MR{MR2343718}

\bibitem[Sch73]{MR0384582}
Allen~J. Schwenk, \emph{Almost all trees are cospectral}, New directions in the
  theory of graphs ({P}roc. {T}hird {A}nn {A}rbor {C}onf., {U}niv. {M}ichigan,
  {A}nn {A}rbor, {M}ich., 1971), Academic Press, New York, 1973, pp.~275--307.
  \MR{MR0384582 (52 \#5456)}

\bibitem[SM94]{smythe-mahmoud}
Robert~T. Smythe and Hosam~M. Mahmoud, \emph{A survey of recursive trees},
  Teor. \u Imov\=\i r. Mat. Stat. (1994), no.~51, 1--29. \MR{MR1445048
  (97k:60027)}

\bibitem[ST08]{scarabotti08}
Fabio Scarabotti and Filippo Tolli, \emph{Harmonic analysis of finite
  lamplighter random walks}, J. Dyn. Control Syst. \textbf{14} (2008), no.~2,
  251--282. \MR{2390215 (2009e:43009)}

\bibitem[TV04]{TulinoVerdu04}
M.~Tulino and S.~Verdu, \emph{Random matrix theory and wireless
  communications}, Foundations and Trends in Communications and Information
  Theory, Now Publishers, 2004.

\bibitem[Zol01]{Zol01}
V.M. Zolotarev, \emph{L\'evy metric}, Encyclopaedia of Mathematics (Michiel
  Hazewinkel, ed.), Kluwer Academic Publishers, 2001.

\end{thebibliography}
\end{document}